\newtheorem{thm}{Theorem}[section]
\newtheorem{cor}[thm]{Corollary}
\newtheorem{lm}[thm]{Lemma}
\newtheorem{clm}[thm]{Claim}
\theoremstyle{definition}
\newtheorem{df}[thm]{Definition}
\theoremstyle{remark}
\newtheorem*{rem}{Remarks}
\theoremstyle{plain}
\theoremstyle{definition}
\numberwithin{equation}{section}
\newcommand{\set}[2]{\{#1\,:\,\text{#2}\}} 
\newcommand{\m}[1]{{\mathbf{\uppercase{#1}}}}
\DeclareMathOperator{\Con}{Con}
\newcommand{\ra}{\rightarrow}
\newcommand{\tup}[1]{\mathbf{#1}}
\newcommand{\sfb}{\mathsf{b}}
\newcommand{\sfg}{\mathsf{g}}
\newcommand{\suc}{\operatorname{suc}}
\newcommand{\baru}{\overline{u}}
\newcommand{\bars}{\overline{s}}
\newcommand{\bart}{\overline{t}}
\newcommand{\Mast}[2]{\Delta(#1,#2)}   
\newcommand{\varV}{\mathcal V}                
\newcommand{\stackalpha}{\stackrel{\alpha}{\equiv}}
\newcommand{\stackbeta}{\stackrel{\beta}{\equiv}}
\newcommand{\stacktheta}{\stackrel{\theta}{\equiv}}
\newcommand{\ol}[1]{\overline{#1}}
\newcommand{\bmat}[4]{\begin{bmatrix}#1&#3\\#2&#4\end{bmatrix}}
\def\greencol{blue}
\begin{document}
\bibliographystyle{siam}

\title{Characterizing the commutator in varieties with a difference term}
\author{Keith A. Kearnes}
\address{Department of Mathematics\\
University of Colorado\\
Boulder, Colorado  80309-0395\\
USA}
\email{kearnes@colorado.edu}

\author{\'{A}gnes Szendrei}
\address{Department of Mathematics\\
University of Colorado\\
Boulder, Colorado 80309-0395\\
USA}
\email{szendrei@colorado.edu}

\author{Ross Willard}
\address{Pure Mathematics Department\\
University of Waterloo\\
Waterloo, Ontario N2L 3G1\\
Canada}
\email{ross.willard@uwaterloo.ca}

\thanks{This material is based upon work supported by the National Research, Development and Innovation Fund of Hungary (NKFI) grant no. K128042, and a
Natural Sciences and Engineering Research Council (NSERC) of Canada Discovery 
Grant.}

\keywords{difference term, Kiss term, variety, commutator}

\subjclass[2010]{Primary: 08B05.  Secondary: 08A05, 08A30.}

\date{February 15, 2022}

\begin{abstract}
We extend the validity of Kiss's characterization of
``$[\alpha,\beta]=0$''
from congruence modular varieties to varieties with a difference 
term.  This fixes a recently discovered gap in our paper \cite{ksw2016}.
We also prove some related properties of Kiss terms in varieties with a
difference term.
\end{abstract}

\maketitle

\section{Introduction}
In \cite{gumm1980},
H.~Peter~Gumm characterized the commutator relation ``$[\alpha,\beta]=0$''
when $\alpha$ and $\beta$ are comparable congruences ($\alpha\leq \beta$)
on an algebra in a congruence modular variety.
His characterization involved the concept of a
$3$-ary difference term (or ``Gumm term''). 
A $3$-ary difference term for a variety $\varV$
is a $3$-ary term $p(x,y,z)$ such that 
\begin{enumerate}
\item[(I)${}_p$]  $p(x,x,y)\approx y$ holds in every
  member of $\varV$, while
\item[(II)${}_p$]  $p(a,b,b)\stackrel{[\theta,\theta]}{\equiv} a$ whenever
  $(a,b) \in \theta \in \Con(\m a)$ for some $\m a\in \varV$.
  \end{enumerate}
Gumm showed that (i) every congruence modular variety
has a $3$-ary difference term, while
it is easy to verify
(using \cite[Corollary 4.7]{ks1998})
that (ii) $p(x,y,z)=z$
is a $3$-ary difference term for 
any congruence meet-semidistribu\-tive variety.
The class of all varieties having a difference term is a natural class to 
study: it is intermediate between the class of congruence modular varieties
and the class of Taylor varieties, it is definable by a (linear)
Maltsev condition \cite{lip1996,ks1998,ksw2016},
and the commutator operation is fairly well-behaved in this class
\cite{lip1994,kea1995,ks1998} --
in particular, the commutator operation is commutative 
in varieties with a difference term \cite[Lemma 2.2]{kea1995}.

In \cite[Definition 3.3]{kiss1992},
Emil Kiss introduced the concept of a
$4$-ary difference term
for congruence modular varieties. He used this concept to
characterize the relation ``$[\alpha,\beta]=0$''
when $\alpha$ and $\beta$ are not-necessarily-comparable congruences
on an algebra in a congruence modular variety.
The definition of a $4$-ary difference term refers
to a $4$-ary relation $R(\alpha,\beta)$ defined
from $\alpha, \beta\in\Con(\m a)$, $\m a\in \varV$ as follows:
\[
R(\alpha,\beta) = \set{(a,b,c,d) \in A^4}{$
(a,b),(c,d) \in \alpha$ 
and $(a,c),(b,d) \in \beta$}.
\]
$R(\alpha,\beta)$ is a subalgebra of $\m a^4$.
A $4$-ary difference term (or ``Kiss term'')
for a variety $\varV$
is a $4$-ary term $q(x,y,z,w)$ such that 
\begin{enumerate}
\item[(I)${}_q$]  $q(x,x,y,y)\approx y$ and $q(x,y,x,y)\approx x$
  hold in every
  member of $\varV$, while
\item[(II)${}_q$]  $q(a,b,c,d) \stackrel{[\alpha,\beta]}{\equiv} q(a,b,c',d)$
  holds whenever $\alpha, \beta\in \Con(\m a)$,
  $\m a\in \varV$, and
  $(a,b,c,d), (a,b,c',d)\in R(\alpha,\beta)$.
  \end{enumerate}
Kiss shows in his paper that (i) every congruence modular variety
has a $4$-ary difference term, while 
it is easy to verify
that (ii) $q(x,y,z,w)=z$ is a $4$-ary difference term for 
any congruence meet-semidistributive variety. 

At the top of page 467 of \cite{kiss1992},
Kiss states that if $q(x,y,z,w)$ is a $4$-ary
difference term for a congruence modular
variety $\varV$, then $p(x,y,z):=q(x,y,z,z)$
will be a $3$-ary difference term for $\varV$.
We give the argument for this in the next paragraph
in order to illustrate that this statement does
not require
$\varV$ to be congruence modular.

Assume that $q(x,y,z,w)$ is a $4$-ary difference term
for $\varV$ and define $p(x,y,z)=q(x,y,z,z)$.
From the identity
$q(x,x,y,y) \approx y$, which is part of (I)${}_q$, we derive that
$p(x,x,y)=q(x,x,y,y) \approx y$ holds in every
member of $\varV$, which is the claim of (I)${}_p$.
That is, (I)${}_q$ suffices to prove (I)${}_p$
if $p(x,y,z)=q(x,y,z,z)$.
Now assume that $\m a\in \varV$ and
$(a,b) \in \theta \in \Con(\m a)$.
Let $\alpha = \beta=\theta$.
Then $(A,B,C,D):=(a,b,b,b)$
and $(A,B,C',D):=(a,b,a,b)$
both belong to $R(\alpha,\beta)=R(\theta,\theta)$.
From the defining properties (I)${}_q$ and
(II)${}_q$ of $q$ we get
\[
p(a,b,b)=
q(a,b,b,b)=q(A,B,C,D) \stackrel{[\theta,\theta]}{\equiv} q(A,B,C',D)=
q(a,b,a,b) = a.
\]
This is the claim of (II)${}_p$.
That is, (I)${}_q$ and (II)${}_q$ 
suffice to prove (II)${}_p$
if $p(x,y,z)=q(x,y,z,z)$.

In Problem~3.11 of his paper, Kiss asks whether, conversely,
there is a reasonable way to
construct a $4$-ary difference term from a $3$-ary difference
term.
Paolo Lipparini solved this problem in \cite{lip1999},
by showing that, for any variety,
\begin{equation*}\label{LipForm}
  q(x,y,z,w) := p(p(x,z,z),p(y,w,z),z)
\tag{L}
\end{equation*}
is a $4$-ary difference term
whenever $p(x,y,z)$ is a $3$-ary difference term.
This shows that a variety has a $3$-ary difference term
if and only if it has a $4$-ary difference term,
and we will refer to such a variety as a
``variety with a difference term''.
We will reserve the
phrase ``difference term'' for a $3$-ary difference term $p$
and use the phrase ``Kiss term'' for a $4$-ary difference term $q$.

Our goal in this paper is to show that
Kiss's characterization of 
``$[\alpha,\beta]=0$'' holds for any variety with a
difference term, whether the variety is congruence modular or not.

In fact, we already claimed to have done this in
\cite[Lemma 6.2]{ksw2016}. But recently,
Ralph Freese and Peter Mayr
discovered that the proof in \cite{ksw2016}
of Lemma~6.2 has a gap.
Lemma~6.2 of \cite{ksw2016} states (under the assumption
that $\varV$ has a difference term and $q$ is the Kiss term obtained
from it via Lipparini's Formula \eqref{LipForm}):

\medskip\noindent
\textbf{Lemma 6.2 of \cite{ksw2016}.} 
\emph{If $\m a \in \varV$ and $\alpha,\beta \in \Con(\m a)$, then
$[\alpha,\beta]=0$ iff 
\begin{enumerate}
\item[(i)]
$q\colon R(\alpha,\beta)\ra \m a$ is a homomorphism, and
\item[(ii)]
$q$ is independent of its third variable on $R(\alpha,\beta)$.
\end{enumerate}
}

The incomplete part of the proof of Lemma~6.2 in \cite{ksw2016} 
concerns the implication
\begin{center}
$[\alpha,\beta]=0$ \; implies\; Item~(i).
\end{center}
In this paper we will fill the gap with

\begin{thm} \label{thm:main}
If $\varV$ is a variety with difference term $p$,
$q(x,y,z,w)$ is the Kiss term obtained from $p$ via
{\rm (\ref{LipForm})}, 
$\m a \in \varV$, $\alpha,\beta \in \Con(\m a)$, and $[\alpha,\beta]=0$,
then $q\colon R(\alpha,\beta)\ra \m a$ is a homomorphism.
\end{thm}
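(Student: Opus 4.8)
The plan is to reduce the statement to the assertion that $q$ commutes, on $R(\alpha,\beta)$, with each basic operation of $\m a$, and then to extract the required equalities from the hypothesis $[\alpha,\beta]=0$ via the term condition. Since $R(\alpha,\beta)$ is a subalgebra of $\m a^4$ whose operations act coordinatewise, the map $q\colon R(\alpha,\beta)\ra\m a$ is a homomorphism precisely when, for every basic operation $f$ of $\m a$, say $n$-ary, and all $(a_i,b_i,c_i,d_i)\in R(\alpha,\beta)$ ($1\le i\le n$),
\begin{equation*}
  q\bigl(f(\ol a),f(\ol b),f(\ol c),f(\ol d)\bigr)=f\bigl(q(a_1,b_1,c_1,d_1),\dots,q(a_n,b_n,c_n,d_n)\bigr),\tag{$\ast$}
\end{equation*}
where $\ol a=(a_1,\dots,a_n)$ and $\ol b,\ol c,\ol d$ are formed likewise. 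So I would fix such an $f$ and such tuples, put $G:=q(f(\ol a),f(\ol b),f(\ol c),f(\ol d))$ and $G':=f(q(a_1,b_1,c_1,d_1),\dots,q(a_n,b_n,c_n,d_n))$, and aim to prove $G=G'$.

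The first step is to show that $(G,G')\in\alpha\wedge\beta$. Because $f$ is a term operation, $\bigl(f(\ol a),f(\ol b),f(\ol c),f(\ol d)\bigr)$ again lies in $R(\alpha,\beta)$. Performing inside $q$ the $\alpha$-related replacements $f(\ol b)\mapsto f(\ol a)$ and $f(\ol d)\mapsto f(\ol c)$ (which $q$, being a term, respects) and using $q(x,x,y,y)\approx y$ from (I)${}_q$ gives $G\sta f(\ol c)$; performing the $\beta$-related replacements $f(\ol c)\mapsto f(\ol a)$ and $f(\ol d)\mapsto f(\ol b)$ and using $q(x,y,x,y)\approx x$ gives $G\stb f(\ol a)$. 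The same two manipulations performed coordinatewise inside each $q(a_i,b_i,c_i,d_i)$, followed by a single application of $f$, give $G'\sta f(\ol c)$ and $G'\stb f(\ol a)$. Hence $G\sta G'$ and $G\stb G'$, so $(G,G')\in\alpha\wedge\beta$. Writing $\theta:=\alpha\wedge\beta$ and using monotonicity of the commutator, $[\theta,\theta]\le[\alpha,\beta]=0$; thus $\theta$ is abelian, $p$ restricts to a Maltsev operation on each $\theta$-block, and (II)${}_p$ yields $p(G,G',G')=G$.

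It remains to improve $(G,G')\in\theta$ to $G=G'$, and this I expect to be the crux — indeed, it is exactly the point at which the argument in \cite{ksw2016} broke down. The tools to bring to bear are the term condition $C(\alpha,\beta;0)$, equivalent to $[\alpha,\beta]=0$, together with its companion $C(\beta,\alpha;0)$, which holds because the commutator is commutative in a variety with a difference term \cite[Lemma~2.2]{kea1995}. Regarding $G$ and $G'$ as values of $(4n)$-ary terms at $(\ol a,\ol b,\ol c,\ol d)$, the passage from $(\ol a,\ol a,\ol c,\ol c)$ to $(\ol a,\ol b,\ol c,\ol d)$ is an $\alpha$-step and the passage from $(\ol a,\ol b,\ol a,\ol b)$ to $(\ol a,\ol b,\ol c,\ol d)$ is a $\beta$-step, and at each of the three ``corners'' other than $(\ol a,\ol b,\ol c,\ol d)$ the two terms agree, reducing by (I)${}_q$ to $f(\ol c)$ or $f(\ol a)$. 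One would like to transport that agreement to the fourth corner by a rectangular argument.

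The difficulty — and the precise location of the gap — is that the naive rectangular transport is false: $R(\alpha,\beta)$ is strictly larger than the subalgebra generated by the three degenerate corners, the discrepancy lying exactly in $\theta$, so a term operation can agree with another on those corners and still differ elsewhere on $R(\alpha,\beta)$. What should close this, I believe, is to feed in (a) the rectangular consequences of $[\alpha,\beta]=0$, (b) the abelianness of $\theta$ from the previous step, and (c) the already-established fact (item~(ii) of Lemma~6.2 of \cite{ksw2016}, whose proof there is sound) that $q$ is independent of its third variable on $R(\alpha,\beta)$ — this last being the genuine content of $q(x,y,x,y)\approx x$ and what forces the would-be discrepancy into $\theta$ and then, by its abelianness, to $0$. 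Concretely, one can exploit that $G$ and $G'$ both lie in the $\theta$-block of $f(\ol c)$ to replace the third argument $f(\ol c)$ of $q$ inside $G$ by $G'$ (this is a legitimate move by the independence property), and then work in the affine module structure on that block, where the two expressions should be visibly equal. Carrying this interaction out cleanly — presumably by isolating an auxiliary identity or a suitable congruence on $R(\alpha,\beta)$ — is the part I anticipate requiring the most care.
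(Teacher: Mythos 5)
Your setup is sound, and the first stage is correct and matches one of the ingredients in the paper's assembly argument: putting $A=f(\ol a)$, $B=f(\ol b)$, $C=f(\ol c)$, $D=f(\ol d)$, you correctly show $(G,G')\in\alpha\cap\beta=\theta$, that $\theta$ is abelian, and — via the verified independence of $q$ in its third variable on $R(\alpha,\beta)$, which is Lemma~\ref{lm:indep} and is indeed sound — that $G=q(A,B,G',D)$ is a legitimate rewrite. But the argument stops there. You acknowledge explicitly that the remaining step, upgrading $(G,G')\in\theta$ to $G=G'$, is still to be ``carried out cleanly,'' and the hints you offer (the term conditions $C(\alpha,\beta;0)$ and $C(\beta,\alpha;0)$, the rectangular corner-transport, the affine module structure on the $\theta$-block) do not assemble into a proof. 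Note in particular that $A,B,D$ need not lie in the $\theta$-block of $C$, so $q(A,B,-,D)$ is not a polynomial of the affine module on that block and the phrase ``visibly equal'' has no purchase; and the rectangular transport you describe is precisely the false step whose failure you correctly diagnose, without a replacement.

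This is a genuine gap, and it is the entire content of the theorem. The paper's route is structurally different and substantially heavier: it introduces $\Mast\alpha\beta$, the horizontal-and-vertical transitive closure of $M(\alpha,\beta)$, proves that it is a subalgebra (Lemma~\ref{lm:subalg}) and that, under $[\alpha,\beta]=0$, the top-right entry of a matrix in $\Mast\alpha\beta$ is determined by the other three (Lemma~\ref{lm:C2*}), and then proves the crucial Lemma~\ref{lm:crucial}: for every $\begin{bmatrix}a&c\\b&d\end{bmatrix}\in R(\alpha,\beta)$, the matrix $\begin{bmatrix}a&q(a,b,c,d)\\b&d\end{bmatrix}$ lies in $\Mast\alpha\beta$. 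Theorem~\ref{thm:main} then follows by a short assembly (Section~\ref{section:assembly}): apply Lemma~\ref{lm:crucial} to the $n$ input matrices and to the $s$-combined matrix, use the subalgebra property to form $s(c_1',\dots,c_n')$ inside $\Mast\alpha\beta$, and invoke the uniqueness of the top-right entry. The proof of Lemma~\ref{lm:crucial} itself is the hard part; it spans Sections~\ref{sec:reduction}--\ref{sec:laststep} and uses the explicit Maltsev condition (the tree-indexed family $f_i,g_i$ and identities \eqref{ax1}--\eqref{ax12}) for varieties with a difference term, together with elaborate gluing diagrams. Your sketch contains no analogue of Lemma~\ref{lm:crucial} or any substitute for the $\Mast\alpha\beta$ machinery, and without one the step from $(G,G')\in\theta$ to $G=G'$ does not go through.
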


Then in Section~\ref{sec:Lm6.2} we will restate Lemma~6.2 of \cite{ksw2016}
and prove it.

Although Lemma~6.2 of \cite{ksw2016} only makes a claim
about the Kiss term $q$ obtained from the
difference term $p$ by Lipparini's Formula \eqref{LipForm},
we shall prove in Section~\ref{section:refinements}
that the statement of Lemma~6.2 holds
for any Kiss term in any variety that has a Kiss term.
Section~\ref{section:refinements} contains other refinements and
extensions of results in earlier sections.

\section{High-level summary of the proof of Theorem~\ref{thm:main}} \label{sec:summary}

We represent a 4-tuple $(a,b,c,d)\in A^4$ in matrix form as
\[
\begin{bmatrix}
a&c\\b&d\end{bmatrix}.
\]
Thus a matrix in $R(\alpha,\beta)$ has its columns in $\alpha$ and
its rows in $\beta$.
To emphasize this representation, we shall henceforth rename
$\m a^4$
as $\m a^{2\times 2}$.
As usual, we define $M(\alpha,\beta)$ to be the subalgebra of 
$\m a^{2\times 2}$ generated by 
\begin{equation} \label{eq:G}
G(\alpha,\beta) :=
\left\{ \begin{bmatrix} c&c\\d&d\end{bmatrix}
: (c,d) \in \alpha\right\}
\cup
\left\{
\begin{bmatrix} a&c\\a&c\end{bmatrix}
: (a,c) \in \beta\right\}.
\end{equation}

In the congruence modular setting, the ``horizontal transitive closure''
of $M(\alpha,\beta)$, denoted $\Delta_{\alpha,\beta}$, plays a key role.
In that setting it turns out that $\Delta_{\alpha,\beta}$ is also 
``vertically transitively closed," and hence $\Delta_{\beta,\alpha}$ is
equal to the set of transposes of the matrices in $\Delta_{\alpha,\beta}$.
These facts can fail outside the congruence modular setting.  
Andrew Moorhead \cite{moor2021} recently identified and studied the 
``horizontal and vertical
transitive closure" of $M(\alpha,\beta)$ in a general setting.  This
4-ary relation is an example of a 
construct that Moorhead calls a ``2-dimensional congruence," and 
George Janelidze and M. Cristina Pedicchio \cite{janped2001} previously called 
a ``double equivalence relation."
Moorhead's ``horizontal and vertical transitive closure" of $M(\alpha,\beta)$
will play a crucial role in our arguments.

\begin{df}[{\cite[Definition 2.10 and Lemma 2.13]{moor2021}}]\label{df:delta}
Let $\m a$ be an algebra and $\alpha,\beta \in \Con(\m a)$.
$\Mast\alpha\beta$ denotes the ``horizontal and vertical'' transitive
closure of $M(\alpha,\beta)$.  That is, $\Mast\alpha\beta$ is the 
smallest subset of $R(\alpha,\beta)$ containing $M(\alpha,\beta)$ and satisfying
the following two closure conditions:
\begin{enumerate}
\item (Horizontal gluing)
If $\begin{bmatrix}a&a'\\b&b'\end{bmatrix}, 
\begin{bmatrix}a'&a''\\b'&b''\end{bmatrix} \in \Mast\alpha\beta$,
then 
$\begin{bmatrix}a&a''\\b&b''\end{bmatrix} \in \Mast\alpha\beta$.\\[1mm]
\item (Vertical gluing)
If $\begin{bmatrix}a&a'\\b&b'\end{bmatrix}, 
\begin{bmatrix}b&b'\\c&c'\end{bmatrix} \in \Mast\alpha\beta$,
then 
$\begin{bmatrix}a&a'\\c&c'\end{bmatrix} \in \Mast\alpha\beta$.
\end{enumerate}
\end{df}

Here are the facts about $\Mast\alpha\beta$ that we need.

\begin{lm}[{\cite[Lemma 2.9]{moor2021}}] \label{lm:subalg}
For any algebra $\m a$ and congruences $\alpha,\beta \in \Con(\m a)$,
$\Mast\alpha\beta$ is a subalgebra of $\m a^{2\times 2}$.
\end{lm}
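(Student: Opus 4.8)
The plan is to realize $\Mast\alpha\beta$ as the union of an increasing $\omega$-chain of subalgebras of $\m a^{2\times 2}$ and then invoke the standard fact that an increasing union of subalgebras is a subalgebra. For a subset $S\subseteq R(\alpha,\beta)$, let $H(S)$ denote $S$ together with every matrix obtained by horizontally gluing a pair of matrices from $S$ in the sense of the horizontal-gluing clause of Definition~\ref{df:delta}, and let $V(S)$ be defined analogously using the vertical-gluing clause. Both $H$ and $V$ are extensive ($S\subseteq H(S)$, $S\subseteq V(S)$) and monotone, and a direct check using the transitivity of $\alpha$ and $\beta$ shows that $H$ and $V$ each map subsets of $R(\alpha,\beta)$ to subsets of $R(\alpha,\beta)$; in particular $R(\alpha,\beta)$ itself is closed under both kinds of gluing. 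Put $S_0:=M(\alpha,\beta)$ and $S_{n+1}:=V(H(S_n))$. Then $S_0\subseteq S_1\subseteq\cdots$; each $S_n\subseteq R(\alpha,\beta)$ by induction; $\bigcup_nS_n$ is closed under both horizontal and vertical gluing, since any gluable pair of its members lies in a common $S_N$ and the resulting matrix then lies in $S_{N+1}$; and each $S_n$ is contained in every gluing-closed subset of $R(\alpha,\beta)$ that contains $M(\alpha,\beta)$. Hence $\bigcup_nS_n=\Mast\alpha\beta$, and it suffices to show, by induction on $n$ with base case ``$M(\alpha,\beta)$ is a subalgebra,'' that every $S_n$ is a subalgebra.

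For the induction step it is enough to prove the following. \textbf{Claim.} \emph{If $S$ is a subalgebra of $\m a^{2\times 2}$ with $M(\alpha,\beta)\subseteq S\subseteq R(\alpha,\beta)$, then $H(S)$ and $V(S)$ are subalgebras.} I would prove this for $H(S)$; the argument for $V(S)$ is symmetric (vertical gluing, with $\beta$ in place of $\alpha$). The crucial observation is that \emph{every matrix in $H(S)$ is a horizontal gluing of two matrices from $S$}: for a genuine gluing this is immediate, and for a matrix $\bmat{a}{b}{c}{d}\in S$ it holds because $(a,b)\in\alpha$ (as $\bmat{a}{b}{c}{d}\in R(\alpha,\beta)$), so $\bmat{a}{b}{a}{b}\in G(\alpha,\beta)\subseteq M(\alpha,\beta)\subseteq S$ and $\bmat{a}{b}{c}{d}$ is the horizontal gluing of $\bmat{a}{b}{a}{b}$ with $\bmat{a}{b}{c}{d}$. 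Now let $f$ be a $k$-ary basic operation and $m_1,\dots,m_k\in H(S)$; write each $m_i$ as the horizontal gluing of $\ell_i=\bmat{a_i}{b_i}{s_i}{t_i}\in S$ and $r_i=\bmat{s_i}{t_i}{c_i}{d_i}\in S$. Writing $\ol a=(a_1,\dots,a_k)$ and similarly for the other letters, $f(\ell_1,\dots,\ell_k)=\bmat{f(\ol a)}{f(\ol b)}{f(\ol s)}{f(\ol t)}$ and $f(r_1,\dots,r_k)=\bmat{f(\ol s)}{f(\ol t)}{f(\ol c)}{f(\ol d)}$ both lie in $S$, they share the column $\bigl(f(\ol s),f(\ol t)\bigr)$, and their horizontal gluing is $\bmat{f(\ol a)}{f(\ol b)}{f(\ol c)}{f(\ol d)}=f(m_1,\dots,m_k)$; hence $f(m_1,\dots,m_k)\in H(S)$, which proves the Claim.

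The one genuinely non-routine ingredient is the Claim, and within its proof the device of presenting an arbitrary member of $S$ as a trivial gluing along one of the diagonal generators $\bmat{a}{b}{a}{b}$ or $\bmat{a}{a}{c}{c}$ in $G(\alpha,\beta)$: this is exactly what lets a term operation ``pass through'' a gluing, and hence is the mechanism by which subalgebra-hood survives the closure process. Everything else --- closure of $R(\alpha,\beta)$ and of the $S_n$ under gluing, the identification $\bigcup_nS_n=\Mast\alpha\beta$, and the transfer of subalgebra-hood to the increasing union --- is routine bookkeeping, the only real care being to keep rows and columns straight in the gluing computations. I do not anticipate any serious obstacle beyond that.
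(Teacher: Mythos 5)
Your proof is correct and takes essentially the same approach as the paper: build $\Mast\alpha\beta$ as the increasing union of a chain of subalgebras obtained by iterating one step of gluing closure starting from $M(\alpha,\beta)$, and transfer subalgebra-hood through the union. The only differences are cosmetic---the paper alternates single horizontal and vertical steps ($M_n$) where you do both per step ($S_{n+1}=V(H(S_n))$), and you spell out the device (presenting any member of $S$ as a trivial gluing along a generator of $G(\alpha,\beta)$) that the paper's ``one can easily prove by induction'' leaves implicit.
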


\begin{lm} \label{lm:transpose}
For any algebra $\m a$ and congruences $\alpha,\beta \in \Con(\m a)$,
$\Mast\beta\alpha$ is the set of transposes of matrices in $\Mast\alpha\beta$.
\end{lm}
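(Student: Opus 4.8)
The plan is to exploit the symmetry between the horizontal and vertical directions under transposition. For a set $S$ of matrices write $S^T$ for the set of transposes of the matrices in $S$. The transposition map $\begin{bmatrix}a&c\\b&d\end{bmatrix}\mapsto\begin{bmatrix}a&b\\c&d\end{bmatrix}$ is the coordinate permutation of $\m a^{2\times 2}$ interchanging the second and third coordinates, hence an automorphism of the algebra $\m a^{2\times 2}$; in particular it maps subalgebras to subalgebras and satisfies $\sg(S)^T=\sg(S^T)$ for every $S\subseteq A^{2\times 2}$. It also maps $R(\alpha,\beta)$ onto $R(\beta,\alpha)$, since interchanging rows and columns interchanges the two membership conditions defining $R$.

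First I would check that transposition carries the generating set $G(\alpha,\beta)$ of \eqref{eq:G} exactly onto $G(\beta,\alpha)$. The transpose of a generator $\begin{bmatrix}c&c\\d&d\end{bmatrix}$ with $(c,d)\in\alpha$ is $\begin{bmatrix}c&d\\c&d\end{bmatrix}$, which is a generator of the second type for $G(\beta,\alpha)$ (constant rows, with the common row-entry pair in $\alpha$); dually, the transpose of a generator $\begin{bmatrix}a&c\\a&c\end{bmatrix}$ with $(a,c)\in\beta$ is $\begin{bmatrix}a&a\\c&c\end{bmatrix}$, a generator of the first type for $G(\beta,\alpha)$. Thus $G(\alpha,\beta)^T=G(\beta,\alpha)$, and applying $\sg$ gives $M(\alpha,\beta)^T=M(\beta,\alpha)$.

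Next I would observe that transposition interchanges the two closure conditions of Definition~\ref{df:delta}: transposing the three matrices in a horizontal gluing of $\begin{bmatrix}a&a'\\b&b'\end{bmatrix}$ and $\begin{bmatrix}a'&a''\\b'&b''\end{bmatrix}$ to $\begin{bmatrix}a&a''\\b&b''\end{bmatrix}$ produces the matrices $\begin{bmatrix}a&b\\a'&b'\end{bmatrix}$, $\begin{bmatrix}a'&b'\\a''&b''\end{bmatrix}$, $\begin{bmatrix}a&b\\a''&b''\end{bmatrix}$, which is precisely a vertical gluing, and conversely. Consequently, since $\Mast\alpha\beta$ contains $M(\alpha,\beta)$ and is closed under both gluings, the set $\Mast\alpha\beta^T$ is a subset of $R(\beta,\alpha)$ that contains $M(\beta,\alpha)$ and is closed under both horizontal and vertical gluing. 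By the minimality clause of Definition~\ref{df:delta}, $\Mast\beta\alpha\subseteq\Mast\alpha\beta^T$. Running the same argument with $\alpha$ and $\beta$ interchanged gives $\Mast\alpha\beta\subseteq\Mast\beta\alpha^T$, and transposing both sides (using that transposition is an involution) yields the reverse inclusion $\Mast\alpha\beta^T\subseteq\Mast\beta\alpha$. Hence the two sets coincide.

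I do not anticipate a real obstacle; the proof is a formal consequence of the minimality characterization of $\Delta$ once the two bookkeeping points are nailed down: that the two families of generators in $G(\alpha,\beta)$ swap correctly onto the corresponding families of $G(\beta,\alpha)$ (with the right congruence attached), and that transposition is an algebra automorphism of $\m a^{2\times 2}$, so that it commutes with $\sg$ and with the subalgebra property. The mild subtlety worth stating explicitly is just that transposition converts horizontal gluing into vertical gluing and back, which is what makes the two closure conditions symmetric.
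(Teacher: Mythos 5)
Your proof is correct and follows essentially the same approach as the paper's (which is stated tersely as ``follows easily from the symmetry between horizontal and vertical transitive closures\dots and the fact that the set of generators for $M(\beta,\alpha)$ is the set of transposes of the generators for $M(\alpha,\beta)$''). You have simply spelled out the two bookkeeping points the paper leaves to the reader.
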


\begin{lm}  \label{lm:C2*}
Suppose $\m a$ 
is an algebra in a variety with a difference term
and $\alpha,\beta \in\Con(\m a)$ with $[\alpha,\beta]=0$.
\begin{enumerate}
\item
If
$\begin{bmatrix}a&c\\b&d\end{bmatrix} \in \Mast\alpha\beta$, then
$a=c$ iff $b=d$.\\[1mm]
\item
If 
$\begin{bmatrix}a&c\\b&d\end{bmatrix},
\begin{bmatrix}a&c'\\b&d\end{bmatrix} \in \Mast\alpha\beta$,
then $c=c'$.
\end{enumerate}
\end{lm}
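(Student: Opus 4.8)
The plan is to prove part~(1) first and to deduce part~(2) from it. Two easy symmetry observations will be used throughout. First, $\Mast\alpha\beta$ is invariant under swapping the two rows of a matrix and under swapping the two columns: this is immediate from Definition~\ref{df:delta}, since each of these operations preserves $R(\alpha,\beta)$, preserves $G(\alpha,\beta)$, and is compatible with the two gluing rules. Second, since $[\alpha,\beta]=0$ implies $[\beta,\alpha]=0$ (commutativity of the commutator in varieties with a difference term), whatever we prove for $\Mast\alpha\beta$ applies equally to $\Mast\beta\alpha$, and by Lemma~\ref{lm:transpose} a statement about $\Mast\beta\alpha$ transfers back to a transposed statement about $\Mast\alpha\beta$. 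In particular, for part~(1) it suffices to prove the single implication ``$a=c\Rightarrow b=d$'' for every matrix in $\Mast\alpha\beta$: the reverse implication then follows by swapping rows, and the column versions $a=b\Leftrightarrow c=d$ follow by transposing.

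For matrices in $M(\alpha,\beta)$ the implication ``$a=c\Rightarrow b=d$'' is exactly the term condition $C(\alpha,\beta;0)$, which holds because $[\alpha,\beta]=0$: a generic member of $M(\alpha,\beta)$ has the form $\bigl[\begin{smallmatrix}t(\mathbf{u},\mathbf{e})&t(\mathbf{u},\mathbf{f})\\ t(\mathbf{v},\mathbf{e})&t(\mathbf{v},\mathbf{f})\end{smallmatrix}\bigr]$ with $\mathbf{u}\mathrel{\alpha}\mathbf{v}$ and $\mathbf{e}\mathrel{\beta}\mathbf{f}$, and $C(\alpha,\beta;0)$ says precisely that a constant top row forces a constant bottom row. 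To push this from $M(\alpha,\beta)$ to all of $\Mast\alpha\beta$ I would induct along the construction of $\Mast\alpha\beta$ from $M(\alpha,\beta)$ by successive gluings. The obstacle is that a naive induction breaks at the inductive step: if a new matrix $R$ arises by horizontally gluing $P$ and $Q$, knowing the collapse statements for $P$ and $Q$ tells us nothing about the middle column that disappears in the gluing. The fix I envisage is to prove part~(1) simultaneously with a strengthening that is essentially part~(2) — a ``cancellation'' statement asserting that any three entries of a matrix in $\Mast\alpha\beta$ determine the fourth. With the cancellation statement available for the two pieces $P$ and $Q$, one column-swaps $Q$, uses the hypothesis that $R$ has constant top row to see that the resulting matrix and $P$ agree in three of their four entries, and then reads off the needed collapse from cancellation; vertical gluings are treated symmetrically after transposing, and part~(2) for freshly glued matrices is handled by the maneuver in the last paragraph.

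This reduces everything to the base case of the cancellation statement, i.e.\ part~(2) restricted to $M(\alpha,\beta)$: if $\bigl[\begin{smallmatrix}a&c\\ b&d\end{smallmatrix}\bigr],\bigl[\begin{smallmatrix}a&c'\\ b&d\end{smallmatrix}\bigr]\in M(\alpha,\beta)$ then $c=c'$. This is where the difference term $p$ is indispensable, and I expect it to be the technical heart of the argument. One writes the two matrices through terms as above, observes that $(c,c')\in\alpha\cap\beta=:\theta$ so that $[\theta,\theta]\le[\alpha,\beta]=0$, and then combines the two defining properties of the difference term — the identity $p(x,x,y)\approx y$ and the congruence $p(a,b,b)\stackrel{[\theta,\theta]}{\equiv}a$ for $(a,b)\in\theta$, which under $[\theta,\theta]=0$ becomes a genuine equation — together with the term condition $C(\alpha,\beta;0)$, the point of $p$ being to ``divide out'' the part of the two matrices that lives in $\theta$ and so force $c=c'$. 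This parallels, but is more delicate than, the corresponding computation in the congruence modular case.

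Finally, granting part~(1) for $\Mast\gamma\delta$ whenever $[\gamma,\delta]=0$, part~(2) follows cheaply. Given $\bigl[\begin{smallmatrix}a&c\\ b&d\end{smallmatrix}\bigr],\bigl[\begin{smallmatrix}a&c'\\ b&d\end{smallmatrix}\bigr]\in\Mast\alpha\beta$, transpose both (Lemma~\ref{lm:transpose}) to obtain two matrices of $\Mast\beta\alpha$ sharing their top rows; column-swap the second and glue it horizontally onto the first, producing a matrix of $\Mast\beta\alpha$ whose top row is the constant pair $(a,a)$ and whose bottom row is $(c,c')$. Since $[\beta,\alpha]=0$, part~(1) applied in $\Mast\beta\alpha$ forces the bottom row to be constant as well, i.e.\ $c=c'$.
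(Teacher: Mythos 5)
Your architecture is the paper's: prove part~(1) by induction along the gluing hierarchy $M_0\subseteq M_1\subseteq\cdots$ whose union is $\Mast\alpha\beta$, then deduce part~(2) from the full statement of part~(1). Your final paragraph deducing~(2) from~(1) is correct (the paper does the same after a single column swap rather than a transpose), and the ingredients you identify — the term condition on $M_0=M(\alpha,\beta)$, closure of each $M_n$ under row/column swaps and under the operations of $\m a$, abelianness of $\theta=\alpha\cap\beta$, and the role of the difference term $p$ — are exactly the right ones.

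There is, however, a gap in the inductive step you sketch for the ``cancellation'' invariant. You propose to establish cancellation for a freshly glued matrix at level $n$ ``by the maneuver in the last paragraph,'' i.e., by column-swapping and then gluing. But that gluing produces a matrix roughly two levels higher in the hierarchy $M_0\subseteq M_1\subseteq\cdots$, so invoking part~(1) for it requires the inductive hypothesis at a level not yet reached; as stated, the simultaneous induction does not close. What does close — and is what the paper actually does — is to observe that the $p$-computation you describe for the base case of cancellation stays at the same level, so it can be inlined directly into the horizontal-gluing case of a plain induction on~(1). Concretely: in the odd case one has $\begin{bmatrix}r&a\\s&d\end{bmatrix}$ and the column-swap $\begin{bmatrix}r&a\\s&b\end{bmatrix}$ both in $M_{n-1}$, with $(b,d)\in\theta$; applying $p$ coordinatewise to these two together with the constant matrix $\begin{bmatrix}b&b\\b&b\end{bmatrix}$ (all three lie in the subalgebra $M_{n-1}$) yields $\begin{bmatrix}b&b\\b&d\end{bmatrix}\in M_{n-1}$, and the inductive hypothesis for~(1) at level $n-1$ then gives $b=d$. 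No separate cancellation invariant is needed, and the vertical-gluing case is handled by pure chaining of the inductive hypothesis with no use of $p$. A related mislocation in your sketch: the base case of~(1) on $M_0$ is just the term condition and does not need $p$ at all; the difference term enters in the (horizontal) inductive step, not at the base.
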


\begin{lm} \label{lm:crucial}
Let $\m a$ be an algebra in a variety having a
difference term $p(x,y,z)$ and let
$q(x,y,z,w)$ be the Kiss term obtained from $p$ via
\rm{(\ref{LipForm})}. 
If $\alpha, \beta\in\Con(\m a)$ with $[\alpha,\beta]=0$, then
for all $\begin{bmatrix}a&c\\b&d
\end{bmatrix} \in R(\alpha,\beta)$ 
we have $\begin{bmatrix} a&c'\\b&d\end{bmatrix} \in \Mast\alpha\beta$
where $c'=q(a,b,c,d)$.
\end{lm}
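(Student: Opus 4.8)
The plan is to build the matrix $\begin{bmatrix}a&c'\\b&d\end{bmatrix}$ explicitly inside $\Mast\alpha\beta$, starting from the generators in $G(\alpha,\beta)$ and using the operation $p$ together with the vertical gluing rule of Definition~\ref{df:delta}. First some bookkeeping: write $u=p(a,c,c)$ and $v=p(b,d,c)$, so that $c'=q(a,b,c,d)=p(u,v,c)$; using (I)${}_p$ and the relations defining $R(\alpha,\beta)$ one checks $(u,v)\in\alpha$ and $(u,c),(v,c)\in\beta$, whence $(c',c)\in\alpha$, $(c',c)\in\beta$, and $\begin{bmatrix}a&c'\\b&d\end{bmatrix}\in R(\alpha,\beta)$. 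I would then record the easy observation that, by (I)${}_q$, the term-operation $\Phi\colon\begin{bmatrix}a&c\\b&d\end{bmatrix}\mapsto\begin{bmatrix}a&q(a,b,c,d)\\b&d\end{bmatrix}$ fixes every member of $G(\alpha,\beta)$, and hence, being a composite of basic operations, fixes the subalgebra $M(\alpha,\beta)$ generated by $G(\alpha,\beta)$ pointwise; so $\Phi$ already maps $M(\alpha,\beta)$ into $\Mast\alpha\beta$, and the lemma is precisely the assertion that it does the same for all of $R(\alpha,\beta)$.

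The engine of the argument is the following computation: for any $(x,y)\in\alpha$ and $(y,z)\in\beta$ the matrices $\begin{bmatrix}x&x\\y&y\end{bmatrix}$, $\begin{bmatrix}y&y\\y&y\end{bmatrix}$, $\begin{bmatrix}y&z\\y&z\end{bmatrix}$ all lie in $G(\alpha,\beta)$, so applying $p$ and simplifying with (I)${}_p$ gives
\[
p\!\left(\begin{bmatrix}x&x\\y&y\end{bmatrix},\,\begin{bmatrix}y&y\\y&y\end{bmatrix},\,\begin{bmatrix}y&z\\y&z\end{bmatrix}\right)=\begin{bmatrix}p(x,y,y)&p(x,y,z)\\y&z\end{bmatrix}\in M(\alpha,\beta)\subseteq\Mast\alpha\beta .
\]
Taking $(x,y,z)=(a,b,d)$ puts $\begin{bmatrix}p(a,b,b)&p(a,b,d)\\b&d\end{bmatrix}$ in $\Mast\alpha\beta$. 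Since $p(a,b,b)\stackrel{[\alpha,\alpha]}{\equiv}a$ by (II)${}_p$, a single vertical gluing reduces the lemma for $(\m a,\alpha,\beta)$ to the lemma for $\m a$ with $(a,b)$ in the left column replaced by the thinner pair $(a,p(a,b,b))$ — in effect, to the case in which $\alpha$ is replaced by $[\alpha,\alpha]$, which is legitimate because $[[\alpha,\alpha],\beta]\le[\alpha,\beta]=0$ and $\Mast{[\alpha,\alpha]}\beta\subseteq\Mast\alpha\beta$. When $[\alpha,\alpha]=0$ the construction lands directly on a matrix of $M(\alpha,\beta)$ with left column $(a,b)$ and bottom row $(b,d)$; since $\Phi$ fixes $M(\alpha,\beta)$ and $q$ is independent of its third variable on $R(\alpha,\beta)$ (from (II)${}_q$ and $[\alpha,\beta]=0$), the top-right entry is forced to equal $q(a,b,c,d)$. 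Iterating this reduction down the derived series of $\alpha$ proves the lemma when $\alpha$ is solvable; Lemma~\ref{lm:transpose} supplies the transposed statement so that the symmetric reduction for $\beta$ is available, and Lemma~\ref{lm:C2*} is used repeatedly to pin down the intermediate top-right entries the gluings produce.

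The hard part is making this reduction terminate in general and keeping track of the fill-in. The difference-term axiom yields only $p(x,y,y)\stackrel{[\theta,\theta]}{\equiv}x$ rather than an honest equation, and $[\alpha,\beta]=0$ forces neither $[\alpha,\alpha]$ nor $[\beta,\beta]$ to be $0$; so one cannot obtain the target matrix by a naive substitution into the generators, and the ``error'' must instead be absorbed through a carefully arranged sequence of gluings in which $p$ is applied only across congruences with trivial self-commutator — ultimately $\alpha\cap\beta$, since $[\alpha\cap\beta,\alpha\cap\beta]\le[\alpha,\beta]=0$ — while every intermediate matrix is kept inside the relevant $R(\cdot,\cdot)$ so that the gluing rules of Definition~\ref{df:delta} apply at each seam. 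Maintaining this bookkeeping consistently, and verifying that the element produced at the end is exactly $q(a,b,c,d)$ and not merely the (unique, by Lemma~\ref{lm:C2*}) correct fill-in, is where the real work of the proof lies.
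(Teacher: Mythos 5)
Your preliminary observations are sound (the form of $c'$, the membership $\begin{bmatrix}a&c'\\b&d\end{bmatrix}\in R(\alpha,\beta)$, the fact that $\Phi$ fixes $G(\alpha,\beta)$ and hence $M(\alpha,\beta)$ pointwise, and the ``engine'' identity putting $\begin{bmatrix}p(x,y,y)&p(x,y,z)\\y&z\end{bmatrix}$ in $M(\alpha,\beta)$), but the proposal does not amount to a proof, and you acknowledge as much in your final paragraph. There are two separate problems. First, the reduction step is not closed: after the vertical gluing you have reduced to showing $\begin{bmatrix}a&c'\\p(a,b,b)&p(a,b,d)\end{bmatrix}\in\Mast\alpha\beta$, but to feed this into a ``lemma for $[\alpha,\alpha]$'' you would need to exhibit some $\tilde c$ with $\begin{bmatrix}a&\tilde c\\p(a,b,b)&p(a,b,d)\end{bmatrix}\in R([\alpha,\alpha],\beta)$ \emph{and} $q(a,p(a,b,b),\tilde c,p(a,b,d))=c'$. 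The first condition forces $(\tilde c,p(a,b,d))\in[\alpha,\alpha]$, which does not follow from what you have, and the second equality is a nontrivial identity relating $q$ evaluated on two different matrices; neither is established. Lemma~\ref{lm:C2*}(2) only gives uniqueness of the fill-in, not its identification with $q(a,b,c,d)$, so invoking it cannot close this gap without circularity. Second, even if the single step were repaired, iterating down the derived series of $\alpha$ produces a proof only when $\alpha$ is solvable, and nothing in the hypotheses forces $\alpha$ (or $\beta$) to be solvable; the observation that $\alpha\cap\beta$ is abelian is correct, but you give no mechanism for transporting the problem from $\alpha$ to $\alpha\cap\beta$.

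The missing ingredient is precisely what the paper supplies: the Maltsev condition characterizing difference terms, realized by the labeled tree and the family of pairs $(f_i,g_i)$ of idempotent ternary terms satisfying identities \eqref{ax1}--\eqref{ax12}. The paper first reduces (Lemma~\ref{lm:agi}) to showing $\begin{bmatrix}x&p(xzz)\\y&p(yww)\end{bmatrix}\in T$, then (Lemma~\ref{lm:step1}) shows it suffices to prove $L_i\in T\iff R_i\in T$ for each vertex $i$, and finally (Theorem~\ref{thm:laststep}) carries this out with an intricate but finite gluing scheme in which every appearance of the ``difference-term approximation'' $p(x,y,y)\approx x$ occurs across $\theta=\alpha\cap\beta$, where it is an honest equation. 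Your proposal never invokes the auxiliary terms $f_i,g_i$, and without them there is no apparent way to confine the use of $p(x,y,y)\approx x$ to $\theta$; this is exactly why the reduction you sketch does not terminate. In short, the first two displayed equations of your proposal track the paper's Lemma~\ref{lm:agi}, but from that point on the approach diverges from the paper and the remaining argument is not supplied.
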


Lemmas~\ref{lm:subalg}--\ref{lm:C2*} are easy to prove.
Theorem~\ref{thm:main} follows easily from
Lemmas~\ref{lm:subalg}--\ref{lm:crucial},
essentially following our (incorrect) proof in \cite{ksw2016} except
replacing $\Delta_{\alpha,\beta}$ at every step with $\Mast\alpha\beta$.
We shall prove Lemmas~\ref{lm:subalg}--\ref{lm:C2*}
in the next section.

Lemma~\ref{lm:crucial} is not obvious.
For example, in the congruence meet-semidis\-tributive case
(where $p(x,y,z)$ can be taken to be $z$) it implies
the nonobvious fact that
if $\alpha\cap\beta=0$ then $R(\alpha,\beta)=\Mast\alpha\beta$.
Note that in congruence
meet-semidistributive varieties, Moorhead's analysis yields
$R(\alpha,\alpha)= \Mast\alpha\alpha$ for any congruence $\alpha$
\cite[Theorem 5.2]{moor2021}; our Lemma~\ref{lm:crucial}
applied to the
congruence meet-semidistributive case may be seen as extending 
Moorhead's result to pairs of disjoint congruences.
We will prove Lemma~\ref{lm:crucial} 
in Sections~\ref{sec:reduction}
and~\ref{sec:laststep} using our Maltsev condition for varieties with a 
difference term.  The proof of Theorem~\ref{thm:main} is completed in
Section~\ref{section:assembly}.

\section{Proofs of Lemmas~\ref{lm:subalg}--\ref{lm:C2*}} \label{section:easyLM}

\begin{proof}[Proof of Lemma~\ref{lm:subalg}]
This is a special case of \cite[Lemma 2.9~(2)]{moor2021}.  However, 
we include an alternative proof here because we will need some of its 
elements in our proofs of Lemma~\ref{lm:C2*} and Theorem~\ref{thm:quotient}.
Fix $\m a$ and congruences $\alpha,\beta$.  For $n \in \omega$ define
$M_n(\alpha,\beta)$ by setting $M_0(\alpha,\beta)=M(\alpha,\beta)$,
and for $n>0$, 
\begin{itemize}
\item
If $n$ is odd, then a matrix is in $M_n(\alpha,\beta)$ iff it
can be realized as the result of horizontally
gluing two matrices from $M_{n-1}(\alpha,\beta)$; that is, 
\[
\begin{bmatrix}a&c\\b&d \end{bmatrix}
\in M_n(\alpha,\beta)\quad\mbox{iff}\quad
\exists r,s \in A~\mbox{with}~
\begin{bmatrix}a&r\\b&s\end{bmatrix},
\begin{bmatrix}r&c\\s&d\end{bmatrix} \in M_{n-1}(\alpha,\beta).
\]
\item
If $n$ is even, then a matrix is in $M_n(\alpha,\beta)$ iff it
can be realized as the result of vertically 
gluing two matrices from $M_{n-1}(\alpha,\beta)$; that is, 
\[
\begin{bmatrix}a&c\\b&d \end{bmatrix}
\in M_n(\alpha,\beta)\quad\mbox{iff}\quad
\exists r,s \in A~\mbox{with}~
\begin{bmatrix}a&c\\r&s\end{bmatrix},
\begin{bmatrix}r&s\\b&d\end{bmatrix} \in M_{n-1}(\alpha,\beta).
\]
\end{itemize}

One can easily prove by induction that for each $n\geq 0$,
\begin{enumerate}
\item[(i)]
$M_n(\alpha,\beta)$ is a subalgebra
of $\m a^{2\times 2}$,
\item[(ii)]
$M_n(\alpha,\beta)\subseteq \Mast\alpha\beta$, 
\item[(iii)]
$M_n(\alpha,\beta)\subseteq M_{n+1}(\alpha,\beta)$. 
\end{enumerate}
Hence if we let $M_\omega(\alpha,\beta) = \bigcup_{n=0}^\infty M_n(\alpha,
\beta)$, then $M_\omega(\alpha,\beta)$ 
is a subalgebra of $\m a^{2\times 2}$
and $M_\omega(\alpha,\beta)\subseteq\Mast\alpha\beta$.
For the opposite inclusion,
observe that if a matrix $B$ can be obtained from two matrices $C,D \in
M_\omega(\alpha,\beta)$
by gluing (either horizontally or vertically), then by (iii) 
there exists $n \in \omega$ such that $C,D \in M_n(\alpha,\beta)$ and
hence $B \in M_{n+2}(\alpha,\beta)$.  
This proves that $M_\omega(\alpha,\beta)$ 
is closed under horizontal and vertical transitive closure,
which implies $\Mast\alpha\beta\subseteq M_\omega(\alpha,\beta)$.
\end{proof}

\begin{proof}[Proof of Lemma~\ref{lm:transpose}]
This 
follows easily from the symmetry between horizontal and vertical transitive
closures in the definition of $\Mast\alpha\beta$, 
and the fact that the set of generators for
$M(\beta,\alpha)$ is the set of transposes of the generators for
$M(\alpha,\beta)$.
\end{proof}

\begin{proof}[Proof of Lemma~\ref{lm:C2*}]
This can be deduced from \cite[Lemma 2.2]{kea1995} and
\cite[Corollary 4.5]{ks1998} and the description of the linear commutator
in \cite{ks1998}.  For completeness, we give a short direct proof here.
Fix 
$\m a$ and $\alpha,\beta \in \Con(\m a)$ with $[\alpha,\beta]=0$.
To prove (1), we recycle the notation from the proof of Lemma~\ref{lm:subalg}
and show by induction on $n$ that if
$\begin{bmatrix}a&c\\b&d\end{bmatrix} \in M_n(\alpha,\beta)$ then
$a=c$ iff $b=d$.  This claim follows automatically from $[\alpha,\beta]=0$
if $n=0$, so assume $n>0$.  Assume without loss of
generality that $a=c$.  If $n$ is even, then there exist $r,s \in A$ with
\[
\begin{bmatrix}a&a\\r&s\end{bmatrix},
\begin{bmatrix}r&s\\b&d\end{bmatrix} \in M_{n-1}(\alpha,\beta).
\]
By the inductive hypothesis 
we get $r=s$ from the first matrix and then $b=d$ from
the second matrix as required.  If instead $n$ is odd, then there
exist $r,s \in A$ with
\[
\begin{bmatrix}a&r\\b&s\end{bmatrix},
\begin{bmatrix}r&a\\s&d\end{bmatrix} \in M_{n-1}(\alpha,\beta).
\]
One can show that each $M_k(\alpha,\beta)$ is closed under swapping
columns, and we use that here to get 
$\begin{bmatrix} r&a\\s&b \end{bmatrix}\in M_{n-1}(\alpha,\beta)$.
Also note that our assumptions imply $(b,d) \in \alpha\cap\beta$, which
with $[\alpha,\beta]=0$ implies $p(d,b,b)=d$.  Now use the usual trick:
apply $p$ to the following three matrices from $M_{n-1}(\alpha,\beta)$:
\[
\begin{bmatrix} r&a\\s&d\end{bmatrix},
\begin{bmatrix} r&a\\s&b\end{bmatrix},
\begin{bmatrix} b&b\\b&b\end{bmatrix}.
\]
The resulting matrix $\begin{bmatrix}b&b\\b&d\end{bmatrix}$ is again
  in $M_{n-1}(\alpha,\beta)$ (since $M_k(\alpha,\beta)$ is a subalgebra for
  any $k$).
  We can apply the inductive hypothesis
  to this last matrix to get $b=d$, which was our aim.

(2) Suppose
\[
\begin{bmatrix}a&c\\b&d\end{bmatrix},
\begin{bmatrix}a&c'\\b&d\end{bmatrix} \in \Mast\alpha\beta.
\]
Swapping the two columns of the first matrix gives
$\begin{bmatrix}c&a\\d&b\end{bmatrix} \in \Mast\alpha\beta$.  This last
matrix can be glued horizontally to the second matrix above to get a matrix
in $\Mast\alpha\beta$ whose entries on the bottom row are equal, 
and having $c,c'$ as the
entries on the top row.  We then get $c=c'$ from part (1).
\end{proof}

\section{Proof of Lemma~\ref{lm:crucial}: reduction} \label{sec:reduction}

Throughout this section we fix a variety $\varV$ having
a difference term
$p(x,y,z)$.
Let $\m f = \mathbb F_{\varV}(x,y,z,w)$ be the free algebra over $\{x,y,z,w\}$
in $\varV$.  We will consider a certain set $T$ of 
$2\times 2$ matrices with entries in $F$, defined as follows: if
\[
M =
\begin{bmatrix}t_1 & t_3\\
  t_2 &t_4\end{bmatrix}
=
\begin{bmatrix}t_1(x,y,z,w)  & t_3(x,y,z,w)\\
  t_2(x,y,z,w) &t_4(x,y,z,w)\end{bmatrix}
  \in F^{2\times 2},
  \]
then
\begin{align*}
M \in T &\iff \textrm{For all } \m a\in \varV,~\textrm{for all } \alpha,\beta \in \Con(\m a)~\mbox{with}~
[\alpha,\beta]=0, \textrm{ and}\\
&\textrm{for all} \begin{bmatrix}a&c\\b&d \end{bmatrix} \in R(\alpha,\beta)~
\mbox{we have}~ \begin{bmatrix}t_1(a,b,c,d) & t_3(a,b,c,d)\\
t_2(a,b,c,d) & t_4(a,b,c,d) \end{bmatrix} \in \Mast\alpha\beta.
\end{align*}

Our ultimate goal is to prove Lemma~\ref{lm:crucial}.  Note that this is 
equivalent to proving 
$\begin{bmatrix}x & q\\y & w\end{bmatrix} \in T$
where $q$ is the Kiss term obtained from $p$ by \rm{(\ref{LipForm})}.

In what follows, for readability, we will frequently suppress commas
in rendering terms. For example, 
we may write $p(xzz)$ for $p(x,z,z)$.

\begin{lm} \label{lm:agi}
If $\begin{bmatrix} x & p(xzz)\\ y & p(yww)\end{bmatrix} \in T$,
then 
$\begin{bmatrix}x & q(xyzw)\\y & w\end{bmatrix} \in T$.
\end{lm}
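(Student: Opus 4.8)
The plan is to prove the statement pointwise. I would fix $\m a\in\varV$, congruences $\alpha,\beta\in\Con(\m a)$ with $[\alpha,\beta]=0$, and a matrix $\begin{bmatrix}a&c\\b&d\end{bmatrix}\in R(\alpha,\beta)$, and then show $\begin{bmatrix}a&q(a,b,c,d)\\b&d\end{bmatrix}\in\Mast\alpha\beta$, where by $(\ref{LipForm})$ we have $q(a,b,c,d)=p\bigl(p(a,c,c),p(b,d,c),c\bigr)$. The hypothesis $\begin{bmatrix}x&p(xzz)\\y&p(yww)\end{bmatrix}\in T$ says precisely that $\begin{bmatrix}a'&p(a',c',c')\\b'&p(b',d',d')\end{bmatrix}\in\Mast\alpha\beta$ holds for \emph{every} $\begin{bmatrix}a'&c'\\b'&d'\end{bmatrix}\in R(\alpha,\beta)$, and I would invoke this for $\begin{bmatrix}a&c\\b&d\end{bmatrix}$ itself and for various matrices manufactured from it.

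Next I would collect the moves available inside $\Mast\alpha\beta$: by Lemma~\ref{lm:subalg} it is a subalgebra of $\m a^{2\times 2}$; it contains every generator in $G(\alpha,\beta)$; it is closed under horizontal and vertical gluing (by definition), under swapping its two columns (the generating set is symmetric under this and gluings commute with it), and — via Lemma~\ref{lm:transpose} — under swapping its two rows. Moreover, since $[\alpha\cap\beta,\alpha\cap\beta]\le[\alpha,\beta]=0$, the difference term obeys $p(u,v,v)=u$ for every $(u,v)\in\alpha\cap\beta$, in addition to $p(u,u,v)\approx v$. Finally, $R(\alpha,\beta)$ is a subalgebra too, so any matrix built from $\begin{bmatrix}a&c\\b&d\end{bmatrix}$ and from members of $\Mast\alpha\beta$ by a term operation again lies in $R(\alpha,\beta)$ and may therefore be fed to the hypothesis. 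A convenient first step is to glue $\begin{bmatrix}x&p(xzz)\\y&p(yww)\end{bmatrix}$ horizontally with $\begin{bmatrix}p(xzz)&q(xyzw)\\p(yww)&w\end{bmatrix}$, which reduces the task to proving $\begin{bmatrix}p(xzz)&q(xyzw)\\p(yww)&w\end{bmatrix}\in T$.

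The actual construction should follow the shape of $(\ref{LipForm})$, $q=p(p(xzz),p(ywz),z)$: feed the hypothesis $\begin{bmatrix}a&c\\b&d\end{bmatrix}$ to pull $p(a,c,c)$ into a column of a member of $\Mast\alpha\beta$, feed it also images of $\begin{bmatrix}a&c\\b&d\end{bmatrix}$ under the row and column swaps and under term operations with generators to bring in $p(b,d,c)$ and the outer third argument $c$, and then splice the resulting members of $\Mast\alpha\beta$ together with generators and the two identities for $p$, using term operations and horizontal and vertical gluings, so that $q(a,b,c,d)$ ends up in the upper‑right corner. The hard part will be the bookkeeping forced by the rigidity of $\Mast\alpha\beta$ recorded in Lemma~\ref{lm:C2*}: a matrix of $\Mast\alpha\beta$ is pinned down by any three of its four entries, so its members cannot be written down at will. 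In particular, the shortcut of realizing $\begin{bmatrix}x&q(xyzw)\\y&w\end{bmatrix}$ as $p$ applied entrywise to three term‑matrices from $T$ (together with generators) cannot succeed, because matching the right‑hand column would force the term‑matrix $\begin{bmatrix}x&z\\y&w\end{bmatrix}$ to be used, and $\begin{bmatrix}x&z\\y&w\end{bmatrix}\notin T$ in general — that would say $R(\alpha,\beta)=\Mast\alpha\beta$ whenever $[\alpha,\beta]=0$, which fails already in the variety of abelian groups. So the assembly must be arranged so that the column $\begin{bmatrix}c\\d\end{bmatrix}$ enters the computation only through an application of the hypothesis, never directly; carrying this out, and verifying that precisely $q(a,b,c,d)$ appears, is the substantive finite computation, guided by $(\ref{LipForm})$ at every step.
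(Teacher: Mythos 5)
Your overall strategy is the same as the paper's (fix a matrix in $R(\alpha,\beta)$, apply the hypothesis to obtain members of $\Mast\alpha\beta$, splice with matrices built from generators of $M(\alpha,\beta)$, and finish by horizontal gluing), and your first reduction — gluing away the column $\begin{bmatrix}p(xzz)\\p(yww)\end{bmatrix}$ so as to aim at $\begin{bmatrix}p(xzz)&q(xyzw)\\p(yww)&w\end{bmatrix}\in T$ — is sound and consistent with what the paper does. Your observation that one cannot simply apply $p$ entrywise to matrices including $\begin{bmatrix}x&z\\y&w\end{bmatrix}$ is also correct and is exactly the obstacle the construction must route around.

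However, the proposal stops at the plan and never produces the matrices. The part you defer as "the substantive finite computation" is precisely the content of the lemma, and it is not obvious from the heuristics you give. The paper's proof pivots on two concrete moves you do not supply. First, it applies the hypothesis a \emph{second} time, not to $\begin{bmatrix}a&c\\b&d\end{bmatrix}$ nor to a row/column swap of it, but to $\begin{bmatrix}p(acc)&c\\p(bdd)&d\end{bmatrix}$, which lies in $R(\alpha,\beta)$ because $p(xxx)\approx x$ lets one rewrite the right column as $\begin{bmatrix}p(ccc)\\p(ddd)\end{bmatrix}$; this produces $\begin{bmatrix}p(acc)&p(p(acc)cc)\\p(bdd)&p(p(bdd)dd)\end{bmatrix}\in\Mast\alpha\beta$. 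Second, it constructs $\begin{bmatrix}p(p(acc)cc)&q(abcd)\\p(p(bdd)dd)&d\end{bmatrix}$ by applying $p$ coordinatewise to the three matrices $\begin{bmatrix}p(acc)&p(acc)\\p(bdd)&p(bdd)\end{bmatrix}$, $\begin{bmatrix}p(bbc)&p(bdc)\\p(bbd)&p(bdd)\end{bmatrix}$, $\begin{bmatrix}c&c\\d&d\end{bmatrix}$, each a member of $M(\alpha,\beta)$; it is this choice that makes $q(a,b,c,d)=p(p(acc),p(bdc),c)$ appear in the upper-right corner while the lower-right collapses to $p(p(bdd),p(bdd),d)=d$ by $p(x,x,y)\approx y$. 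Gluing the three pieces horizontally finishes. Your sketch mentions row/column swaps and "term operations with generators" in general terms, but it does not hint at the second application of the hypothesis to $\begin{bmatrix}p(acc)&c\\p(bdd)&d\end{bmatrix}$, which is the crucial and non-obvious step; as written, the proposal is an outline that identifies the target without showing how to hit it.
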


\begin{proof}
Assume that
$\m a \in \varV$, $\alpha,\beta \in \Con(\m a)$ with $[\alpha,\beta]=0$,
and
\begin{equation} \label{abcdR}
\begin{bmatrix}a&c\\b&d\end{bmatrix} \in R(\alpha,\beta).
\end{equation}
The assumption of the lemma statement together
with the assumptions of the first
line of the proof imply that
\begin{equation} \label{M0}
M_0 := \begin{bmatrix} a & p(acc)\\b & p(bdd)\end{bmatrix}
\in \Mast\alpha\beta.
\end{equation}
Since $p(xxx)\approx x$,
$(a,b), (c,d)\in\alpha$,  and $(a,c), (b,d)\in\beta$,
we have 
\begin{equation} \label{pacc}
\begin{bmatrix} p(acc) & c\\
  p(bdd) & d \end{bmatrix}=
\begin{bmatrix} p(acc) & p(ccc)\\
  p(bdd) & p(ddd) \end{bmatrix}
\in R(\alpha,\beta).
\end{equation}
Applying to (\ref{pacc})
the argument that led from the matrix
in (\ref{abcdR}) to the matrix in (\ref{M0}),
we obtain 
\begin{equation} \label{M1}
M_1:= \begin{bmatrix} p(acc) & p(p(acc)cc)\\ p(bdd) & p(p(bdd)dd) \end{bmatrix}
\in \Mast\alpha\beta.
\end{equation}
Finally, the following three matrices 
\[
\begin{bmatrix} p(acc) & p(acc)\\
p(bdd) & p(bdd) \end{bmatrix},\quad
\begin{bmatrix} p(bbc) & p(bdc)\\
p(bbd) & p(bdd) \end{bmatrix},\quad
\begin{bmatrix} c & c\\
d & d \end{bmatrix}
\]
are in $M(\alpha,\beta)$ because
$(a,b), (c,d)\in\alpha$  and $(a,c), (b,d)\in\beta$,
so applying $p$ to them we get that the matrix 
\[
M_2 := 
\begin{bmatrix}
p(p(acc)cc) & q(abcd)\\
p(p(bdd)dd) & d
\end{bmatrix}
=
\begin{bmatrix}
p(p(acc),p(bbc),c) & p(p(acc),p(bdc),c)\\
p(p(bdd),p(bbd),d) & p(p(bdd),p(bdd),d)
\end{bmatrix}
\]
is in $M(\alpha,\beta)$ and hence is in $\Mast\alpha\beta$.
Gluing $M_0$, $M_1$ and $M_2$ horizontally gives
\[
\begin{bmatrix} a & q(abcd)\\ b & d \end{bmatrix}
\in \Mast\alpha\beta. 
\]
Since this is true for arbitrary $\m a \in \varV$, $\alpha,\beta \in \Con(\m a)$
with $[\alpha,\beta]=0$, and arbitrary $\begin{bmatrix} a&c\\b&d \end{bmatrix}
\in R(\alpha,\beta)$, we have proved the conclusion of
the lemma.
\end{proof}

Lemma~\ref{lm:agi} reduces the task of proving Lemma~\ref{lm:crucial}
to the following:

\medskip\noindent\textbf{Remaining Goal:} to show
$\begin{bmatrix}x&p(xzz)\\y&p(yww)\end{bmatrix} \in T$.

\medskip
We will complete this remaining goal with the help of the other terms in
the Maltsev condition characterizing a difference term.
This Maltsev condition was alluded to in our paper
\cite[Theorem 1.2, proof of (2) $\Rightarrow$ (3)]{ksw2016}, but now
we need to make it explicit.  By virtue of $p$ being a difference term for
$\varV$, there exist:
\begin{itemize}
\item
a finite vertex-labeled tree $\mathcal T$ with vertex set $I$ 
and root 0,
such that the children of each nonleaf are linearly ordered,
every vertex is labeled by $\sfb$ or $\sfg$, 0 is labeled $\sfb$, and a child
always has the opposite label of its parent; and
\item
a family $\set{(f_i,g_i)}{$i \in I$}$ of pairs of idempotent
3-ary terms in $(x,y,z)$;
\end{itemize}
which, with $p$, 
satisfy some identities which will be stated after some preparation.
First,
\begin{itemize}
\item
For each nonleaf vertex $i \in I$ let $\alpha(i)$ and $\omega(i)$ 
denote the first and last children respectively of $i$;
\item
For each vertex $i \in I\setminus \{0\}$, let $\pi(i)$ denote the parent
of $i$, and if
$i\ne \omega(\pi(i))$ then let $\suc(i)$ denote the first child of $\pi(i)$
after $i$.
\end{itemize}
Then the identities are:
\begin{align}
f_i(x,y,x) &\approx g_i(x,y,x) \qquad\mbox{for all $i \in I$}\label{ax1}\\
f_0(x,y,z) &\approx x\label{ax2}\\
f_i(x,x,y) & \approx g_i(x,x,y) \qquad\mbox{if $i$ is a leaf colored $\sfb$}
\label{ax3}\\
f_i(x,y,y) & \approx g_i(x,y,y) \qquad\mbox{if $i$ is a leaf colored $\sfg$}
\label{ax4}\\
f_i(x,x,y) &\approx f_{\alpha(i)}(x,x,y) \qquad \mbox{if $i$ is a nonleaf colored $\sfb$}\label{ax5}\\
g_i(x,x,y) &\approx g_{\omega(i)}(x,x,y) \qquad \mbox{if $i$ is a nonleaf colored $\sfb$}\label{ax6}\\
f_i(x,y,y) &\approx f_{\alpha(i)}(x,y,y) \qquad \mbox{if $i$ is a nonleaf colored $\sfg$}\label{ax7}\\
g_i(x,y,y) &\approx g_{\omega(i)}(x,y,y) \qquad \mbox{if $i$ is a nonleaf colored $\sfg$}\label{ax8}\\
g_i(x,x,y) &\approx f_{\suc(i)}(x,x,y) \qquad \mbox{if $i\ne 0$, $\pi(i)$ is
colored $\sfb$, and $i\ne \omega(\pi(i))$}\label{ax9}\\
g_i(x,y,y) &\approx f_{\suc(i)}(x,y,y) \qquad \mbox{if $i\ne 0$, $\pi(i)$ is
colored $\sfg$, and $i\ne \omega(\pi(i))$}\label{ax10}\\
g_0(x,y,y) &\approx p(x,y,y)\label{ax11}\\
p(x,x,y) &\approx y. \label{ax12}
\end{align}
(Note: in 
\cite[Theorem 1.2, proof of (2) $\Rightarrow$ (3)]{ksw2016}, 
we used $q$ to denote $g_0$, but we do not do that here since we are 
using $q$ for the Kiss term.)

\begin{df} \label{df:LiRi}
For each $i \in I$, define the matrices $L_i,R_i \in F^{2\times 2}$ by
\[
L_i =
\begin{bmatrix} f_i(xxz) & g_i(xxz)\\
f_i(yyw) & g_i(yyw)\end{bmatrix}  \quad\mbox{and}\quad
R_i=\begin{bmatrix} f_i(xzz) & g_i(xzz)\\
f_i(yww) & g_i(yww)\end{bmatrix}.
\]
\end{df}

Note in particular that 
\[
R_0 = \begin{bmatrix} f_0(xzz) & g_0(xzz)\\f_0(yww) & g_0(yww)
\end{bmatrix} = \begin{bmatrix}x&p(xzz)\\y & p(yww)
\end{bmatrix}
\]
by identities \eqref{ax2} and \eqref{ax11}.
Thus we can restate our remaining goal as ``$R_0 \in T$.''

\begin{lm}  \label{lm:step1}
If for every $i \in I$ we have
$L_i \in T \iff R_i \in T$, then $R_0 \in T$.
\end{lm}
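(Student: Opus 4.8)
The plan is to prove the stronger statement that $L_i\in T$ \emph{and} $R_i\in T$ for every vertex $i\in I$, by induction on the tree $\mathcal T$ (handling leaves first and working toward the root $0$); the assertion $R_0\in T$ is then the instance $i=0$. Throughout the induction we fix, once and for all, an algebra $\m a\in\varV$, congruences $\alpha,\beta\in\Con(\m a)$ with $[\alpha,\beta]=0$, and a matrix $\begin{bmatrix}a&c\\b&d\end{bmatrix}\in R(\alpha,\beta)$ (so that $(a,b),(c,d)\in\alpha$ and $(a,c),(b,d)\in\beta$); by the definition of $T$ it then suffices to show that the evaluations of $L_i$ and $R_i$ at $(x,y,z,w):=(a,b,c,d)$ lie in $\Mast\alpha\beta$.

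For the base case, let $i$ be a leaf. If $i$ is colored $\sfb$, then identity \eqref{ax3} forces the two columns of $L_i$ to be equal, so the evaluation of $L_i$ has the form $\begin{bmatrix}u&u\\v&v\end{bmatrix}$ with $u=f_i(a,a,c)$ and $v=f_i(b,b,d)$; since $f_i$ is idempotent and $(a,b),(c,d)\in\alpha$ we get $(u,v)\in\alpha$, so this matrix is one of the generators of $M(\alpha,\beta)$ exhibited in \eqref{eq:G} and hence lies in $\Mast\alpha\beta$. Thus $L_i\in T$, and the hypothesis of the lemma then gives $R_i\in T$. If $i$ is colored $\sfg$, the symmetric argument using \eqref{ax4} shows $R_i\in T$, and $L_i\in T$ follows from the hypothesis.

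For the inductive step, let $i$ be a nonleaf, assume $L_j,R_j\in T$ for all children $j$ of $i$, and list the children of $i$ in order as $j_1=\alpha(i),j_2,\ldots,j_k=\omega(i)$, so that $j_{m+1}=\suc(j_m)$ for $1\le m<k$. Suppose first that $i$ is colored $\sfb$ (so each $j_m$ is colored $\sfg$). Then \eqref{ax5} says the left column of $L_i$ equals the left column of $L_{j_1}$; \eqref{ax6} says the right column of $L_i$ equals the right column of $L_{j_k}$; and for $1\le m<k$, \eqref{ax9}---applied with its index taken to be $j_m$, which is legitimate since the parent of $j_m$ is $i$, colored $\sfb$, and $j_m\neq\omega(i)$---says the right column of $L_{j_m}$ equals the left column of $L_{j_{m+1}}$. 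Consequently, after evaluation at $(a,b,c,d)$, the matrix $L_i$ is obtained from $L_{j_1},\ldots,L_{j_k}$ by a sequence of $k-1$ horizontal gluings; since each $L_{j_m}$ evaluates into $\Mast\alpha\beta$ and $\Mast\alpha\beta$ is closed under horizontal gluing (Definition~\ref{df:delta}), so does $L_i$. Hence $L_i\in T$, and the hypothesis gives $R_i\in T$. If instead $i$ is colored $\sfg$, the dual argument---using \eqref{ax7}, \eqref{ax8}, \eqref{ax10} and horizontal gluings of $R_{j_1},\ldots,R_{j_k}$ into $R_i$---yields $R_i\in T$, whence $L_i\in T$ by the hypothesis. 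This completes the induction, and taking $i=0$ gives $R_0\in T$.

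I do not expect a serious obstacle in this particular lemma: it is essentially a bookkeeping argument, and the one point to get exactly right is that the tabulated identities \eqref{ax5}--\eqref{ax10} really do align the columns of the $L_{j_m}$'s (respectively the $R_{j_m}$'s) in just the way needed to glue them horizontally into $L_i$ (respectively $R_i$), supplemented by the trivial observation in the base case that an idempotent term applied to $\alpha$-related tuples returns an $\alpha$-related pair. The substantive work of this part of the paper lies not here but in verifying the hypothesis ``$L_i\in T\iff R_i\in T$'' for each $i$, which is carried out in the following section.
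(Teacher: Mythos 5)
Your proof is correct and mirrors the paper's argument exactly: induction on the tree from the leaves inward, with leaves resolved by identities \eqref{ax3}/\eqref{ax4} and nonleaves resolved by horizontally gluing the children's matrices via \eqref{ax5}, \eqref{ax6}, \eqref{ax9} (resp.\ \eqref{ax7}, \eqref{ax8}, \eqref{ax10}), invoking the hypothesis $L_i\in T\iff R_i\in T$ whenever one of the pair has been established. The only stylistic caveat is that ``fix once and for all'' should really be ``fix arbitrarily,'' since $L_i\in T$ is a universally quantified statement and the induction hypothesis must be understood as the global memberships $L_j,R_j\in T$; your argument does handle this correctly.
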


\begin{proof}
It suffices by the assumption to show that 
$T\cap \{L_0,R_0\}\ne\varnothing$.  In fact we will show
$T \cap \{L_i,R_i\}\ne \varnothing$
for all $i \in I$,  by induction on $i$ starting at the 
leaves.  If $i \in I$ is a leaf colored $\sfb$, then $L_i \in T$ by identity
\eqref{ax3}.  Similarly, if $i \in I$ is a leaf colored $\sfg$, then
$R_i \in T$ by identity \eqref{ax4}.

Next assume that $i \in I$ is not a leaf.  Let $\alpha(i)=i_1,\ldots,i_k =
\omega(i)$ be the list of children of $i$ in their prescribed order.  
By induction, we can assume that $T\cap \{L_j,R_j\}\ne \varnothing$ for each
$j=i_1,\ldots,i_k$.  Hence by the assumption we have $\{L_j,R_j\}\subseteq
T$ for each $j=i_1,\ldots,i_k$.

First consider the case when $i$ is colored $\sfb$.  
Let $\m a \in \varV$,
let $\alpha,\beta \in \Con(\m a)$ with $[\alpha,\beta]=0$, and let
$\begin{bmatrix}a&c\\b&d\end{bmatrix} \in R(\alpha,\beta)$.  Consider the 
matrices 
\[
\begin{bmatrix}f_{i_1}(aab) & g_{i_1}(aab)\\f_{i_1}(ccd) & g_{i_1}(ccd)\end{bmatrix},
\begin{bmatrix}f_{i_2}(aab) & g_{i_2}(aab)\\f_{i_2}(ccd) & g_{i_2}(ccd)\end{bmatrix},
\cdots,
\begin{bmatrix}f_{i_k}(aab) & g_{i_k}(aab)\\f_{i_k}(ccd) & g_{i_k}(ccd)\end{bmatrix}.
\]
Because $L_{i_1},\ldots,L_{i_k} \in T$, the above matrices are in 
$\Mast\alpha\beta$.  And because $\suc(i_1)=i_2$, $\suc(i_2)=i_3$ etc., 
the identities \eqref{ax9} guarantee that we can glue the above
matrices horizontally to get 
\[
\begin{bmatrix} 
f_{\alpha(i)}(aab) & g_{\omega(i)}(aab)\\
f_{\alpha(i)}(ccd) & g_{\omega(i)}(ccd)\end{bmatrix} \in \Mast\alpha\beta.
\]
Identities \eqref{ax5} and \eqref{ax6} give that this matrix equals
$\begin{bmatrix} 
f_i(aab) & g_i(aab)\\
f_i(ccd) & g_i(ccd)\end{bmatrix}$.  
This proves $L_i \in T$.
The case when $i$ is colored $\sfg$ is handled
similarly.
\end{proof}

\section{Proof of Lemma~\ref{lm:crucial}: final step} \label{sec:laststep}

We continue to assume that $\varV$ is a variety with a difference term
$p$ and other terms $f_i,g_i$ ($i \in I$) witnessing the Maltsev 
condition stated in the previous section.  By Lemmas~\ref{lm:agi} and~\ref{lm:step1}, the next theorem will finish the proof of Lemma~\ref{lm:crucial}.

\begin{thm} \label{thm:laststep}
For every $i \in I$, $L_i \in T \iff R_i \in T$.
\end{thm}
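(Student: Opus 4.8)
\noindent The plan is to prove the two implications separately; fix $i\in I$ throughout. For the forward direction, assume $L_i\in T$; to prove $R_i\in T$ we fix witnessing data $\m a\in\varV$, $\alpha,\beta\in\Con(\m a)$ with $[\alpha,\beta]=0$, and $N=\begin{bmatrix}a&c\\b&d\end{bmatrix}\in R(\alpha,\beta)$, the goal being to place $R_i|_N=\begin{bmatrix}f_i(a,c,c)&g_i(a,c,c)\\f_i(b,d,d)&g_i(b,d,d)\end{bmatrix}$ into $\Mast\alpha\beta$. First I would use the identity \eqref{ax1} to introduce the column $\mathbf m=\begin{bmatrix}f_i(a,c,a)\\f_i(b,d,b)\end{bmatrix}$, which by \eqref{ax1} equals $\begin{bmatrix}g_i(a,c,a)\\g_i(b,d,b)\end{bmatrix}$ as well, and then reduce the goal to showing that the two ``bridge'' matrices $B=\begin{bmatrix}f_i(a,c,c)&f_i(a,c,a)\\f_i(b,d,d)&f_i(b,d,b)\end{bmatrix}$ and $B'=\begin{bmatrix}g_i(a,c,a)&g_i(a,c,c)\\g_i(b,d,b)&g_i(b,d,d)\end{bmatrix}$ both lie in $\Mast\alpha\beta$: indeed, by \eqref{ax1} the right column of $B$ and the left column of $B'$ are both equal to $\mathbf m$, so horizontally gluing $B$ to $B'$ yields exactly $R_i|_N$, and $\Mast\alpha\beta$ is closed under horizontal gluing by Definition~\ref{df:delta}.

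The next step is to notice that $B$ is $f_i$ applied coordinatewise in $\m a^{2\times 2}$ to the triple $\bigl(\begin{bmatrix}a&a\\b&b\end{bmatrix},\ \begin{bmatrix}c&c\\d&d\end{bmatrix},\ N'\bigr)$, and $B'$ is $g_i$ applied coordinatewise to $\bigl(\begin{bmatrix}a&a\\b&b\end{bmatrix},\ \begin{bmatrix}c&c\\d&d\end{bmatrix},\ N\bigr)$, where $N'$ is $N$ with its columns interchanged (again a member of $R(\alpha,\beta)$). In each of these triples the first two matrices are generators lying in $G(\alpha,\beta)\subseteq M(\alpha,\beta)\subseteq\Mast\alpha\beta$, so the only obstruction to concluding $B,B'\in\Mast\alpha\beta$ outright is that $N$ and $N'$ need not themselves belong to $\Mast\alpha\beta$. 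This is exactly where the hypothesis $L_i\in T$ and the difference term are brought to bear: from $L_i\in T$ we get $L_i|_N,L_i|_{N'}\in\Mast\alpha\beta$, and the plan is to combine these known members of $\Mast\alpha\beta$ (together with their column-swaps and with suitable generators) by means of the term $p$ --- exploiting $p(x,x,y)\approx y$ from \eqref{ax12} together with the fact that $[\alpha,\beta]=0$ forces $\alpha\cap\beta$ to be abelian, whence $p(u,v,v)=u$ for all $(u,v)\in\alpha\cap\beta$, so that $p$ behaves like a Maltsev-style difference on the entries that matter --- and then to finish with a horizontal or vertical gluing, invoking the thinness of $\Mast\alpha\beta$ supplied by Lemma~\ref{lm:C2*} where needed to confirm that the matrices so obtained really are $B$ and $B'$.

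For the reverse implication $R_i\in T\Rightarrow L_i\in T$ I would run a parallel (though not literally symmetric) argument: using \eqref{ax1} one again reduces the target $L_i|_N\in\Mast\alpha\beta$ to the membership of two bridge matrices, each of which is $f_i$ or $g_i$ applied coordinatewise to two generators together with $N$ or $N'$, and one then uses $R_i|_N,R_i|_{N'}\in\Mast\alpha\beta$ in place of the $L_i$-matrices, once more transporting membership across the single non-generator argument with the aid of $p$ and $[\alpha,\beta]=0$.

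I expect the main obstacle to be precisely this transport step: showing that the deviation of a bridge matrix from being a term built out of members of $\Mast\alpha\beta$ --- the deviation caused by the occurrence of $N$ (or $N'$) among its arguments --- is exactly cancelled when the difference term is applied against the appropriate hypothesis matrix. That is the one place where the full Maltsev condition witnessing that $p$ is a difference term, together with the assumption $[\alpha,\beta]=0$, is used in an essential, non-bookkeeping way; all the rest is gluing and the easy structural facts recorded in Lemmas~\ref{lm:subalg}--\ref{lm:C2*}.
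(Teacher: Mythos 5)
Your outline is not a proof: you set up a decomposition but then explicitly defer the one step that carries all the difficulty, and your plan for that step is too vague to count as an argument. Let me explain the gap and contrast it with what the paper actually does.

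Your reduction of $R_i|_N$ to the horizontal glue of the bridge matrices $B$ and $B'$ is sound, and your observation that $B$ and $B'$ are $f_i$ (resp.\ $g_i$) applied coordinatewise to two generators and the non-member matrix $N'$ (resp.\ $N$) correctly isolates the obstruction. But the sentence in which you propose to ``combine \ldots by means of the term $p$ \ldots and then to finish with a horizontal or vertical gluing'' describes a strategy, not a construction, and you then concede: ``I expect the main obstacle to be precisely this transport step.'' That transport step \emph{is} the theorem. The paper's proof of it occupies almost the entire section: it introduces eighteen auxiliary matrices ($B_1,\dots,B_{10}$, $G_1,G_2$, $H_1,H_2$, $C_1,\dots,C_6$) and three nontrivial claims (Claims~\ref{clm:theta}, \ref{clm:thetabar}, \ref{clm:sequal}), the last of which --- showing $s_1=\bars_1$ via a five-argument composite of $p$ and a careful commutator-shift of the pair $(r,r_0)\in\theta$ --- is the ``magic'' that makes the two halves glue vertically. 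There is no short-cut through the abelianness of $\alpha\cap\beta$; the argument genuinely needs the more refined facts that entries match only up to $\theta$ (the red squiggly lines in Figures~\ref{fig1} and~\ref{fig3}) and that the $\theta$-defects cancel under $p$.

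Two further structural points you missed. First, the paper does not prove both implications separately: it proves only $R_{fg}\in T\Rightarrow L_{fg}\in T$ for arbitrary idempotent $f,g$ with $f(x,y,x)\approx g(x,y,x)$, and obtains the other direction for free from the automorphism $\delta$ of $\m f$ swapping $x\leftrightarrow z$, $y\leftrightarrow w$ (under which $T$ is invariant and which carries $L_i$ to an $R_{fg}$ for suitably re-ordered $f,g$). Running ``a parallel (though not literally symmetric) argument'' for the other direction, as you propose, would double the work without reason. Second --- and this is the crucial missing idea --- the hypothesis $R_{fg}\in T$ is used in the paper with the roles of $\alpha$ and $\beta$ \emph{swapped}: one applies $R_{fg}\in T$ to matrices in $R(\beta,\alpha)$ using $[\beta,\alpha]=0$ (commutativity of the commutator, \cite[Lemma~2.2]{kea1995}), then takes transposes via Lemma~\ref{lm:transpose} to land in $\Mast\alpha\beta$. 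This manoeuvre is what produces the matrices $H_1,H_2$ from which $B_3$ and $B_6$ are built, and there is no hint of it in your plan; using only $L_i|_N$ and $L_i|_{N'}$ as you suggest, which evaluate $f_i,g_i$ at triples of the shape $(a,a,c)$ and $(c,c,a)$, does not give you access to evaluations at the $(b,a,a)$- and $(d,c,c)$-type triples that the construction actually needs.
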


\begin{proof}
What we actually prove 
is that for any two idempotent 3-ary terms
$f(x,y,z)$, $g(x,y,z)$ satisfying $\varV\models f(x,y,x)\approx g(x,y,x)$, if
we set
\[
L_{fg} := \begin{bmatrix}f(xxz) & g(xxz)\\f(yyw) & g(yyw)\end{bmatrix}
\quad\mbox{and}\quad
R_{fg} := \begin{bmatrix}f(xzz) & g(xzz)\\f(yww) & g(yww)\end{bmatrix}
\]
then $R_{fg} \in T$ implies $L_{fg} \in T$.
This will suffice as we now explain.  Fix $i \in I$.
Applying the above claim to $(f,g) = (f_i,g_i)$  and
invoking identity \eqref{ax1} shows that $R_i \in T$ implies $L_i \in T$.
For the reverse implication, let $\delta$ be the automorphism of $\m f$
which swaps $x$ with $z$ and $y$ with $w$.  
It is easy to check that $T$ is closed under the componentwise action of
$\delta$.  Thus if $L_i \in T$ then 
\[
\delta(L_i) = \begin{bmatrix}f_i(zzx) & g_i(zzx)\\
f_i(wwy) & g_i(wwy)\end{bmatrix} \in T.
\]
Now define $f(x,y,z) := f_i(z,y,x)$ and $g(x,y,z) := g_i(z,y,x)$.  
Then $R_{fg}=\delta(L_i) \in T$.
We have $\varV\models f(xyx)\approx g(xyx)$ by identity \eqref{ax1}, so we can
apply the above claim to get $L_{fg} \in T$, and hence
$R_i = \delta(L_{fg}) \in T$ as well.

So let $f,g$ be idempotent 3-ary terms satisfying 
\begin{equation}
\varV\models f(xyx)\approx g(xyx),
\tag{$\dagger$} \label{4.1ass}
\end{equation}
and assume $R_{fg}\in T$.  
In order to prove $L_{fg}\in T$, we must show that
\[
\begin{bmatrix} f(aac) & g(aac)\\f(bbd) & g(bbd) \end{bmatrix}
\in \Mast\alpha\beta
\]
whenever $\m a \in \varV$, $\alpha,\beta \in \Con(\m a)$
with $[\alpha,\beta]=0$,
and $\begin{bmatrix}a&c\\b&d \end{bmatrix} \in R(\alpha,\beta)$.

To this end, choose and fix arbitrary
$\m a \in \varV$, $\alpha,\beta \in \Con(\m a)$
with $[\alpha,\beta]=0$,
and $\begin{bmatrix}a&c\\b&d \end{bmatrix} \in R(\alpha,\beta)$.
We start by defining some matrices in $\Mast\alpha\beta$.  
Let
\[
B_1 := \begin{bmatrix} f(aac) & f(aca)\\f(bac) & f(bca) \end{bmatrix}
\qquad\qquad
B_2 := \begin{bmatrix} g(aca) & g(aac)\\g(bca) & g(bac) \end{bmatrix}.
\]
These matrices are in $M(\alpha,\beta)$, since each can 
be realized as a coordinatewise application of $f$ or $g$ to three
generators of $M(\alpha,\beta)$.

Next define the matrix 
\[
G_1 :=\begin{bmatrix} f(baa) & f(baa)\\g(baa) & g(baa) \end{bmatrix}.
\]
Here, the idempotence of $f$ and $g$
yields that 
$f(baa)
\stackalpha f(aaa) = a = g(aaa) \stackalpha g(baa)$,
so $G_1$ is one of the
generators of $M(\alpha,\beta)$.

Next, we use the assumption that $R_{fg} \in T$ as follows.  Clearly
\[
\begin{bmatrix} b&a\\d&c\end{bmatrix}, 
\begin{bmatrix} d&c\\b&a\end{bmatrix} \in R(\beta,\alpha).
\]
Since $[\beta,\alpha]=0$ by commutativity of the commutator in $\varV$
\cite[Lemma~2.2]{kea1995}, 
applying $R_{fg} \in T$ gives
\[
\begin{bmatrix} f(baa) & g(baa)\\f(dcc) & g(dcc)\end{bmatrix},
\begin{bmatrix} f(dcc) & g(dcc)\\f(baa) & g(baa)\end{bmatrix}
\in \Mast\beta\alpha.
\]
Then taking transposes and appealing to Lemma~\ref{lm:transpose} we get that
the following are in $\Mast\alpha\beta$:
\[
H_1 := \begin{bmatrix} f(baa) &f(dcc)\\g(baa) & g(dcc)\end{bmatrix}
\qquad\qquad
H_2 := \begin{bmatrix} f(dcc) &f(baa)\\g(dcc) & g(baa)\end{bmatrix}.
\]
Since $\Mast\alpha\beta$ is a subalgebra of $\m a^{2\times 2}$ by
Lemma~\ref{lm:subalg}, we can apply $f$ coordinatewise 
to $G_1,H_1,H_2$ to get another matrix in $\Mast\alpha\beta$:
\[
B_3 := f(G_1,H_1,H_2) = \begin{bmatrix} 
  f{\circ}f \begin{pmatrix} baa\\[-1mm] baa\\[-1mm] dcc\end{pmatrix} 
& f{\circ}f \begin{pmatrix} baa\\[-1mm] dcc\\[-1mm] baa\end{pmatrix} \\
\rule{0mm}{10mm}
  f{\circ}g \begin{pmatrix} baa\\[-1mm] baa\\[-1mm] dcc\end{pmatrix} 
& f{\circ}g \begin{pmatrix} baa\\[-1mm] dcc\\[-1mm] baa\end{pmatrix} 
\end{bmatrix}
\] 
where we are using the notation 
$f{\circ}h \begin{pmatrix} \tup{x}\\[-1mm] \tup{y}\\[-1mm] \tup{z}\end{pmatrix} $ for $f(h(\tup{x}),h(\tup{y}),h(\tup{z}))$.

We now focus on the matrices $B_1,B_2,B_3 \in \Mast\alpha\beta$.  Let's
define new names for (some of) their entries as follows:
\begin{align*}
B_1 &= \begin{bmatrix} f(aac) & f(aca)\\f(bac) & f(bca) \end{bmatrix}
= \begin{bmatrix} f(aac) & f(aca)\\ r_0 & t_0\end{bmatrix}\\
B_2 &= \begin{bmatrix} g(aca) & g(aac)\\g(bca) & g(bac) \end{bmatrix}
= \begin{bmatrix} g(aca) & g(aac)\\ u_0 & g(bac)\end{bmatrix}\\
B_3 &= \begin{bmatrix} 
  f{\circ}f \begin{pmatrix} baa\\[-1mm] baa\\[-1mm] dcc\end{pmatrix} 
& f{\circ}f \begin{pmatrix} baa\\[-1mm] dcc\\[-1mm] baa\end{pmatrix} \\
\rule{0mm}{10mm}
  f{\circ}g \begin{pmatrix} baa\\[-1mm] baa\\[-1mm] dcc\end{pmatrix} 
& f{\circ}g \begin{pmatrix} baa\\[-1mm] dcc\\[-1mm] baa\end{pmatrix} 
\end{bmatrix} 
= \begin{bmatrix} r & t\\ s& u\end{bmatrix}.
\end{align*}
These matrices ``fit roughly'' together as illustrated in Figure~\ref{fig1}.
The parallel \greencol\ lines between the upper-right entry of $B_1$ and the 
upper-left entry of $B_2$ indicate an equality (in this case due to 
the assumption \eqref{4.1ass}).
Let $\theta = \alpha\cap\beta$ and note that $[\theta,\theta]\leq
[\alpha,\beta]=0$, so $\theta$ is abelian.
The red squiggly lines indicate pairs that are in $\theta$,
as shown in the next claim.

\begin{figure}
\begin{tikzpicture}

\node (faac) at (0.3,5) {$f(aac)$};
\node (r0) at (0,3) {$r_0$};
\node (r) at (0,2) {$r$};
\node (s) at (0,0) {$s$};
\node (t) at (3,2) {$t$};
\node (u) at (3,0) {$u$};
\node (t0) at (3.5,3) {$t_0$};
\node (faca) at (3.2,5) {$f(aca)$};

\node (gaca) at (5,5) {$g(aca)$};
\node (gaac) at (7.9,5) {$g(aac)$};
\node (u0) at (4.7,-.5) {$u_0$};
\node (gbac) at (7.9,-.5) {$g(bac)$};

\draw [-] (-.3,-.2) -- (3.2,-.2) -- (3.2,2.2) -- (-.3,2.2) -- cycle;
\draw [-] (4.4,-.7) -- (8.5,-.7) -- (8.5,5.3) -- (4.4,5.3) -- cycle;
\draw [-] (-.4,2.8) -- (3.8,2.8) -- (3.8,5.3) -- (-.4,5.3) -- cycle;

\node at (1.45,1) {$B_3$};
\node at (6.45,2.3) {$B_2$};
\node at (1.7,4.05) {$B_1$};

\draw [-,very thick,snake=snake,segment amplitude=.4mm,segment length=2mm,color=red] (0,2.2) -- (0,2.8);

\draw [-,very thick,snake=snake,segment amplitude=.4mm,segment length=2mm,color=red] (3,2.2) -- (3.5,2.8);

\draw [-,very thick,snake=snake,segment amplitude=.4mm,segment length=2mm,color=red] (3.2,0) -- (4.4,-.5);

\draw[-,ultra thick,color=\greencol] (3.8,5.05) -- (4.4,5.05);
\draw[-,ultra thick,color=\greencol] (3.8,4.95) -- (4.4,4.95);

\end{tikzpicture}

\caption{} \label{fig1}
\end{figure}

\begin{clm} \label{clm:theta}
$(r,r_0),(t,t_0),(u,u_0) \in \theta$.
\end{clm}

\begin{proof}
$r=f(f(baa),f(baa),f(dcc)) \stackalpha f(f(aaa),f(aaa),f(ccc))=f(aac)$ by the
idempotence of $f$,
and $f(aac) \stackalpha f(bac)=r_0$.  Thus $(r,r_0) \in \alpha$.
Similarly,
\[
r \stackbeta f(f(baa),f(baa),f(baa)) = f(baa) \stackbeta f(bac) = r_0
\]
proving $(r,r_0)\in\beta$.  So $(r,r_0) \in \alpha\cap\beta = \theta$.  The
proof is similar for $(t,t_0)$.  For $(u,u_0)$, also use the fact that
$f(aca)=g(aca)$ by the assumption~\eqref{4.1ass}.
\end{proof}

Next, we use the difference term $p$ and the fact that $(r,r_0) \in \theta$ 
and $(r_0,t_0) \in \beta$
to find $t_1$ so that
$\begin{bmatrix}r_0&t_0\\r&t_1\end{bmatrix} \in M(\theta,\beta)$.
Namely, we observe that
\[
\begin{bmatrix} r_0&r_0\\r&r\end{bmatrix},
\begin{bmatrix} r_0&r_0\\r_0&r_0\end{bmatrix},
\begin{bmatrix} r_0&t_0\\r_0&t_0\end{bmatrix} 
\]
are all generators for $M(\theta,\beta)$; applying $p$ to them coordinatewise
and noting that $p(r,r_0,r_0)=r$ as $\theta$ is abelian gives 
\[
C_1 := \begin{bmatrix}r_0&t_0\\r&t_1\end{bmatrix} \in M(\theta,\beta) \qquad\mbox{where
$t_1 = p(r,r_0,t_0)$.}
\]
In particular, we get that
$t_0=p(r_0,r_0,t_0)\stacktheta p(r,r_0,t_0)=t_1$,
so $(t_0,t_1) \in \theta$, and therefore $(t,t_1) \in \theta$
by transitivity.

Similarly, applying $p$ to
generators $\bmat tt{t_1}{t_1},\bmat {t_1}t{t_1}t,
\bmat {t_1}u{t_1}u \in G(\alpha,\theta)$
we get a matrix 
\[
C_2:= \begin{bmatrix}t&t_1\\u&u_1\end{bmatrix} \in M(\alpha,\theta)\qquad
\mbox{where $u_1 = p(t_1,t,u)$}.
\]
We get that $(u,u_1) \in \theta$ so 
$(u_0,u_1) \in \theta$. Employing the argument one more time,
applying $p$ to
generators $\bmat {u_1}{u_0}{u_1}{u_0},\bmat {u_1}{u_1}{u_1}{u_1},
\bmat ss{u_1}{u_1} \in G(\theta,\beta)$,
we get a matrix
\[
C_3:= \begin{bmatrix}s&u_1\\s_1&u_0\end{bmatrix} \in M(\theta,\beta)\qquad
\mbox{where $s_1 = p(u_0,u_1,s)$}.
\]

The matrices $C_1,C_2,C_3$ fit perfectly with $B_1,B_2,B_3$ as shown in
Figure~\ref{fig2}.  

\begin{figure}
\begin{tikzpicture}

\draw [-] (-.3,-1.7) -- (4.2,-1.7) -- (4.2,-.7) -- (-.3,-.7) -- cycle;
\draw [-] (-.3,-.2) -- (2.7,-.2) -- (2.7,1.8) -- (-.3,1.8) -- cycle;
\draw [-] (3.2,-.2) -- (4.2,-.2) -- (4.2,1.8) -- (3.2,1.8) -- cycle;
\draw [-] (-.3,2.3) -- (4.2,2.3) -- (4.2,3.3) -- (-.3,3.3) -- cycle;
\draw [-] (-.3,3.8) -- (4.2,3.8) -- (4.2,5.8) -- (-.3,5.8) -- cycle;
\draw [-] (4.7,-1.7) -- (8.2,-1.7) -- (8.2,5.8) -- (4.7,5.8) -- cycle;

\node at (1.95,-1.2) {$C_3$};
\node at (1.2,.8) {$B_3$};
\node at (3.7,.8) {$C_2$};
\node at (1.95,2.8) {$C_1$};
\node at (1.95,4.8) {$B_1$};
\node at (6.45,2.05) {$B_2$};

\node at (0,-1.5) {$s_1$};
\node at (-.08,-.9) {$s$};
\node at (4,-1.5) {$u_0$};
\node at (4,-.9) {$u_1$};
\node at (-.08,0) {$s$};
\node at (2.5,0) {$u$};
\node at (-.08,1.6) {$r$};
\node at (2.5,1.6) {$t$};
\node at (3.4,1.6) {$t$};
\node at (3.4,0) {$u$};
\node at (4,0) {$u_1$};
\node at (4,1.56) {$t_1$};
\node at (-.08,2.5) {$r$};
\node at (0,3.1) {$r_0$};
\node at (4,2.5) {$t_1$};
\node at (4,3.1) {$t_0$};
\node at (4,4) {$t_0$};
\node at (0,4) {$r_0$};
\node at (.35,5.55) {$f(aac)$};
\node at (3.55,5.55) {$f(aca)$};
\node at (5.35,5.55) {$g(aca)$};
\node at (7.55,5.55) {$g(aac)$};
\node at (7.55,-1.45) {$g(bac)$};
\node at (4.93,-1.5) {$u_0$};

\draw[-,ultra thick,color=\greencol] (4.2,-1.55) -- (4.7,-1.55);
\draw[-,ultra thick,color=\greencol] (4.2,-1.45) -- (4.7,-1.45);
\draw[-,ultra thick,color=\greencol] (4.05,-.7) -- (4.05,-.2);
\draw[-,ultra thick,color=\greencol] (3.95,-.7) -- (3.95,-.2);

\draw[-,ultra thick,color=\greencol] (.05,-.7) -- (.05,-.2);
\draw[-,ultra thick,color=\greencol] (-.05,-.7) -- (-.05,-.2);
\draw[-,ultra thick,color=\greencol] (2.7,-.05) -- (3.2,-.05);
\draw[-,ultra thick,color=\greencol] (2.7,.05) -- (3.2,.05);
\draw[-,ultra thick,color=\greencol] (2.7,1.55) -- (3.2,1.55);
\draw[-,ultra thick,color=\greencol] (2.7,1.65) -- (3.2,1.65);

\draw[-,ultra thick,color=\greencol] (.05,1.8) -- (.05,2.3);
\draw[-,ultra thick,color=\greencol] (-.05,1.8) -- (-.05,2.3);
\draw[-,ultra thick,color=\greencol] (4.05,1.8) -- (4.05,2.3);
\draw[-,ultra thick,color=\greencol] (3.95,1.8) -- (3.95,2.3);

\draw[-,ultra thick,color=\greencol] (.05,3.3) -- (.05,3.8);
\draw[-,ultra thick,color=\greencol] (-.05,3.3) -- (-.05,3.8);
\draw[-,ultra thick,color=\greencol] (4.05,3.3) -- (4.05,3.8);
\draw[-,ultra thick,color=\greencol] (3.95,3.3) -- (3.95,3.8);

\draw[-,ultra thick,color=\greencol] (4.2,5.55) -- (4.7,5.55);
\draw[-,ultra thick,color=\greencol] (4.2,5.65) -- (4.7,5.65);

\end{tikzpicture}
\caption{}\label{fig2}
\end{figure}

\begin{clm} \label{clm:top}
$\begin{bmatrix}f(aac)&g(aac)\\s_1&g(bac) \end{bmatrix} \in \Mast\alpha\beta$.
\end{clm}

\begin{proof}
Glue $B_3$ to $C_2$ horizontally; then combine the resulting matrix with
$B_1$, $C_1$ and $C_3$ vertically; and finally glue this last matrix to
$B_2$ horizontally.
\end{proof}

We now follow similar steps to find $\bar{s}_1\in A$ such that
$\begin{bmatrix}\bar{s}_1&g(bac)\\f(bbd)&g(bbd)\end{bmatrix}
\in \Mast\alpha\beta$.
The following are in $M(\alpha,\beta)$,
as can be shown by the same reasoning used to show
that $B_1$ and $B_2$ are in $M(\alpha,\beta)$:
\[
B_4 := \begin{bmatrix} f(bac) & f(dac)\\f(bbd) & f(dbd) \end{bmatrix}
\qquad\qquad
B_5 := \begin{bmatrix} g(dac) & g(bac)\\g(dbd) & g(bbd) \end{bmatrix}.
\]
The following is a generator of $M(\alpha,\beta)$,
as can be shown by the same reasoning used to show
that $G_1$ is a generator:
\[
G_2 :=\begin{bmatrix} f(dcc) & f(dcc)\\g(dcc) & g(dcc) \end{bmatrix}.
\]
Also recall the matrices $G_1,H_1 \in \Mast\alpha\beta$ defined earlier.
Now apply $f$ coordinatewise to $H_1,G_1,G_2$ to get
\[
B_6' := f(H_1,G_1,G_2) = \begin{bmatrix} 
  f{\circ}f \begin{pmatrix} baa\\[-1mm] baa\\[-1mm] dcc\end{pmatrix} 
& f{\circ}f \begin{pmatrix} dcc\\[-1mm] baa\\[-1mm] dcc\end{pmatrix} \\
\rule{0mm}{10mm}
  f{\circ}g \begin{pmatrix} baa\\[-1mm] baa\\[-1mm] dcc\end{pmatrix} 
& f{\circ}g \begin{pmatrix} dcc\\[-1mm] baa\\[-1mm] dcc\end{pmatrix} 
\end{bmatrix}.
\] 
Finally, we let $B_6$ be the matrix obtained from $B_6'$ be swapping its
top row with its bottom row.  
$\Mast\alpha\beta$ is closed under the action of swapping rows, since
$M(\alpha,\beta)$ is, so $B_6 \in \Mast\alpha\beta$.

Observe that the upper-left entry of $B_4$ equals the lower-left
entry of $B_1$, which we have named $r_0$.  Similarly, the two entries
in the first column of $B_6$ equal the two entries of the first column
of $B_3$ but in reverse order (these are $r$ and $s$).

Define a few more new names for entries of $B_4,B_5,B_6$ as follows:
\begin{align*}
B_4 &= \begin{bmatrix} f(bac) & f(dac)\\f(bbd) & f(dbd) \end{bmatrix}
= \begin{bmatrix} r_0 & \bart_0\\ f(bbd) & f(dbd)\end{bmatrix}\\
B_5 &= \begin{bmatrix} g(dac) & g(bac)\\g(dbd) & g(bbd) \end{bmatrix}
= \begin{bmatrix} \baru_0 & g(bac)\\ g(dbd) & g(bbd)\end{bmatrix}
\end{align*}

\begin{align*}
B_6 &= \begin{bmatrix} 
  f{\circ}g \begin{pmatrix} baa\\[-1mm] baa\\[-1mm] dcc\end{pmatrix} 
& f{\circ}g \begin{pmatrix} dcc\\[-1mm] baa\\[-1mm] dcc\end{pmatrix} \\
\rule{0mm}{10mm}
  f{\circ}f \begin{pmatrix} baa\\[-1mm] baa\\[-1mm] dcc\end{pmatrix} 
& f{\circ}f \begin{pmatrix} dcc\\[-1mm] baa\\[-1mm] dcc\end{pmatrix} 
\end{bmatrix} 
= \begin{bmatrix} s & \baru\\ r& \bart\end{bmatrix}.
\end{align*}

These matrices ``fit roughly'' together as illustrated in Figure~\ref{fig3}.

\begin{figure}[h]
\begin{tikzpicture}

\node (faac) at (0.3,-5) {$f(bbd)$};
\node (r0) at (0,-3) {$r_0$};
\node (r) at (0,-2) {$r$};
\node (s) at (0,0) {$s$};
\node (t) at (3,-2) {$\bart$};
\node (u) at (3,0) {$\baru$};
\node (t0) at (3.5,-3.05) {$\bart_0$};
\node (faca) at (3.2,-5) {$f(dbd)$};

\node (gaca) at (5,-5) {$g(dbd)$};
\node (gaac) at (7.9,-5) {$g(bbd)$};
\node (u0) at (4.7,.5) {$\baru_0$};
\node (gbac) at (7.9,.5) {$g(bac)$};

\draw [-] (-.3,.2) -- (3.2,.2) -- (3.2,-2.2) -- (-.3,-2.2) -- cycle;
\draw [-] (4.4,.7) -- (8.5,.7) -- (8.5,-5.3) -- (4.4,-5.3) -- cycle;
\draw [-] (-.4,-2.8) -- (3.8,-2.8) -- (3.8,-5.3) -- (-.4,-5.3) -- cycle;

\node at (1.45,-1) {$B_6$};
\node at (6.45,-2.3) {$B_5$};
\node at (1.7,-4.05) {$B_4$};

\draw [-,very thick,snake=snake,segment amplitude=.4mm,segment length=2mm,color=red] (0,-2.2) -- (0,-2.8);

\draw [-,very thick,snake=snake,segment amplitude=.4mm,segment length=2mm,color=red] (3,-2.2) -- (3.5,-2.8);

\draw [-,very thick,snake=snake,segment amplitude=.4mm,segment length=2mm,color=red] (3.2,0) -- (4.4,.5);

\draw[-,ultra thick,color=\greencol] (3.8,-5.05) -- (4.4,-5.05);
\draw[-,ultra thick,color=\greencol] (3.8,-4.95) -- (4.4,-4.95);

\end{tikzpicture}

\caption{} \label{fig3}
\end{figure}

\begin{clm} \label{clm:thetabar}
$(\bart,\bart_0),(\baru,\baru_0) \in \theta$.
\end{clm}

\begin{proof}
Similar to the proof of Claim~\ref{clm:theta}.
\end{proof}

Now we repeat the process of adding matrices from $M(\theta,\beta)$
or $M(\alpha,\theta)$.  Because $(r,r_0) \in \theta$ (by Claim~\ref{clm:theta})
we can apply $p$ to 
$\bmat r{r_0}r{r_0}$, $\bmat {r_0}{r_0}{r_0}{r_0},
\bmat {r_0}{r_0}{\bart_0}{\bart_0} \in G(\theta,\beta)$ to
get
\[
C_4 := \begin{bmatrix}r&\bart_1\\r_0&\bart_0\end{bmatrix} \in M(\theta,\beta) \qquad\mbox{where
$\bart_1 = p(r,r_0,\bart_0)$.}
\]
In particular, we get that $(\bart_0,\bart_1) \in \theta$ so $(\bart,\bart_1) 
\in \theta$ by transitivity.

By a similar argument, applying $p$ to
$\bmat \bart\bart{\bart_1}{\bart_1},\bmat\bart{\bart_1}\bart{\bart_1},
\bmat\baru{\bart_1}\baru{\bart_1} \in G(\alpha,\theta)$
we get a matrix 
\[
C_5:= \begin{bmatrix}\baru&\baru_1\\ \bart&\bart_1\end{bmatrix} \in M(\alpha,\theta)\qquad
\mbox{where $\baru_1 = p(\bart_1,\bart,\baru)$}.
\]
We get $(\baru_0,\baru_1) \in \theta$,
so applying $p$ to
$\bmat {\baru_0}{\baru_1}{\baru_0}{\baru_1},\bmat {\baru_1}{\baru_1}
{\baru_1}{\baru_1},\bmat ss{\baru_1}{\baru_1} \in G(\theta,\beta)$
we get 
\[
C_6:= \begin{bmatrix}\bars_1&\baru_0\\s&\baru_1\end{bmatrix} \in M(\theta,\beta)\qquad
\mbox{where $\bars_1 = p(\baru_0,\baru_1,s)$}.
\]

\begin{figure}
\begin{tikzpicture}

\draw [-] (-.3,1.7) -- (4.2,1.7) -- (4.2,.7) -- (-.3,.7) -- cycle;
\draw [-] (-.3,.2) -- (2.7,.2) -- (2.7,-1.8) -- (-.3,-1.8) -- cycle;
\draw [-] (3.2,.2) -- (4.2,.2) -- (4.2,-1.8) -- (3.2,-1.8) -- cycle;
\draw [-] (-.3,-2.3) -- (4.2,-2.3) -- (4.2,-3.3) -- (-.3,-3.3) -- cycle;
\draw [-] (-.3,-3.8) -- (4.2,-3.8) -- (4.2,-5.8) -- (-.3,-5.8) -- cycle;
\draw [-] (4.7,1.7) -- (8.2,1.7) -- (8.2,-5.8) -- (4.7,-5.8) -- cycle;

\node at (1.95,1.2) {$C_6$};
\node at (1.2,-.8) {$B_6$};
\node at (3.7,-.8) {$C_5$};
\node at (1.95,-2.8) {$C_4$};
\node at (1.95,-4.8) {$B_4$};
\node at (6.45,-2.05) {$B_5$};

\node at (0,1.5) {$\bars_1$};
\node at (-.08,.9) {$s$};
\node at (4,1.5) {$\baru_0$};
\node at (4,.9) {$\baru_1$};
\node at (-.08,0) {$s$};
\node at (2.5,0) {$\baru$};
\node at (-.08,-1.6) {$r$};
\node at (2.5,-1.55) {$\bart$};
\node at (3.4,-1.55) {$\bart$};
\node at (3.4,0) {$\baru$};
\node at (4,0) {$\baru_1$};
\node at (4,-1.56) {$\bart_1$};
\node at (-.08,-2.5) {$r$};
\node at (0,-3.1) {$r_0$};
\node at (4,-2.55) {$\bart_1$};
\node at (4,-3.1) {$\bart_0$};
\node at (4,-4.05) {$\bart_0$};
\node at (0,-4) {$r_0$};
\node at (.35,-5.55) {$f(bbd)$};
\node at (3.55,-5.55) {$f(dbd)$};
\node at (5.35,-5.55) {$g(dbd)$};
\node at (7.55,-5.55) {$g(bbd)$};
\node at (7.55,1.45) {$g(bac)$};
\node at (4.93,1.5) {$\baru_0$};

\draw[-,ultra thick,color=\greencol] (4.2,1.55) -- (4.7,1.55);
\draw[-,ultra thick,color=\greencol] (4.2,1.45) -- (4.7,1.45);
\draw[-,ultra thick,color=\greencol] (4.05,.7) -- (4.05,.2);
\draw[-,ultra thick,color=\greencol] (3.95,.7) -- (3.95,.2);

\draw[-,ultra thick,color=\greencol] (.05,.7) -- (.05,.2);
\draw[-,ultra thick,color=\greencol] (-.05,.7) -- (-.05,.2);
\draw[-,ultra thick,color=\greencol] (2.7,.05) -- (3.2,.05);
\draw[-,ultra thick,color=\greencol] (2.7,-.05) -- (3.2,-.05);
\draw[-,ultra thick,color=\greencol] (2.7,-1.55) -- (3.2,-1.55);
\draw[-,ultra thick,color=\greencol] (2.7,-1.65) -- (3.2,-1.65);

\draw[-,ultra thick,color=\greencol] (.05,-1.8) -- (.05,-2.3);
\draw[-,ultra thick,color=\greencol] (-.05,-1.8) -- (-.05,-2.3);
\draw[-,ultra thick,color=\greencol] (4.05,-1.8) -- (4.05,-2.3);
\draw[-,ultra thick,color=\greencol] (3.95,-1.8) -- (3.95,-2.3);

\draw[-,ultra thick,color=\greencol] (.05,-3.3) -- (.05,-3.8);
\draw[-,ultra thick,color=\greencol] (-.05,-3.3) -- (-.05,-3.8);
\draw[-,ultra thick,color=\greencol] (4.05,-3.3) -- (4.05,-3.8);
\draw[-,ultra thick,color=\greencol] (3.95,-3.3) -- (3.95,-3.8);

\draw[-,ultra thick,color=\greencol] (4.2,-5.55) -- (4.7,-5.55);
\draw[-,ultra thick,color=\greencol] (4.2,-5.65) -- (4.7,-5.65);

\end{tikzpicture}
\caption{}\label{fig4}
\end{figure}

The matrices $C_4,C_5,C_6$ fit perfectly with $B_4,B_5,B_6$ as shown in
Figure~\ref{fig4}.  This proves

\begin{clm} \label{clm:bot}
$\begin{bmatrix} \bars_1 & g(bac)\\ f(bbd) & g(bbd) \end{bmatrix}
\in \Mast\alpha\beta$.
\end{clm}

Now for our final bit of magic, we will show

\begin{clm} \label{clm:sequal}
$s_1=\bars_1$.
\end{clm}

\begin{proof}
First, we define four more matrices in $M(\alpha,\beta)$:
\begin{align*}
B_7 &= \begin{bmatrix} f(bca) & f(baa)\\f(bcb) & f(bab) \end{bmatrix}
&B_9 = \begin{bmatrix} f(dcc) & f(dac)\\f(dcd) & f(dad) \end{bmatrix}
\phantom{.}\\
B_8 &= \rule{0mm}{6mm}\begin{bmatrix} g(bcb) & g(bab)\\g(bca) & g(baa) \end{bmatrix}
&B_{10} = \begin{bmatrix} g(dcd) & g(dad)\\g(dcc) & g(dac) \end{bmatrix}.
\end{align*}
With the matrix $H_1$ defined earlier, these matrices fit perfectly
as shown in Figure~\ref{fig5}.

\begin{figure}
\begin{tikzpicture}

\draw[-] (-.3,-.2) -- (2.7,-.2) -- (2.7,1.8) -- (-.3,1.8) -- cycle;
\draw[-] (-.3,2.3) -- (2.7,2.3) -- (2.7,4.3) -- (-.3,4.3) -- cycle;
\draw[-] (6.7,-.2) -- (9.7,-.2) -- (9.7,1.8) -- (6.7,1.8) -- cycle;
\draw[-] (6.7,2.3) -- (9.7,2.3) -- (9.7,4.3) -- (6.7,4.3) -- cycle;
\draw[-] (3.2,-.2) -- (6.2,-.2) -- (6.2,4.3) -- (3.2,4.3) -- cycle;

\node at (1.2,.8) {$B_8$};
\node at (1.2,3.3) {$B_7$};
\node at (8.2,.8) {$B_{10}$};
\node at (8.2,3.3) {$B_9$};
\node at (4.7,2.05) {$H_1$};

\node at (.3,4.0) {$f(bca)$};
\node at (2.1,4.0) {$f(baa)$};
\node at (.3,2.6) {$f(bcb)$};
\node at (2.1,2.6) {$f(bab)$};

\node at (.3,1.5) {$g(bcb)$};
\node at (2.1,1.5) {$g(bab)$};
\node at (.3,0.1) {$g(bca)$};
\node at (2.1,0.1) {$g(baa)$};

\node at (.3,4.0) {$f(bca)$};
\node at (2.1,4.0) {$f(baa)$};
\node at (.3,2.6) {$f(bcb)$};
\node at (2.1,2.6) {$f(bab)$};

\node at (7.3,1.5) {$g(dcd)$};
\node at (9.1,1.5) {$g(dad)$};
\node at (7.3,0.1) {$g(dcc)$};
\node at (9.1,0.1) {$g(dac)$};

\node at (7.3,4.0) {$f(dcc)$};
\node at (9.1,4.0) {$f(dac)$};
\node at (7.3,2.6) {$f(dcd)$};
\node at (9.1,2.6) {$f(dad)$};

\node at (3.8,0.1) {$g(baa)$};
\node at (5.6,0.1) {$g(dcc)$};
\node at (3.8,4.0) {$f(baa)$};
\node at (5.6,4.0) {$f(dcc)$};

\draw[-,ultra thick,color=\greencol] (2.7,.05) -- (3.2,.05);
\draw[-,ultra thick,color=\greencol] (2.7,.15) -- (3.2,.15);
\draw[-,ultra thick,color=\greencol] (2.7,4.05) -- (3.2,4.05);
\draw[-,ultra thick,color=\greencol] (2.7,3.95) -- (3.2,3.95);

\draw[-,ultra thick,color=\greencol] (6.7,.05) -- (6.2,.05);
\draw[-,ultra thick,color=\greencol] (6.7,.15) -- (6.2,.15);
\draw[-,ultra thick,color=\greencol] (6.7,4.05) -- (6.2,4.05);
\draw[-,ultra thick,color=\greencol] (6.7,3.95) -- (6.2,3.95);

\draw[-,ultra thick,color=\greencol] (0.25,1.8) -- (0.25,2.3);
\draw[-,ultra thick,color=\greencol] (0.35,1.8) -- (0.35,2.3);
\draw[-,ultra thick,color=\greencol] (2.05,1.8) -- (2.05,2.3);
\draw[-,ultra thick,color=\greencol] (2.15,1.8) -- (2.15,2.3);

\draw[-,ultra thick,color=\greencol] (7.25,1.8) -- (7.25,2.3);
\draw[-,ultra thick,color=\greencol] (7.35,1.8) -- (7.35,2.3);
\draw[-,ultra thick,color=\greencol] (9.05,1.8) -- (9.05,2.3);
\draw[-,ultra thick,color=\greencol] (9.15,1.8) -- (9.15,2.3);

\end{tikzpicture}
\caption{}\label{fig5}
\end{figure}

Thus we get 
\begin{equation} \label{eq:1}
\begin{bmatrix} t_0 & \bart_0\\u_0 & \baru_0\end{bmatrix}=
\begin{bmatrix} f(bca) & f(dac)\\
g(bca) &  g(dac) \end{bmatrix} \in \Mast\alpha\beta.
\end{equation}

Recall that the following matrices are in $\Mast\alpha\beta$:
\[
B_3 = \begin{bmatrix} r&t\\s&u\end{bmatrix}
\qquad\mbox{and}\qquad
B_6' = \begin{bmatrix} r & \bart\\s & \baru\end{bmatrix}.
\]
If we swap the two columns of $B_3$, we can horizontally glue the resulting 
matrix to $B_6'$, getting
\begin{equation} \label{eq:2}
\begin{bmatrix} t & \bart\\u & \baru\end{bmatrix} \in \Mast\alpha\beta.
\end{equation}

It follows from \eqref{eq:1} and \eqref{eq:2} that we can apply
$p(x_1,p(x_2,x_3,x_4),x_5)$ to the matrices
\[
\begin{bmatrix} u_0&\baru_0\\u_0&\baru_0 \end{bmatrix},\quad
\begin{bmatrix} t_0&\bart_0\\u_0&\baru_0 \end{bmatrix},\quad
\begin{bmatrix} t&\bart\\u&\baru \end{bmatrix},\quad
\begin{bmatrix} u&\baru\\u&\baru \end{bmatrix},\quad
\begin{bmatrix} s&s\\s&s \end{bmatrix}
\]
and the resulting matrix 
\[
\begin{bmatrix}
p(u_0,p(t_0,t,u),s) & \rule{2mm}{0mm} & p(\baru_0,p(\bart_0,\bart,\baru),s)\\
p(u_0,p(u_0,u,u),s) && p(\baru_0,p(\baru_0,\baru,\baru),s)
\end{bmatrix}
\]
is in $\Mast\alpha\beta$.  Observe that $(u,u_0),(\baru,\baru_0)\in\theta$
by Claims~\ref{clm:theta} and~\ref{clm:thetabar}.  Hence the bottom entries
of the above matrix are both equal to $s$.  By Lemma~\ref{lm:C2*}(1) we get
that the top entries are also equal, i.e.,
\begin{equation}
p(u_0,p(t_0,t,u),s) = p(\baru_0,p(\bart_0,\bart,\baru),s).
\end{equation}
Let's rewrite this last equation as
\begin{equation} \label{eq:4}
p(u_0,p(\underbrace{p(r,\underline{r},t_0)}_{=\,t_0},t,u),s) = 
p(\baru_0,p(\underbrace{p(r,\underline{r},\bart_0)}_{=\,\bart_0},\bart,\baru),s).
\end{equation}
Observe that
\[
t \stackbeta r \stackbeta \bart \qquad\mbox{and}\qquad
u \stackbeta s \stackbeta \baru.
\]
Since $(t_0,t),(\bart,\bart_0),(u_0,u),(\baru,\baru_0) \in \theta$, we get
that
each of the pairs $(u_0,\baru_0)$, $(t_0,\bart_0)$, $(t,\bart), (u,\baru)$ is
in $\beta$.  As $(r,r_0) \in \theta\subseteq\alpha$, the condition
$[\alpha,\beta]=0$ allows us to change the two underlined occurrences of $r$
in equation~\eqref{eq:4} to $r_0$, producing
\begin{equation}
p(u_0,p(p(r,r_0,t_0),t,u),s) = 
p(\baru_0,p(p(r,r_0,\bart_0),\bart,\baru),s).
\end{equation}
Now
\begin{align*}
p(u_0,p(p(r,r_0,t_0),t,u),s) &=  p(u_0,p(t_1,t,u),s) &\mbox{definition of $t_1$}\\
&= p(u_0,u_1,s) &\mbox{definition of $u_1$}\\
&= s_1 &\mbox{definition of $s_1$}
\end{align*}
and 
$p(\baru_0,p(p(r,r_0,\bart_0),\bart,\baru),s)$ similarly simplifies to
$\bars_1$.  Thus $s_1=\bars_1$, proving Claim~\ref{clm:sequal}.
\end{proof}

Now we can finish the proof of Theorem~\ref{thm:laststep}.  
By Claims~\ref{clm:top}, \ref{clm:bot} and \ref{clm:sequal}, we have
matrices
\[
\begin{bmatrix}f(aac) & g(aac)\\s_1 & g(bac)\end{bmatrix},
\begin{bmatrix}s_1 & g(bac)\\f(bbd) & g(bbd)\end{bmatrix} \in 
\Mast\alpha\beta.
\]
We can glue the first matrix on top of the second to get
\[
\begin{bmatrix} f(aac) & g(aac)\\f(bbd) & g(bbd) \end{bmatrix} 
\in \Mast\alpha \beta.
\]
As $\m a,\alpha,\beta,\begin{bmatrix}a&c\\b&d\end{bmatrix}$ were arbitrary,
this proves
\[
L_{fg} = \begin{bmatrix} f(xxz) & g(xxz)\\f(yyw) & g(yyw) \end{bmatrix} \in T.
\qedhere
\]
\end{proof}

\section{Proof of Theorem~\ref{thm:main}} \label{section:assembly}

\begin{proof}[Proof of Theorem~\ref{thm:main}]
Given that $\m a \in \varV$ and $[\alpha,\beta]=0$, we must show that 
$q$ restricted to $R(\alpha,\beta)$ is a homomorphism
$R(\alpha,\beta)\ra \m a$.
Let $s$ be an $n$-ary term and $\begin{bmatrix}a_i&c_i\\b_i&d_i\end{bmatrix}
\in R(\alpha,\beta)$ for $i=1,\ldots,n$.  Applying $s$ gives
\[
\begin{bmatrix}s(\tup{a}) & s(\tup{c})\\s(\tup{b}) & s(\tup{d})
\end{bmatrix} \in R(\alpha,\beta).
\]
For $i=1,\ldots,n$ define $c_i' = q(a_i,b_i,c_i,d_i)$.
Applying 
Lemma~\ref{lm:crucial} to the $n+1$ matrices
in $R(\alpha,\beta)$ at hand gives

\begin{align}
&\begin{bmatrix}a_i&c_i'\\b_i&d_i\end{bmatrix} \in \Mast\alpha\beta,
\quad(i=1,\ldots,n) \label{eq1}\\
&\begin{bmatrix}s(\tup{a})&
q(s(\tup{a}),s(\tup{b}),s(\tup{c}),s(\tup{d}))\\s(\tup{b}) & s(\tup{d})
\end{bmatrix} \in \Mast\alpha\beta\label{eq2}.
\end{align}

Applying $s$ to the $n$ matrices in \eqref{eq1} and using 
Lemma~\ref{lm:subalg}, we get
\[
\begin{bmatrix} s(\tup{a})&
s(c_1',\ldots,c_n')\\s(\tup{b})&s(\tup{d})\end{bmatrix} \in \Mast\alpha\beta,
\]
which with \eqref{eq2} and Lemma~\ref{lm:C2*}(2) gives
\[
s(c_1',\ldots,c_n') = q(s(\tup{a}),s(\tup{b}),s(\tup{c}),s(\tup{d}))
\]
as required.
\end{proof}

\section{Proof of Lemma~6.2 of \cite{ksw2016}} \label{sec:Lm6.2}

In this section 
we prove Lemma~6.2 of \cite{ksw2016}.
First, we need the following fact about arbitrary Kiss terms.
(Arbitrary Kiss terms were defined by (I)${}_q$ and (II)${}_q$
of the Introduction.)

\begin{lm} \label{lm:indep}
Let $\varV$ be a variety with a Kiss term $q(x,y,z,w)$.
If $\m a\in\varV, \alpha, \beta\in\Con(\m a)$,
and $[\alpha,\beta]=0$,
then
$q(x,y,z,w)$ is independent of its third variable
on $R(\alpha,\beta)$.
\end{lm}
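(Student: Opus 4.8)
The plan is to observe that this lemma is an immediate consequence of the defining property (II)$_q$ of a Kiss term, once one uses the hypothesis $[\alpha,\beta]=0$. Recall that the assertion ``$q$ is independent of its third variable on $R(\alpha,\beta)$'' means precisely that $q(a,b,c,d)=q(a,b,c',d)$ whenever both $(a,b,c,d)$ and $(a,b,c',d)$ lie in $R(\alpha,\beta)$.

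So first I would fix $\m a\in\varV$ and $\alpha,\beta\in\Con(\m a)$ with $[\alpha,\beta]=0$, and take arbitrary tuples $(a,b,c,d),(a,b,c',d)\in R(\alpha,\beta)$. Applying property (II)$_q$ of the Kiss term $q$ to these two tuples yields $q(a,b,c,d)\stackrel{[\alpha,\beta]}{\equiv}q(a,b,c',d)$. Since $[\alpha,\beta]=0$ is the identity congruence on $\m a$, the relation $\stackrel{[\alpha,\beta]}{\equiv}$ is just equality, whence $q(a,b,c,d)=q(a,b,c',d)$. As the two tuples were arbitrary elements of $R(\alpha,\beta)$ agreeing in all but possibly their third coordinate, this is exactly the claim that $q$ is independent of its third variable on $R(\alpha,\beta)$.

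There is no real obstacle here: the lemma is essentially (II)$_q$ specialized to the case $[\alpha,\beta]=0$, recorded separately because it supplies item~(ii) in the restatement of Lemma~6.2 of \cite{ksw2016} for an arbitrary Kiss term, to be combined with the arbitrary-Kiss-term extension of Theorem~\ref{thm:main} proved in Section~\ref{section:refinements}.
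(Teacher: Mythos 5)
Your proof is correct and is essentially identical to the paper's: both invoke property (II)$_q$ to get $q(a,b,c,d)\stackrel{[\alpha,\beta]}{\equiv}q(a,b,c',d)$ and then use $[\alpha,\beta]=0$ to conclude equality.
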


\begin{proof}
It follows from (II)${}_q$ of the definition of a Kiss term
that if
$(a,b,c,d)$, $(a,b,c',d)\in R(\alpha,\beta)$, then
\[
q(a,b,c,d) \stackrel{[\alpha,\beta]}{\equiv} q(a,b,c',d).
\]
Since we are assuming $[\alpha,\beta]=0$,
we obtain that
$q(a,b,c,d)=q(a,b,c',d)$
whenever $(a,b,c,d), (a,b,c',d)\in R(\alpha,\beta)$,
which is the claim of the lemma.
\end{proof}

Now 
let $\varV$ be a variety having a Kiss term $q$ which has been 
constructed from
a 3-ary difference term via Lipparini's Formula.  In this context,
Lemma~6.2 of \cite{ksw2016} states the following.

\medskip\noindent
\textbf{Lemma 6.2 of \cite{ksw2016}.} 
\emph{If $\m a \in \varV$ and $\alpha,\beta \in \Con(\m a)$, then
$[\alpha,\beta]=0$ iff 
\begin{enumerate}
\item[(i)]
$q\colon R(\alpha,\beta)\ra \m a$ is a homomorphism, and
\item[(ii)]
$q$ is independent of its third variable on $R(\alpha,\beta)$.
\end{enumerate}
}

\medskip
In the following proof, note 
that the requirement that $q$ be constructed from a 3-ary difference term 
via Lipparini's Formula is used only at the point where 
Theorem~\ref{thm:main} is called.

\begin{proof}[Proof of Lemma~6.2 of \cite{ksw2016}]
Assume that $[\alpha,\beta]=0$.  
Item (ii) holds by Lemma~\ref{lm:indep}.
Item (i) holds by Theorem~\ref{thm:main}, using our assumption on $q$.

Conversely, assume that Items (i) and (ii) hold for
some Kiss term $q$. Recall that $M(\alpha,\beta)$
is the subalgebra of $R(\alpha,\beta)$ generated by
\[
G(\alpha,\beta) :=
\left\{ \begin{bmatrix} c&c\\d&d\end{bmatrix}
: (c,d) \in \alpha\right\}
\cup
\left\{
\begin{bmatrix} a&c\\a&c\end{bmatrix}
: (a,c) \in \beta\right\}.
\]
It follows from the Kiss identities (I)${}_q$
that $q(a,b,c,d)=c$ if
$\begin{bmatrix} a&c\\b&d\end{bmatrix}\in G(\alpha,\beta)$.
Assuming Item (i) of the lemma statement,  
$q\colon R(\alpha,\beta)\to \m a$ is a homomorphism.
This homomorphism agrees with the third projection homomorphism
$\pi_3(a,b,c,d)=c$
on $G(\alpha,\beta)$.
Since $q=\pi_3$ on $G(\alpha,\beta)$,
and both $q$ and $\pi_3$ are homomorphisms,
we have $q=\pi_3$ on the generated subalgebra
$\langle G(\alpha,\beta)\rangle=M(\alpha,\beta)$.
To repeat this statement more directly, 
$q(a,b,c,d)=c$ whenever
$\begin{bmatrix} a&c\\b&d\end{bmatrix}\in M(\alpha,\beta)$.

We claim that $b=d$ implies $a=c$ whenever
$\begin{bmatrix} a&c\\b&d\end{bmatrix}\in M(\alpha,\beta)$.
For, if $b=d$ for some
$\begin{bmatrix} a&c\\b&d\end{bmatrix}\in M(\alpha,\beta)$,
  then
$\begin{bmatrix} a&c\\d&d\end{bmatrix}, \begin{bmatrix} a&a\\d&d\end{bmatrix}
\in M(\alpha,\beta)\leq R(\alpha,\beta)$.
Now we use Item (ii) of the lemma statement together with
identities (I)${}_q$ from the definition of a Kiss term to conclude that
$q(a,d,c,d)=q(a,d,a,d)=a$. Yet
$q(a,d,c,d)=c$ by the conclusion of the preceding paragraph.
This shows that $a=c$, as claimed.

We have shown that $b=d$ implies $a=c$ whenever
$\begin{bmatrix} a&c\\b&d\end{bmatrix}\in M(\alpha,\beta)$,
  which is one of the standard
  definitions of the expression ``$[\alpha,\beta]=0$''.
\end{proof}

\section{Refinements and extensions}
\label{section:refinements}

We have shown that, given a 3-ary difference term $p(x,y,z)$ for a variety,
the 4-ary Kiss term 
\[
p(p(x,z,z),p(y,w,z),z)
\]
obtained from $p$ via Lipparini's Formula satisfies certain 
properties (see our Theorem~\ref{thm:main} and  Lemma~6.2 of \cite{ksw2016}).
In this section we will see that
Theorem~\ref{thm:main} and  Lemma~6.2 of \cite{ksw2016}
hold for arbitrary Kiss terms, and not just those
defined by Lipparini's Formula.
This will be accomplished by showing that in a variety with a difference term,
if $[\alpha,\beta]=0$ then all Kiss terms for the variety agree on 
$R(\alpha,\beta)$.  
When $\alpha\leq \beta$, we refine this last result and connect it to
3-ary difference terms for the variety.
Next, we extend results in earlier sections by explaining the effect of
eliminating the hypothesis ``$[\alpha,\beta]=0$''.
Finally, we show that a result of Moorhead for
pairs of comparable congruences in 
congruence meet-semidistributive varieties is true for
arbitrary pairs of congruences.

\begin{thm} \label{thm:indep}
If $q$ and $q'$ are Kiss terms
for $\varV$, $\alpha, \beta\in\Con(\m a), \m a\in\varV$,
and $[\alpha,\beta]=0$, 
then $q$ and $q'$ agree on $R(\alpha,\beta)$.
\end{thm}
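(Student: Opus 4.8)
The plan is to reduce Theorem~\ref{thm:indep} to the single assertion that, when $[\alpha,\beta]=0$, every Kiss term $q$ of $\varV$ acts as the third-coordinate projection on $\Mast\alpha\beta$, i.e.\ $q(a,b,c,d)=c$ for every $\begin{bmatrix}a&c\\b&d\end{bmatrix}\in\Mast\alpha\beta$; Lemma~\ref{lm:crucial} then carries this back to all of $R(\alpha,\beta)$. A variety with a Kiss term automatically has a difference term (as noted in the Introduction), so Lemmas~\ref{lm:subalg}, \ref{lm:C2*}, \ref{lm:crucial} and~\ref{lm:indep} are all available; I fix $\m a\in\varV$, $\alpha,\beta\in\Con(\m a)$ with $[\alpha,\beta]=0$, a difference term $p$ for $\varV$, and the Kiss term $q_L$ obtained from $p$ via~\eqref{LipForm}.

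For the reduction I would argue as follows. Given a Kiss term $q$ and $\begin{bmatrix}a&c\\b&d\end{bmatrix}\in\Mast\alpha\beta$, the inclusion $\Mast\alpha\beta\subseteq R(\alpha,\beta)$ gives $(c,d)\in\alpha$ and $(b,d)\in\beta$, so the matrices $\begin{bmatrix}b&b\\b&b\end{bmatrix}$, $\begin{bmatrix}c&c\\d&d\end{bmatrix}$, $\begin{bmatrix}d&b\\d&b\end{bmatrix}$ all lie in $G(\alpha,\beta)\subseteq\Mast\alpha\beta$. Since $\Mast\alpha\beta$ is a subalgebra of $\m a^{2\times 2}$ (Lemma~\ref{lm:subalg}), applying $q$ coordinatewise to the four matrices $\begin{bmatrix}a&c\\b&d\end{bmatrix}$, $\begin{bmatrix}b&b\\b&b\end{bmatrix}$, $\begin{bmatrix}c&c\\d&d\end{bmatrix}$, $\begin{bmatrix}d&b\\d&b\end{bmatrix}$ (in that order) produces a matrix of $\Mast\alpha\beta$ whose top-right, bottom-left and bottom-right entries evaluate, by the Kiss identities $q(x,y,x,y)\approx x$ and $q(x,x,y,y)\approx y$, to $c$, $d$ and $d$ respectively; so this matrix is $\begin{bmatrix}q(a,b,c,d)&c\\d&d\end{bmatrix}$. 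Its two bottom entries agree, so Lemma~\ref{lm:C2*}(1) forces its two top entries to agree, whence $q(a,b,c,d)=c$.

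Granting this, Theorem~\ref{thm:indep} follows quickly. For $(a,b,c,d)\in R(\alpha,\beta)$ and a Kiss term $q$, set $c':=q_L(a,b,c,d)$. Lemma~\ref{lm:crucial} gives $\begin{bmatrix}a&c'\\b&d\end{bmatrix}\in\Mast\alpha\beta$, hence $(a,b,c',d)\in R(\alpha,\beta)$; Lemma~\ref{lm:indep} then gives $q(a,b,c,d)=q(a,b,c',d)$, and the assertion just proved, applied to $\begin{bmatrix}a&c'\\b&d\end{bmatrix}$, gives $q(a,b,c',d)=c'$. Thus $q(a,b,c,d)=c'=q_L(a,b,c,d)$, and the identical computation with a second Kiss term $q'$ in place of $q$ yields $q'(a,b,c,d)=c'$, so $q$ and $q'$ agree on $R(\alpha,\beta)$.

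I expect the only real difficulty to be the reduction step, not the surrounding bookkeeping. The naive route --- observe that $q$ and the third projection coincide on the generators of $M(\alpha,\beta)$ and are both homomorphisms, hence coincide everywhere --- is blocked, because an arbitrary Kiss term need not induce a homomorphism out of $M(\alpha,\beta)$, let alone out of $\Mast\alpha\beta$. The coordinatewise computation above circumvents this by invoking $q$ exactly once, on the given matrix together with three hand-picked generators, so that the Kiss identities annihilate three of the four output entries while Lemma~\ref{lm:C2*}(1) recovers the fourth. The only other ingredients are the elementary facts that $\Mast\alpha\beta$ is a subalgebra containing $G(\alpha,\beta)$ and that a variety with a Kiss term has a difference term.
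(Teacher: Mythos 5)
Your proposal is correct, and it takes a genuinely different route from the paper's. The paper proves the theorem by first invoking Theorem~\ref{thm:main} to conclude that the Lipparini Kiss term $q_L$ is a homomorphism $R(\alpha,\beta)\to\m a$, and then exploiting that fact via a ``commutativity trick'': applying the term $q'$ to a carefully arranged $4\times 4$ array of elements of $A$ whose rows and columns lie in $R(\alpha,\beta)$, and using that the homomorphism $q_L$ and the term operation $q'$ commute, to derive $q_L(a,b,q'(a,b,c,d),d)=q'(a,b,q_L(a,b,c,d),d)$; it then finishes with Lemma~\ref{lm:indep}. You instead bypass Theorem~\ref{thm:main} entirely and prove a self-contained assertion --- that every Kiss term $q$ acts as the third-coordinate projection on $\Mast\alpha\beta$ --- by a single coordinatewise application of $q$ to the given matrix and three hand-picked generators of $M(\alpha,\beta)$, using the Kiss identities to pin down three entries and Lemma~\ref{lm:C2*}(1) to pin down the fourth. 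The theorem then drops out from Lemmas~\ref{lm:crucial} and~\ref{lm:indep}. Your approach is arguably leaner: it needs only $[\alpha,\beta]=0$, the closure of $\Mast\alpha\beta$ under term operations, and the Kiss identities, rather than the full homomorphism property of $q_L$. It also proves, essentially for free, the ``$\subseteq$'' half of what the paper later records as Corollary~\ref{cor:Delta-q-p}(2); in the paper that corollary is derived as a consequence of Theorem~\ref{thm:indep}, whereas you obtain it first and use it as the engine of the proof. The paper's route has the virtue of showcasing how the homomorphism property from Theorem~\ref{thm:main} can be leveraged, but both proofs ultimately rest on the same hard ingredient, Lemma~\ref{lm:crucial}.
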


\begin{proof}
To show that any two Kiss terms agree on $R(\alpha,\beta)$
it suffices to choose 
a $3$-ary difference term $p$,
define the term $q$ from $p$
using Lipparini's Formula \rm{(\ref{LipForm})},
and then prove that any other Kiss term $q'$
agrees with this Lipparini-type
Kiss term $q$ on $R(\alpha,\beta)$.
Therefore, choose a 
$3$-ary difference term $p$
and let $q$ be given by Lipparini's Formula
applied to $p$.
Thus it follows from Theorem~\ref{thm:main}
that $q\colon R(\alpha,\beta)\to \m a$ is a homomorphism.
Let $q'$ be any other Kiss term.

Since $q$ is a homomorphism 
on $R(\alpha,\beta)$ and $q'$ is a term,
$q$ and $q'$ must commute on arrays of the form
\[
\begin{bmatrix}
a &b &a &b \\
b &b &b &b \\
a &b &c &d \\
b &b &d &d \\
\end{bmatrix}
\]
for all $(a,b,c,d)\in R(\alpha,\beta)$.
Here, all of the columns and rows
of this $4\times 4$ array belong to $R(\alpha,\beta)$.
The commutativity just claimed is the second equality in 
\begin{align*}
q(a,b,q'(a,b,c,d),d)&=
q(q'(a,b,a,b),q'(b,b,b,b),q'(a,b,c,d),q'(b,b,d,d))\\
&=
q'(q(a,b,a,b),q(b,b,b,b),q(a,b,c,d),q(b,b,d,d))\\
&=q'(a,b,q(a,b,c,d),d).
\end{align*}
The first and third equalities follow from the Kiss
identities (I)${}_q$.

Observe that for any Kiss term $q'$ we have 
$q'(a,b,c,d) \stackrel{\beta}{\equiv} q'(a,b,a,b) = a$
and
$q'(a,b,c,d) \stackrel{\alpha}{\equiv} q'(b,b,d,d) = d$.
This implies that if $(a,b,c,d)\in R(\alpha,\beta)$,
then $(a,b,q'(a,b,c,d),d)\in R(\alpha,\beta)$.
By Lemma~\ref{lm:indep} we obtain
\[
q(a,b,q'(a,b,c,d),d) = q(a,b,c,d).
\]
A similar argument establishes that
$q'(a,b,q(a,b,c,d),d) = q'(a,b,c,d)$.
Combining this with the final displayed lines
of the previous
paragraph we obtain
\[
q(a,b,c,d) =
q(a,b,q'(a,b,c,d),d) = 
q'(a,b,q(a,b,c,d),d) = 
q'(a,b,c,d),
\]
showing that $q$ and $q'$ agree on $R(\alpha,\beta)$.
\end{proof}

\begin{cor} \label{cor:indep} 
  The conclusion of Theorem~\ref{thm:main} holds
  for arbitrary Kiss terms.
\end{cor}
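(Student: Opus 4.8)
The plan is to obtain this as an immediate consequence of Theorems~\ref{thm:main} and~\ref{thm:indep}. Fix $\m a\in\varV$ and $\alpha,\beta\in\Con(\m a)$ with $[\alpha,\beta]=0$, and let $q'$ be an arbitrary Kiss term for $\varV$. Since a variety has a Kiss term precisely when it has a $3$-ary difference term (by Lipparini's Formula \eqref{LipForm}), we may choose a $3$-ary difference term $p$ for $\varV$ and form the associated Lipparini-type Kiss term $q=p(p(x,z,z),p(y,w,z),z)$. By Theorem~\ref{thm:main}, $q\colon R(\alpha,\beta)\to\m a$ is a homomorphism.

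Next I would apply Theorem~\ref{thm:indep} to the two Kiss terms $q$ and $q'$: since $[\alpha,\beta]=0$, they agree as functions on $R(\alpha,\beta)$. Hence the restriction of $q'$ to $R(\alpha,\beta)$ coincides with that of $q$, and as the latter is a homomorphism, so is the former. Because $\m a$, $\alpha$, $\beta$ (subject only to $[\alpha,\beta]=0$), and the Kiss term $q'$ were arbitrary, this says exactly that the conclusion of Theorem~\ref{thm:main} holds for every Kiss term, which is the assertion of the corollary.

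There is really no obstacle to overcome here: the substantive content has already been placed in Theorem~\ref{thm:main} (the homomorphism property for one chosen Kiss term) and in Theorem~\ref{thm:indep} (all Kiss terms coincide on $R(\alpha,\beta)$ when $[\alpha,\beta]=0$). The only point worth noting is that the statement ``$q'$ is a homomorphism $R(\alpha,\beta)\to\m a$'' depends only on the behavior of $q'$ as a function on $R(\alpha,\beta)$, so equality of $q$ and $q'$ as functions on $R(\alpha,\beta)$ is enough to transfer the property; one does not need $q$ and $q'$ to be equal as terms of $\varV$.
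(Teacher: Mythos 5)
Your proof is correct and follows exactly the paper's own argument: invoke Theorem~\ref{thm:main} for the Lipparini-type Kiss term $q$ built from a $3$-ary difference term, then use Theorem~\ref{thm:indep} to conclude that an arbitrary Kiss term $q'$ coincides with $q$ on $R(\alpha,\beta)$ and hence is the same homomorphism. Nothing further is needed.
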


\begin{proof}
Theorem~\ref{thm:main} proves that if $[\alpha,\beta]=0$,
then $q\colon R(\alpha,\beta)\to \m a$ is a homomorphism
when $q$ is the Kiss term constructed 
from a 3-ary difference term
by Lipparini's Formula.
Theorem~\ref{thm:indep} proves that if $q'$
is any other Kiss term for the same variety
and $[\alpha,\beta]=0$, then
$q=q'$ on $R(\alpha,\beta)$. Thus
$q'\colon R(\alpha,\beta)\to \m a$ is also a homomorphism (the same one).
\end{proof}

\begin{cor} \label{cor:indep2}
Lemma~6.2 of \cite{ksw2016} holds for arbitrary Kiss terms.
\end{cor}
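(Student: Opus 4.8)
Corollary \ref{cor:indep2} asserts that Lemma 6.2 of \cite{ksw2016} holds for arbitrary Kiss terms, not merely those arising from Lipparini's Formula. Let me plan the proof.

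The statement to prove is: if $\varV$ has a difference term, $\m a\in\varV$, and $\alpha,\beta\in\Con(\m a)$, and $q'$ is an arbitrary Kiss term for $\varV$, then $[\alpha,\beta]=0$ iff (i) $q'\colon R(\alpha,\beta)\ra\m a$ is a homomorphism and (ii) $q'$ is independent of its third variable on $R(\alpha,\beta)$.

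The plan is to reduce to the already-established version. For the forward direction, assume $[\alpha,\beta]=0$. Item (ii) for $q'$ is immediate from Lemma \ref{lm:indep}, which already applies to any Kiss term. For item (i), I would invoke Corollary \ref{cor:indep}, which states precisely that the conclusion of Theorem \ref{thm:main} — namely that $q'\colon R(\alpha,\beta)\ra\m a$ is a homomorphism — holds for arbitrary Kiss terms. So the forward direction is essentially a citation of two results already in hand.

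For the converse, suppose (i) and (ii) hold for the arbitrary Kiss term $q'$. Here I would like to run the argument from the proof of Lemma 6.2 of \cite{ksw2016} given in Section \ref{sec:Lm6.2} verbatim. Inspecting that proof, the only properties of $q$ it uses in the converse direction are the Kiss identities (I)${}_q$ together with the two hypotheses (i) and (ii); it never uses that $q$ came from Lipparini's Formula. (Indeed, the remark preceding that proof explicitly says the Lipparini hypothesis is used only where Theorem \ref{thm:main} is called, which is in the forward direction.) So the converse argument applies word-for-word to $q'$: one shows $q'=\pi_3$ on $M(\alpha,\beta)$ using (i) and (I)${}_{q'}$, then uses (ii) and (I)${}_{q'}$ to conclude that $b=d$ forces $a=c$ for matrices in $M(\alpha,\beta)$, which is one of the standard definitions of $[\alpha,\beta]=0$.

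The proof is therefore quite short: there is no genuine obstacle, since all the real work has been done in Theorem \ref{thm:main}, Corollary \ref{cor:indep}, Lemma \ref{lm:indep}, and the proof of Lemma 6.2 in Section \ref{sec:Lm6.2}. The only thing to be careful about is to point out explicitly that the converse half of the earlier proof of Lemma 6.2 did not depend on the form of the Kiss term, so that it transfers to $q'$ unchanged.

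\begin{proof}
Fix a variety $\varV$ with a difference term, an algebra $\m a\in\varV$, congruences $\alpha,\beta\in\Con(\m a)$, and an arbitrary Kiss term $q'$ for $\varV$.

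First suppose $[\alpha,\beta]=0$. Then $q'$ is independent of its third variable on $R(\alpha,\beta)$ by Lemma~\ref{lm:indep}, which gives item~(ii). By Corollary~\ref{cor:indep}, the conclusion of Theorem~\ref{thm:main} holds for the arbitrary Kiss term $q'$; that is, $q'\colon R(\alpha,\beta)\ra\m a$ is a homomorphism, which gives item~(i).

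Conversely, suppose items~(i) and~(ii) hold for $q'$. We may run the converse half of the proof of Lemma~6.2 of \cite{ksw2016} given in Section~\ref{sec:Lm6.2} verbatim with $q'$ in place of $q$: that argument uses only the Kiss identities (I)${}_{q'}$ together with hypotheses~(i) and~(ii), and in particular never uses that the Kiss term was obtained from a $3$-ary difference term via Lipparini's Formula. (As noted there, the Lipparini hypothesis is needed only where Theorem~\ref{thm:main} is invoked, which occurs only in the forward direction.) Explicitly: from~(i) and (I)${}_{q'}$ one gets that $q'$ agrees with the third projection $\pi_3$ on the generating set $G(\alpha,\beta)$ of $M(\alpha,\beta)$, hence on all of $M(\alpha,\beta)$; then for any $\begin{bmatrix}a&c\\b&d\end{bmatrix}\in M(\alpha,\beta)$ with $b=d$, both $\begin{bmatrix}a&c\\d&d\end{bmatrix}$ and $\begin{bmatrix}a&a\\d&d\end{bmatrix}$ lie in $M(\alpha,\beta)\leq R(\alpha,\beta)$, so using~(ii) and (I)${}_{q'}$ we get $c=q'(a,d,c,d)=q'(a,d,a,d)=a$. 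Thus $b=d$ implies $a=c$ for matrices in $M(\alpha,\beta)$, one of the standard definitions of $[\alpha,\beta]=0$.
\end{proof}
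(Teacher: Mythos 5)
Your proof is correct and follows the paper's own second suggested route: replace the appeal to Theorem~\ref{thm:main} in the Section~\ref{sec:Lm6.2} argument by Corollary~\ref{cor:indep}, and observe that the converse half of that argument uses only the Kiss identities together with hypotheses~(i) and~(ii). The paper gives this as a one-line remark (offering also the alternative of deducing the result from Lemma~6.2 plus Theorem~\ref{thm:indep}); you have simply written it out in full.
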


\begin{proof}
This can be deduced from Lemma~6.2 of \cite{ksw2016} 
(proved in Section~\ref{sec:Lm6.2})
using Theorem~\ref{thm:indep}, or by 
replacing the use of Theorem~\ref{thm:main} in our proof of Lemma~6.2
of~\cite{ksw2016} in Section~\ref{sec:Lm6.2} with Corollary~\ref{cor:indep}.
\end{proof}

Let $\varV$ be a variety with a difference term $p$,
let $q$ be an arbitrary Kiss term for $\varV$, and let $\m a\in\varV$.
Assume $\alpha,\beta\in\Con(\m a)$
satisfy $[\alpha,\beta]=0$.
We know from Lemma~\ref{lm:indep} that $q$ is independent of its third variable
on $R(\alpha,\beta)$.
The third variable corresponds to the top
right entry in the matrices in $R(\alpha,\beta)$.
Therefore, the restriction of $q$ to $R(\alpha,\beta)$ yields
a ternary function $q^-$ with codomain $A$ and domain
\begin{equation*}
  R^-(\alpha,\beta):=\left\{
  \begin{bmatrix}
    a & \\
    b & d
  \end{bmatrix}\in A^3 :
  \begin{bmatrix}
    a & c\\
    b & d
  \end{bmatrix}\in R(\alpha,\beta)
  \text{ for some } c\in A
  \right\}
\end{equation*}
such that $q^-(a,b,d)=q(a,b,c,d)$ for all
$\displaystyle
  \begin{bmatrix}
    a & c\\
    b & d
  \end{bmatrix}\in R(\alpha,\beta).$

\begin{thm} \label{thm:Delta-q-p}
Let $\varV$ be a variety with a difference term $p$,
let $q$ be an arbitrary Kiss term for $\varV$, and let $\m a\in\varV$.
Assume $\alpha,\beta\in\Con(\m a)$
satisfy $[\alpha,\beta]=0$. The function
$q^-$, defined above, has the following properties:
\begin{enumerate}
\item[{\rm(1)}]
  The relation $\Mast\alpha\beta$ is the graph of $q^-$; that is,
  \begin{equation}\label{eq:q-and-delta}
    \Mast\alpha\beta=\left\{
        \begin{bmatrix}
    a & q^-(a,b,d)\\
    b & d
    \end{bmatrix}:
    \begin{bmatrix}
    a & \\
    b & d
    \end{bmatrix}\in R^-(\alpha,\beta)
    \right\}.
  \end{equation}.
\item[{\rm(2)}]
  If $\alpha\le\beta$, then $q^-$ agrees with $p$ on $R^-(\alpha,\beta)$,
  hence 
  the relation $\Mast\alpha\beta$ is also the graph of $p$ restricted
  to $R^-(\alpha,\beta)$:
  \begin{equation}\label{eq:p-and-delta}
    \Mast\alpha\beta=\left\{
        \begin{bmatrix}
    a & p(a,b,d)\\
    b & d
    \end{bmatrix}:
    \begin{bmatrix}
    a & \\
    b & d
    \end{bmatrix}\in R^-(\alpha,\beta)
    \right\}.
  \end{equation}.
\end{enumerate}  
\end{thm}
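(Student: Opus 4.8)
The plan is to obtain both parts as essentially formal consequences of Lemma~\ref{lm:crucial}, Lemma~\ref{lm:C2*}(2), and Theorem~\ref{thm:indep}, with only a small extra computation for part~(2). The first thing I would do is reduce to a convenient Kiss term: by Theorem~\ref{thm:indep} all Kiss terms for $\varV$ agree on $R(\alpha,\beta)$ when $[\alpha,\beta]=0$, so the ternary function $q^-$ does not depend on the choice of Kiss term used to define it, and we may freely compute $q^-$ using the Kiss term $q_0$ obtained from the given difference term $p$ via Lipparini's Formula~(\ref{LipForm}), to which Lemma~\ref{lm:crucial} applies verbatim.

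For part~(1), the inclusion ``$\supseteq$'' in \eqref{eq:q-and-delta} is immediate: given $\begin{bmatrix}a&{}\\ b&d\end{bmatrix}\in R^-(\alpha,\beta)$, choose $c$ with $\begin{bmatrix}a&c\\ b&d\end{bmatrix}\in R(\alpha,\beta)$ and apply Lemma~\ref{lm:crucial} to get $\begin{bmatrix}a&q_0(a,b,c,d)\\ b&d\end{bmatrix}=\begin{bmatrix}a&q^-(a,b,d)\\ b&d\end{bmatrix}\in\Mast\alpha\beta$. For ``$\subseteq$'', take any $\begin{bmatrix}a&c\\ b&d\end{bmatrix}\in\Mast\alpha\beta\subseteq R(\alpha,\beta)$; Lemma~\ref{lm:crucial} (with this very $c$) produces $\begin{bmatrix}a&q^-(a,b,d)\\ b&d\end{bmatrix}\in\Mast\alpha\beta$, and Lemma~\ref{lm:C2*}(2) then forces $c=q^-(a,b,d)$. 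This proves \eqref{eq:q-and-delta}.

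For part~(2), assume $\alpha\le\beta$ and set $\theta=\alpha\cap\beta=\alpha$; since $[\theta,\theta]=[\alpha,\alpha]\le[\alpha,\beta]=0$, $\theta$ is abelian. Fix $\begin{bmatrix}a&{}\\ b&d\end{bmatrix}\in R^-(\alpha,\beta)$, so $(a,b)\in\alpha=\theta$ and $(b,d)\in\beta$. Because $p$ is a difference term and $(a,b)\in\theta$ with $\theta$ abelian, we have $p(a,b,b)=a$, while $p(b,b,b)=b$ and $p(b,b,d)=d$ by the identity $p(x,x,y)\approx y$. The three matrices $\begin{bmatrix}a&a\\ b&b\end{bmatrix}$, $\begin{bmatrix}b&b\\ b&b\end{bmatrix}$, $\begin{bmatrix}b&d\\ b&d\end{bmatrix}$ all lie in $M(\alpha,\beta)$ (the first because $(a,b)\in\alpha$, the third because $(b,d)\in\beta$), so applying $p$ to them coordinatewise gives
\[
\begin{bmatrix}p(a,b,b)&p(a,b,d)\\ p(b,b,b)&p(b,b,d)\end{bmatrix}
=\begin{bmatrix}a&p(a,b,d)\\ b&d\end{bmatrix}
\in M(\alpha,\beta)\subseteq\Mast\alpha\beta.
\]
Comparing this with the matrix $\begin{bmatrix}a&q^-(a,b,d)\\ b&d\end{bmatrix}\in\Mast\alpha\beta$ from part~(1) and applying Lemma~\ref{lm:C2*}(2) yields $p(a,b,d)=q^-(a,b,d)$. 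Hence $q^-$ agrees with $p$ on $R^-(\alpha,\beta)$, and \eqref{eq:p-and-delta} follows from \eqref{eq:q-and-delta}.

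I do not anticipate a real obstacle: the substantive content lies entirely in Lemma~\ref{lm:crucial} (already proved), after which the statement falls out by the standard ``apply $p$ or $q$ to generators and cancel via Lemma~\ref{lm:C2*}(2)'' technique. The only points needing care are (a) noting up front that $q^-$ is well defined and independent of the Kiss term (so that the reduction to $q_0$ is legitimate), and (b) in part~(2), observing that $\alpha\le\beta$ is precisely what places $(a,b)$ inside the abelian congruence $\theta$, which is what makes $p(a,b,b)=a$.
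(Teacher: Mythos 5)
Your proof is correct and follows essentially the same approach as the paper's: reduce to the Lipparini Kiss term via Theorem~\ref{thm:indep}, obtain the inclusion $\supseteq$ in~\eqref{eq:q-and-delta} from Lemma~\ref{lm:crucial}, use Lemma~\ref{lm:C2*}(2) to get equality, and for part~(2) apply $p$ coordinatewise to the same three generators of $M(\alpha,\beta)$ and cancel via Lemma~\ref{lm:C2*}(2). The only cosmetic difference is that the paper packages the $\subseteq$ direction as a ``both sides are graphs of functions on $R^-(\alpha,\beta)$, one contains the other'' argument, whereas you argue pointwise; the underlying ingredient (Lemma~\ref{lm:C2*}(2)) is identical.
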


\begin{proof}
  The definition of $q^-$ involves only how $q$ restricts to $R(\alpha,\beta)$.
  Therefore, by Theorem~\ref{thm:indep}, no generality is lost by
  assuming that $q$ is the special Kiss term obtained from $p$ via
  Lipparini's Formula \eqref{LipForm}, i.e., $q$ is the Kiss term
  that we used throughout Sections~2--6.

  For (1), the definition of $q^-$ shows that the right-hand side of
  \eqref{eq:q-and-delta} is equal to
  \begin{equation}\label{eq:new-rhs}
    \left\{
    \begin{bmatrix}
    a & q(a,b,c,d)\\
    b & d
    \end{bmatrix}:
    \begin{bmatrix}
    a & c\\
    b & d
    \end{bmatrix}\in R(\alpha,\beta)
    \right\}.
  \end{equation}
Therefore, the inclusion $\supseteq$ in \eqref{eq:q-and-delta}
follows from Lemma~\ref{lm:crucial}.

Recall from the definition of $\Mast\alpha\beta$
(Definition~\ref{df:delta})
that $\Mast\alpha\beta\subseteq R(\alpha,\beta)$.
Combining this with the inclusion established in the preceding paragraph,
we get that the relations on both sides of
\eqref{eq:q-and-delta} project onto $R^-(\alpha,\beta)$ when omitting the top
right entries. In the relation on the right-hand side, the top right entry is
clearly a function of the remaining three entries. The same holds for
$\Mast\alpha\beta$ by Lemma~\ref{lm:C2*}~(2).
Therefore, both sides of \eqref{eq:q-and-delta} are graphs of functions
with domain $R^-(\alpha,\beta)$.
Since one is a subset of the other, they must be equal.
This completes the proof of statement~(1).

To prove (2), assume that $\alpha\le\beta$. Our hypothesis
$[\alpha,\beta]=0$ implies that $[\alpha,\alpha]=0$.

First we establish the inclusion $\supseteq$ in \eqref{eq:p-and-delta}.
If
$\begin{bmatrix}
    a & \\
    b & d
\end{bmatrix}\in R^-(\alpha,\beta)$, then
$a\stackrel{\alpha}{\equiv}b$, so
$[\alpha,\alpha]=0$ implies that $p(a,b,b)=a$.
Hence, by applying $p$ to the following generators
\begin{equation*}
\begin{bmatrix}
    a & a\\
    b & b
\end{bmatrix},\quad
\begin{bmatrix}
    b & b\\
    b & b
\end{bmatrix},\quad
\begin{bmatrix}
    b & d\\
    b & d
\end{bmatrix}
\end{equation*}  
of $M(\alpha,\beta)$ 
we get that
\begin{align*}
\begin{bmatrix}
    a & p(a,b,d)\\
    b & d
\end{bmatrix} &{}=
\begin{bmatrix}
    p(a,b,b) & p(a,b,d)\\
    p(b,b,b) & p(b,b,d)
\end{bmatrix}\\ &{}=
  p\left(
\begin{bmatrix}
    a & a\\
    b & b
\end{bmatrix},
\begin{bmatrix}
    b & b\\
    b & b
\end{bmatrix},
\begin{bmatrix}
    b & d\\
    b & d
\end{bmatrix}
  \right)
\in M(\alpha,\beta)\subseteq\Mast\alpha\beta.
\end{align*}
This proves that $\supseteq$ holds in \eqref{eq:p-and-delta}.

Now the same argument as in the second paragraph of the proof of (1) yields
that equality holds in \eqref{eq:p-and-delta}.
Combining this with \eqref{eq:q-and-delta} we conclude that $p$ and $q^-$
agree on $R^-(\alpha,\beta)$, as claimed.
\end{proof}

Equality~\eqref{eq:q-and-delta} yields a characterization
of $\Mast\alpha\beta$ in the situation where
$\varV$ is a variety with a 
Kiss term $q$, $\m a\in\varV$, and
$\alpha,\beta\in\Con(\m a)$ satisfy
$[\alpha,\beta]=0$.
In the following corollary we reformulate this characterization in
two different ways, the second of which is a fact that will
be used in the proof of Theorem~\ref{thm:quotient}.

\begin{cor} \label{cor:Delta-q-p}
Let $\varV$ be a variety with a Kiss term $q$
and let $\m a\in\varV$.
If $\alpha,\beta\in\Con(\m a)$
satisfy $[\alpha,\beta]=0$, then the following are true.
\begin{enumerate}
\item[(1)]
$    \Mast\alpha\beta=\left\{
    \begin{bmatrix}
    a & q(a,b,c,d)\\
    b & d
    \end{bmatrix}:
    \begin{bmatrix}
    a & c\\
    b & d
    \end{bmatrix}\in R(\alpha,\beta)
    \right\}.
$    
\bigskip    
\item[(2)]
$
  \Mast\alpha\beta=\left\{
\begin{bmatrix}
a & c\\
b & d
\end{bmatrix}\in R(\alpha,\beta)\;:\;
q(a,b,c,d)=c \right\}.
$
\end{enumerate}  
\end{cor}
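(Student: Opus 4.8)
The plan is to obtain both parts as essentially immediate consequences of Theorem~\ref{thm:Delta-q-p}(1), with no new computation. First I would note that a variety with a Kiss term is the same as a variety with a difference term (by Lipparini's Formula, as recorded in the introduction), so Theorem~\ref{thm:Delta-q-p} is available for the variety $\varV$ and the given Kiss term $q$ of the corollary. For part~(1), I would simply unwind the definition of $q^-$: since $q^-(a,b,d)=q(a,b,c,d)$ for every $c$ with $\begin{bmatrix} a&c\\b&d\end{bmatrix}\in R(\alpha,\beta)$, the right-hand side of \eqref{eq:q-and-delta} coincides, set-for-set, with the set displayed in part~(1) of the corollary; this rewriting is precisely the passage to \eqref{eq:new-rhs} already carried out in the proof of Theorem~\ref{thm:Delta-q-p}. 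Hence part~(1) is a literal restatement of Theorem~\ref{thm:Delta-q-p}(1).

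For part~(2) I would prove the two inclusions from part~(1) together with Lemma~\ref{lm:indep}. For $\supseteq$: if $\begin{bmatrix} a&c\\b&d\end{bmatrix}\in R(\alpha,\beta)$ satisfies $q(a,b,c,d)=c$, then part~(1) puts $\begin{bmatrix} a&q(a,b,c,d)\\b&d\end{bmatrix}=\begin{bmatrix} a&c\\b&d\end{bmatrix}$ into $\Mast\alpha\beta$. For $\subseteq$: if $\begin{bmatrix} a&c\\b&d\end{bmatrix}\in\Mast\alpha\beta$, then it lies in $R(\alpha,\beta)$ because $\Mast\alpha\beta\subseteq R(\alpha,\beta)$ (Definition~\ref{df:delta}); by part~(1) it equals $\begin{bmatrix} a&q(a,b,c',d)\\b&d\end{bmatrix}$ for some $c'$ with $\begin{bmatrix} a&c'\\b&d\end{bmatrix}\in R(\alpha,\beta)$, so $c=q(a,b,c',d)$; and since $q$ is independent of its third variable on $R(\alpha,\beta)$ by Lemma~\ref{lm:indep} (both $\begin{bmatrix} a&c\\b&d\end{bmatrix}$ and $\begin{bmatrix} a&c'\\b&d\end{bmatrix}$ lie in $R(\alpha,\beta)$), we conclude $q(a,b,c,d)=q(a,b,c',d)=c$, so $\begin{bmatrix} a&c\\b&d\end{bmatrix}$ belongs to the right-hand side of part~(2). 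Alternatively one could deduce $q(a,b,c,d)=c$ from Lemma~\ref{lm:C2*}(2), comparing $\begin{bmatrix} a&c\\b&d\end{bmatrix}$ with the matrix $\begin{bmatrix} a&q(a,b,c,d)\\b&d\end{bmatrix}$ supplied by part~(1).

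I do not anticipate a genuine obstacle: the corollary is a repackaging of Theorem~\ref{thm:Delta-q-p}(1). The only point that needs a moment's care is the $\subseteq$ direction of part~(2): part~(1) exhibits a matrix of $\Mast\alpha\beta$ in the form $\begin{bmatrix} a&q(a,b,c',d)\\b&d\end{bmatrix}$ with $c'$ a priori different from the $c$ one started with, and one must invoke Lemma~\ref{lm:indep} (or Lemma~\ref{lm:C2*}(2)) to pin down $q(a,b,c,d)=c$.
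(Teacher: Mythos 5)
Your proposal is correct and follows essentially the same route as the paper's proof: part~(1) is read off from Theorem~\ref{thm:Delta-q-p}(1) via the reformulation \eqref{eq:new-rhs}, and part~(2) follows by combining part~(1) with the independence of $q$ in its third variable on $R(\alpha,\beta)$. The only cosmetic difference is that the paper invokes property (II)$_q$ directly (together with $[\alpha,\beta]=0$) where you cite Lemma~\ref{lm:indep}, which packages exactly that step.
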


\begin{proof}
The left-hand side of 
Item~(1) equals the left-hand side of
\eqref{eq:q-and-delta} in
Theorem~\ref{thm:Delta-q-p}~(1).
The right-hand side of 
Item~(1) is shown to be equal to the right-hand side of
\eqref{eq:q-and-delta} in the proof of
Theorem~\ref{thm:Delta-q-p}~(1).

To verify Item~(2), pick any matrix
$M\in\Mast\alpha\beta\,(\subseteq R(\alpha,\beta))$.
According to \eqref{eq:new-rhs},
$M=\begin{bmatrix}
a & c\\
b & d
\end{bmatrix}$
for some 
$c=q(a,b,c_1,d)$ and some 
$\begin{bmatrix}
a & c_1\\
b & d
\end{bmatrix}\in R(\alpha,\beta)$.
By (II)$_q$ and by our assumption $[\alpha,\beta]=0$, we get that
$c=q(a,b,c_1,d)=q(a,b,c,d)$,
which shows that
$M=\begin{bmatrix} a&c\\b&d\end{bmatrix}$ is an element of the right-hand side
of the equality in Item (2).
Conversely, if  
$\begin{bmatrix}
a & c\\
b & d
\end{bmatrix}$
is in the right-hand side of
Item (2), that is,
$\begin{bmatrix}
a & c\\
b & d
\end{bmatrix}\in R(\alpha,\beta)$ and $q(a,b,c,d)=c$, then
$\begin{bmatrix}
a & c\\
b & d
\end{bmatrix}
=\begin{bmatrix}
a & q(a,b,c,d)\\
b & d
\end{bmatrix}\in\Mast\alpha\beta$
follows directly from
Item (1).
\end{proof}

\bigskip

In the final portion of this section,
we address how to remove the
requirement ``$[\alpha,\beta]=0$'' from
our results about $\Mast\alpha\beta$.
Given $\varV$, $\m a\in \varV$, $\alpha,\beta\in\Con(\m a)$,
we consider the natural map
$\nu\colon \m a\to \m a/[\alpha,\beta]\colon a\mapsto a/[\alpha,\beta]$
and denote by $\overline{x}$ and $\overline{X}$
the images $\nu(x)$ and $\nu(X)$ of elements $x\in A$
and subsets or relations $X$ on $A$.
We shall compare $\Mast\alpha\beta$ to
$\Mast{\overline{\alpha}}{\overline{\beta}}$.
We will learn in Theorem~\ref{thm:arbitrary}
that $\Mast\alpha\beta$ is ``$[\alpha,\beta]$-saturated''.
A subset $X\subseteq A$ is \emph{$\theta$-saturated}
for a congruence $\theta\in\Con(\m a)$
if $X$ is a union of $\theta$-classes. 
An
 $n$-ary relation $R\subseteq A^n$ is $\theta$-saturated
if it is a union of $\theta^n$-classes.
The saturation result of Theorem~\ref{thm:arbitrary}~(2)
allows us to reflect
information about $\Mast{\overline{\alpha}}{\overline{\beta}}$,
which is 
the $\Delta$-relation on the quotient $\overline{\m a}$
(where $[\overline{\alpha},\overline{\beta}]=0$ holds),
back to information about
$\Mast\alpha\beta$ on $\m a$
(where $[\alpha,\beta]$ need not be zero).
See e.g.\ Theorem~\ref{thm:quotient}.
The first step is the next result,
which describes properties
of $\Mast\alpha\beta$ that are true in any variety.
(It is possible to derive Theorem~\ref{thm:arbitrary}~(1) from 
the more-general result
\cite[Theorem 4.10 (1)$\Rightarrow$(2)]{moor2021}
by observing that $[\alpha,\beta]\subseteq [\alpha,\beta]_H$,
but we give our own proof here.)

\begin{thm} \label{thm:arbitrary}
Let 
$\m a$ be any algebra
and $\alpha,\beta\in\Con(\m a)$.
\begin{enumerate}
\item If $(a,b)\in[\alpha,\beta]$,  then
  $\begin{bmatrix}a&a\\a&b\end{bmatrix}
\in \Mast\alpha\beta$.
  \smallskip
  
\item $\Mast\alpha\beta$ is $[\alpha,\beta]$-saturated.
\end{enumerate}  
\end{thm}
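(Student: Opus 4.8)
The plan is to prove (1) first and then derive (2) as a consequence. For part~(1), I would use the standard term-condition description of the commutator $[\alpha,\beta]$: a pair $(a,b)$ lies in $[\alpha,\beta]$ precisely when $b$ is obtained from $a$ by a sequence of ``matrix closure'' steps built from polynomials that respect $\beta$ in one coordinate block and $\alpha$ in the other. More concretely, $[\alpha,\beta]$ is generated by pairs of the form $\bigl(t(\mathbf{a},\mathbf{u}),t(\mathbf{a},\mathbf{v})\bigr)$ where $t$ is a term, $\mathbf{u}\mathrel{\alpha}\mathbf{v}$, and all the ``$M(\alpha,\beta)$-matrices'' $\begin{bmatrix} t(\mathbf{a}',\mathbf{u}) & t(\mathbf{a}'',\mathbf{u}) \\ t(\mathbf{a}',\mathbf{v}) & t(\mathbf{a}'',\mathbf{v})\end{bmatrix}$ with $\mathbf{a}'\mathrel{\beta}\mathbf{a}''$ collapse along the top row. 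The cleanest route: show directly that the set $C := \{(a,b) : \begin{bmatrix}a&a\\a&b\end{bmatrix}\in\Mast\alpha\beta\}$ is a congruence contained in $\alpha\cap\beta$ that satisfies $C(\alpha,\beta)$ (i.e.\ that $M(\alpha,\beta)$ ``forces'' $C$), and hence contains $[\alpha,\beta]$ by minimality of the commutator. Reflexivity and symmetry of $C$ are immediate from the generators of $M(\alpha,\beta)$ and from closure under swapping rows/columns (already used in Section~\ref{section:easyLM}); transitivity of $C$ is exactly vertical gluing in Definition~\ref{df:delta}; compatibility with the operations is Lemma~\ref{lm:subalg}. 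To see $M(\alpha,\beta)$ forces $C$: given a term $t$, tuples $\mathbf{a}'\mathrel\beta\mathbf{a}''$ and $\mathbf{u}\mathrel\alpha\mathbf{v}$ with the top row of the $t$-matrix constant (say both equal to $e$), apply $t$ coordinatewise to the corresponding generators $\begin{bmatrix}a_j'&a_j''\\a_j'&a_j''\end{bmatrix}$ and $\begin{bmatrix}u_k&u_k\\v_k&v_k\end{bmatrix}$ of $M(\alpha,\beta)$; this produces $\begin{bmatrix}e&e\\ t(\mathbf{a}',\mathbf{v}) & t(\mathbf{a}'',\mathbf{v})\end{bmatrix}\in M(\alpha,\beta)\subseteq\Mast\alpha\beta$, and a column swap plus a vertical glue with its mirror image yields $\begin{bmatrix}c&c\\c&c'\end{bmatrix}\in\Mast\alpha\beta$ for the relevant pair $(c,c')=(t(\mathbf a',\mathbf v),t(\mathbf a'',\mathbf v))$, i.e.\ $(c,c')\in C$. (One must be a little careful to phrase the forcing condition in the exact form that the definition of $[\alpha,\beta]$ uses, but this is routine.)

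For part~(2), the plan is to deduce $[\alpha,\beta]$-saturation of $\Mast\alpha\beta$ from part~(1) together with closure under operations and gluing. Saturation means: if $\begin{bmatrix}a&c\\b&d\end{bmatrix}\in\Mast\alpha\beta$ and $(a,a_1),(b,b_1),(c,c_1),(d,d_1)\in[\alpha,\beta]$, then $\begin{bmatrix}a_1&c_1\\b_1&d_1\end{bmatrix}\in\Mast\alpha\beta$. Since we can change one entry at a time, it suffices to handle the four single-entry modifications. To change the top-left entry from $a$ to $a_1$ with $(a,a_1)\in[\alpha,\beta]\subseteq\alpha\cap\beta$: by part~(1) we have $\begin{bmatrix}a&a\\a&a_1\end{bmatrix}\in\Mast\alpha\beta$, and after a row swap and/or column swap we obtain the ``change matrices'' $\begin{bmatrix}a_1&a\\a&a\end{bmatrix}$, $\begin{bmatrix}a&a\\a_1&a\end{bmatrix}$, etc.\ in $\Mast\alpha\beta$; gluing such a change matrix (horizontally or vertically, as appropriate) onto the given matrix replaces one corner entry by its $[\alpha,\beta]$-partner while leaving the other three entries untouched. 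Doing this at each of the four corners in turn gives the claim. A small point to check is that $[\alpha,\beta]\le\alpha$ and $[\alpha,\beta]\le\beta$, so that all the auxiliary matrices actually lie in $R(\alpha,\beta)$ and the gluings are legal; this is standard.

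The main obstacle is part~(1), and within it the precise bookkeeping of the term-condition/forcing argument: one has to match the ``$M(\alpha,\beta)$ forces $C$'' condition as I use it to whichever of the equivalent definitions of $[\alpha,\beta]$ one takes as primary, and verify that every required instance of the term condition really does produce a vertical-glue chain inside $\Mast\alpha\beta$ ending in a constant-top-row $2\times2$ matrix. Everything else — reflexivity, symmetry, transitivity via vertical gluing, compatibility via Lemma~\ref{lm:subalg}, and the corner-swapping/gluing moves in part~(2) — is a direct unwinding of the closure conditions in Definition~\ref{df:delta} and the facts already established in Section~\ref{section:easyLM}.
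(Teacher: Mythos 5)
Your part~(2) matches the paper's proof exactly (change one corner at a time using part~(1), a row/column swap, and a horizontal and vertical glue). Your part~(1) takes a genuinely different route: the paper builds $[\alpha,\beta]$ as a union of an increasing chain $\delta_0 \le \delta_1 \le \cdots$ and argues by induction on $i$, while you propose to show directly that $C := \{(a,b) : \bigl[\begin{smallmatrix}a&a\\a&b\end{smallmatrix}\bigr]\in\Mast\alpha\beta\}$ is a congruence satisfying the centralizer condition $C(\alpha,\beta;C)$ and then invoke minimality of the commutator. That is a legitimately more conceptual approach (it is essentially the inclusion $[\alpha,\beta]\le[\alpha,\beta]_H$ in Moorhead's terminology, cf.\ Remark~3 in Section~\ref{section:refinements}).

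However, your execution of part~(1) has a real gap. Your verification of ``$M(\alpha,\beta)$ forces $C$'' treats only matrices whose top row is genuinely constant (``say both equal to $e$''), i.e.\ the instance $(p,q)\in 0$ of the condition $C(\alpha,\beta;C)$. This establishes only that the congruence $\delta_1$ (generated by bottom rows of $M(\alpha,\beta)$-matrices with equal top entries) lies in $C$, \emph{not} that $[\alpha,\beta]\subseteq C$. Your ``more concretely, $[\alpha,\beta]$ is generated by pairs of the form \dots'' paraphrase is not a correct description of $[\alpha,\beta]$: the commutator is the fixed point of an iterated process, and $\delta_1\subsetneq[\alpha,\beta]$ is possible outside the group-like setting. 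To make your route work you must handle the general instance: given $\bigl[\begin{smallmatrix}p&q\\r&s\end{smallmatrix}\bigr]\in M(\alpha,\beta)$ with $(p,q)\in C$, show $(r,s)\in C$. This is doable by a short gluing chain --- e.g.\ from $\bigl[\begin{smallmatrix}p&p\\p&q\end{smallmatrix}\bigr]\in\Mast\alpha\beta$ glue the generator $\bigl[\begin{smallmatrix}r&r\\p&p\end{smallmatrix}\bigr]$ on top to get $\bigl[\begin{smallmatrix}r&r\\p&q\end{smallmatrix}\bigr]$, then glue $\bigl[\begin{smallmatrix}p&q\\r&s\end{smallmatrix}\bigr]$ on the bottom to get $\bigl[\begin{smallmatrix}r&r\\r&s\end{smallmatrix}\bigr]$ --- but this argument is absent from your write-up and is precisely the content that makes the result nontrivial (it corresponds to the paper's Claim~\ref{clm:chain}). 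Finally, the assertions that symmetry of $C$ is ``immediate'' and transitivity ``is exactly vertical gluing'' are overstated: the bottom row of $\bigl[\begin{smallmatrix}a&a\\a&b\end{smallmatrix}\bigr]$ does not match the top row of $\bigl[\begin{smallmatrix}b&b\\b&c\end{smallmatrix}\bigr]$, so neither follows from a single closure step; both require auxiliary generators and a short chain of glues and swaps, similar to the one above.
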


\begin{proof}
We will freely use the fact that,
since $M(\alpha,\beta)$ is closed under interchanging
rows or columns, the same is true for
$\Mast\alpha\beta$.
Hence the conditions 
$\begin{bmatrix}a&a\\a&b\end{bmatrix}
\in \Mast\alpha\beta$,
$\begin{bmatrix}a&a\\b&a\end{bmatrix}
\in \Mast\alpha\beta$,
$\begin{bmatrix}b&a\\a&a\end{bmatrix}
\in \Mast\alpha\beta$,
and
$\begin{bmatrix}a&b\\a&a\end{bmatrix}
\in \Mast\alpha\beta$ are equivalent.

For Item (1),
let $\delta_0=0\in\Con(\m a)$ be the equality relation.
If $\delta_i$ has been defined, let $\delta_{i+1}$
be the congruence of $\m a$ generated by 
\begin{equation} \label{eq:Xi}
X_{i+1} = \left\{(r,s)\in A^2\;:\;\begin{bmatrix}p&q\\r&s\end{bmatrix}\in M(\alpha,\beta)
\textrm{ for some } (p,q)\in\delta_i\right\}.
\end{equation}
It is easy to prove (see the proof of
\cite[Proposition~4.1(3)]{DemFreVal2019})
that
$\delta_0\leq \delta_1\leq \cdots$ and 
$[\alpha,\beta] = \bigcup \delta_i$.
We execute the proof of Item~(1) by arguing by induction on $i$
that,
\begin{equation} \label{eq:induction}
\tag*{\textrm{Item}~$(1)_i$}
\textrm{  if $(a,b)\in \delta_i$,
then $\begin{bmatrix}a&a\\a&b\end{bmatrix}
  \in \Mast\alpha\beta$.}
\end{equation}  

Item~$(1)_0$ holds since
$\Mast\alpha\beta$ is reflexive.
We assume that Item~$(1)_i$ holds
and proceed to prove that
Item~$(1)_{i+1}$ holds.
The binary relation $X_{i+1}$
defined on line (\ref{eq:Xi})
is a subalgebra of $\m a^2$
since $M(\alpha,\beta)\leq \m a^{2\times 2}$
and $\delta_i\in\Con(\m a)$.
The relation is reflexive and symmetric 
since $M(\alpha,\beta)$ is reflexive
and closed under interchanging columns.
This shows that 
$X_{i+1}$ is a tolerance relation on $\m a$.
Therefore, the congruence $\delta_{i+1}$
generated by $X_{i+1}$
equals the transitive closure of $X_{i+1}$.
If $(a,b)\in\delta_{i+1}$, there must exist a sequence
of matrices
\[
\begin{bmatrix}
p_0& q_0\\
r_0& s_0
\end{bmatrix},
\begin{bmatrix}
p_1& q_1\\
r_1& s_1
\end{bmatrix}, \ldots,
\begin{bmatrix}
p_n& q_n\\
r_n& s_n
\end{bmatrix}, \quad \textrm{with }
\begin{bmatrix} 
p_{j}& q_{j}\\
r_{j}& s_{j}
\end{bmatrix} \in M(\alpha,\beta)
\]
where
$a=r_0$, $b=s_n$, $s_j=r_{j+1}$ for all $j<n$,
and $(p_j, q_j)\in\delta_i$ for all $j$.
We shall employ the following claim
to work through this sequence.

\begin{clm} \label{clm:chain}
If
\begin{enumerate}
\item
  both 
$M=\begin{bmatrix} 
p_{j}& q_{j}\\
r_{j}& s_{j}
\end{bmatrix}$ and 
$N=\begin{bmatrix} 
p_{j+1}& q_{j+1}\\
r_{j+1}& s_{j+1}
\end{bmatrix}$ belong to $\Mast\alpha\beta$, 
\smallskip

\item $s_j=r_{j+1}$,  and
  \smallskip
  
\item $(p_j,q_j)\in\delta_i$,
\end{enumerate}
then $\begin{bmatrix} 
p_{j+1}& q_{j+1}\\
r_{j}& s_{j+1}
\end{bmatrix} \in \Mast\alpha\beta$.
\end{clm}  

Since $(p_j,q_j)\in\delta_i$, we can use the induction
hypothesis to conclude that 
$\begin{bmatrix} 
q_{j}& q_{j}\\
p_{j}& q_{j}
\end{bmatrix}\in\Mast\alpha\beta$.
This is the center-left matrix in
Figure~\ref{fig6}.
The upper-left matrix in Figure~\ref{fig6}
belongs to
$G(\alpha,\beta)\;(\subseteq M(\alpha,\beta)\subseteq \Mast\alpha\beta)$,
since $(q_j,p_{j+1})\in\alpha$.
(The fact that $(q_j,p_{j+1})\in\alpha$ 
follows from
$q_j
\stackrel{\alpha}{\equiv}
s_j = r_{j+1}
\stackrel{\alpha}{\equiv}
p_{j+1}$.)
An examination of Figure~\ref{fig6} establishes that
$\begin{bmatrix} 
p_{j+1}& q_{j+1}\\
r_{j}& s_{j+1}
\end{bmatrix}$
is an element of $\Mast\alpha\beta$, which
concludes the proof of Claim~\ref{clm:chain}.

\medskip

\begin{figure} 
\begin{tikzpicture}[scale=0.9]

\draw[-] (-.3,-.2) -- (2.7,-.2) -- (2.7,1.8) -- (-.3,1.8) -- cycle;
\draw[-] (-.3,2.3) -- (2.7,2.3) -- (2.7,4.3) -- (-.3,4.3) -- cycle;
\draw[-] (-.3,4.8) -- (2.7,4.8) -- (2.7,6.8) -- (-.3,6.8) -- cycle;
\draw[-] (3.2,-.2) -- (6.2,-.2) -- (6.2,6.8) -- (3.2,6.8) -- cycle;

\node at (1.2,.8) {$\Mast\alpha\beta$};
\node at (1.2,3.3) {$\Mast\alpha\beta$};  
\node at (1.2,5.8) {$M(\alpha,\beta)$};  
\node at (4.7,3.3) {$\Mast\alpha\beta$};

\node at (.3,1.5) {$p_j$};
\node at (2.1,1.5) {$q_j$};
\node at (.3,0.1) {$r_j$};
\node at (2.1,0.1) {$s_j$};

\node at (.3,4.0) {$q_j$};
\node at (2.1,4.0) {$q_j$};
\node at (.3,2.6) {$p_j$};
\node at (2.1,2.6) {$q_j$};

\node at (.3,6.5) {$p_{j+1}$};
\node at (2.1,6.5) {$p_{j+1}$};
\node at (.3,5.1) {$q_j$};
\node at (2.1,5.1) {$q_j$};

\node at (3.8,0.1) {$r_{j+1}$};
\node at (5.6,0.1) {$s_{j+1}$};
\node at (3.8,6.5) {$p_{j+1}$};
\node at (5.6,6.5) {$q_{j+1}$};

\draw[-,ultra thick,color=\greencol] (2.7,.05) -- (3.2,.05);
\draw[-,ultra thick,color=\greencol] (2.7,.15) -- (3.2,.15);
\draw[-,ultra thick,color=\greencol] (2.7,6.55) -- (3.2,6.55);
\draw[-,ultra thick,color=\greencol] (2.7,6.45) -- (3.2,6.45);

\draw[-,ultra thick,color=\greencol] (0.25,1.8) -- (0.25,2.3);
\draw[-,ultra thick,color=\greencol] (0.35,1.8) -- (0.35,2.3);
\draw[-,ultra thick,color=\greencol] (2.05,1.8) -- (2.05,2.3);
\draw[-,ultra thick,color=\greencol] (2.15,1.8) -- (2.15,2.3);

\draw[-,ultra thick,color=\greencol] (0.25,4.3) -- (0.25,4.8);
\draw[-,ultra thick,color=\greencol] (0.35,4.3) -- (0.35,4.8);
\draw[-,ultra thick,color=\greencol] (2.05,4.3) -- (2.05,4.8);
\draw[-,ultra thick,color=\greencol] (2.15,4.3) -- (2.15,4.8);

\end{tikzpicture}
\caption{}\label{fig6}
\end{figure}

Applying Claim~\ref{clm:chain} repeatedly, left-to-right, to
the sequence of matrices
\[
\begin{bmatrix}
p_0& q_0\\
r_0& s_0
\end{bmatrix},
\begin{bmatrix}
p_1& q_1\\
r_1& s_1
\end{bmatrix}, \ldots,
\begin{bmatrix}
p_n& q_n\\
r_n& s_n
\end{bmatrix}  \in M(\alpha,\beta)\subseteq \Mast\alpha\beta
\]
leads to 
$\begin{bmatrix}
p_j& q_j\\
r_0& s_j
\end{bmatrix}  \in\Mast\alpha\beta
$
for all $j$, in particular 
$\begin{bmatrix}
p_n& q_n\\
a& b
\end{bmatrix} =
\begin{bmatrix}
p_n& q_n\\
r_0& s_n
\end{bmatrix}  \in\Mast\alpha\beta.$

Apply Claim~\ref{clm:chain}
one more time to the matrices
$M=\begin{bmatrix}
p_n& q_n\\
a& b
\end{bmatrix}  \in\Mast\alpha\beta$
and
$N=\begin{bmatrix}
a& a\\
b& b
\end{bmatrix}  \in\Mast\alpha\beta$.
Note that the assumption that $(a,b)\in\delta_{i+1}\leq [\alpha,\beta]$
implies that $(a,b)\in\alpha$, so
we do indeed have
$\begin{bmatrix}
a& a\\
b& b
\end{bmatrix}  \in\Mast\alpha\beta$.
Claim~\ref{clm:chain} yields that 
$\begin{bmatrix}
a& a\\
a& b
\end{bmatrix}  \in\Mast\alpha\beta$, as desired.
This concludes the inductive proof of Item~(1).

Item~(2) asserts that if
$\begin{bmatrix} a&c\\b&d\end{bmatrix}\in \Mast\alpha\beta$
  and
$(a,a'), (b,b'), (c,c'), (d,d')\in [\alpha,\beta]$,
then
$\begin{bmatrix} a'&c'\\b'&d'\end{bmatrix}\in \Mast\alpha\beta$.
  Since $\Mast\alpha\beta$ is closed under row and column
  interchanges, 
  it suffices to check this one entry at a time,
  so we only explain why 
$\begin{bmatrix} a&c\\b&d\end{bmatrix}\in \Mast\alpha\beta$
and $(a,a')\in [\alpha,\beta]$ imply that 
$\begin{bmatrix} a'&c\\b&d\end{bmatrix}\in \Mast\alpha\beta$.
By Item~(1), the assumption $(a,a')\in [\alpha,\beta]$
implies that
$\begin{bmatrix}a'&a\\a&a\end{bmatrix}\in \Mast\alpha\beta$.
Now Figure~\ref{fig7} shows how to realize 
$\begin{bmatrix} a'&c\\b&d\end{bmatrix}$
  as an element of $\Mast\alpha\beta$.
\end{proof}

\begin{figure}[H]
\begin{tikzpicture}[scale=0.9]

\draw[-] (-.3,-.2) -- (2.7,-.2) -- (2.7,1.8) -- (-.3,1.8) -- cycle;
\draw[-] (-.3,2.3) -- (2.7,2.3) -- (2.7,4.3) -- (-.3,4.3) -- cycle;
\draw[-] (3.2,-.2) -- (6.2,-.2) -- (6.2,4.3) -- (3.2,4.3) -- cycle;

\node at (1.2,.8) {$M(\alpha,\beta)$};
\node at (1.2,3.3) {$\Mast\alpha\beta$};  
\node at (4.7,2.05) {$\Mast\alpha\beta$};

\node at (.3,4.0) {$a'$};
\node at (2.1,4.0) {$a$};
\node at (.3,2.6) {$a$};
\node at (2.1,2.6) {$a$};

\node at (.3,1.5) {$a$};
\node at (2.1,1.5) {$a$};
\node at (.3,0.1) {$b$};
\node at (2.1,0.1) {$b$};

\node at (3.8,0.1) {$b$};
\node at (5.6,0.1) {$d$};
\node at (3.8,4.0) {$a$};
\node at (5.6,4.0) {$c$};

\draw[-,ultra thick,color=\greencol] (2.7,.05) -- (3.2,.05);
\draw[-,ultra thick,color=\greencol] (2.7,.15) -- (3.2,.15);
\draw[-,ultra thick,color=\greencol] (2.7,4.05) -- (3.2,4.05);
\draw[-,ultra thick,color=\greencol] (2.7,3.95) -- (3.2,3.95);

\draw[-,ultra thick,color=\greencol] (0.25,1.8) -- (0.25,2.3);
\draw[-,ultra thick,color=\greencol] (0.35,1.8) -- (0.35,2.3);
\draw[-,ultra thick,color=\greencol] (2.05,1.8) -- (2.05,2.3);
\draw[-,ultra thick,color=\greencol] (2.15,1.8) -- (2.15,2.3);

\end{tikzpicture}
\caption{}\label{fig7}  
\end{figure} 

The converse
of Theorem~\ref{thm:arbitrary}~(1) is not true for arbitrary varieties.
A counterexample can be constructed by the technique
described in \cite[Example~3.2]{kearnes-kiss2013}.
Nevertheless, the converse
of Theorem~\ref{thm:arbitrary}~(1) is true
if the variety has a Kiss term.
This is part of the next theorem.

\begin{thm} \label{thm:quotient}
Let $\varV$ be a variety with a Kiss term $q$, $\m a\in\varV$,
and $\alpha,\beta\in\Con(\m a)$.
\begin{enumerate}
\item
$\Mast\alpha\beta =
  \left\{\begin{bmatrix}a&c\\b&d\end{bmatrix}\in R(\alpha,\beta)
  \;:\;
q(a,b,c,d)\stackrel{[\alpha,\beta]}{\equiv} c\right\}.  
$
\item
If $\begin{bmatrix}a&c\\b&d
\end{bmatrix} \in R(\alpha,\beta)$ and
$c'=q(a,b,c,d)$, then
$\begin{bmatrix} a&c'\\b&d\end{bmatrix} \in \Mast\alpha\beta$.\\
(Lemma~\ref{lm:crucial} is true without
  the assumption ``$[\alpha,\beta]=0$''.)
\item  \label{quotient:it4}
$
[\alpha,\beta] =
\left\{(a,b)\in A^2\;:\;\begin{bmatrix}a&a\\a&b\end{bmatrix}\in \Mast\alpha\beta\right\}
$.\\
(The converse
of Theorem~\ref{thm:arbitrary}~(1) is true in the presence of
a Kiss term.)
\end{enumerate}
\end{thm}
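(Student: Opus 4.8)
Put $\theta=[\alpha,\beta]$ and let $\nu\colon\m a\to\overline{\m a}:=\m a/\theta$ be the natural map, writing $\overline x=\nu(x)$ and $\overline X=\nu(X)$ as in the paragraph preceding Theorem~\ref{thm:arbitrary}. Since $\theta\le\alpha\wedge\beta$, the relations $\overline\alpha=\alpha/\theta$ and $\overline\beta=\beta/\theta$ are honest congruences of $\overline{\m a}\in\varV$, and (as is standard, since the commutator commutes with quotients by congruences below $\alpha\wedge\beta$) $[\overline\alpha,\overline\beta]=\nu(\theta)=0$. My plan is to reduce everything to this ``$[\overline\alpha,\overline\beta]=0$'' situation, where the results of Section~\ref{section:refinements} apply, by first proving the single identity
\begin{equation*}
\Mast\alpha\beta=\nu^{-1}\bigl(\Mast{\overline\alpha}{\overline\beta}\bigr)\tag{$*$}
\end{equation*}
and then reading off items (1)--(3) mechanically.

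For ($*$), the inclusion $\Mast\alpha\beta\subseteq\nu^{-1}(\Mast{\overline\alpha}{\overline\beta})$ is routine: $\nu$, acting coordinatewise on $2\times2$ matrices, carries $G(\alpha,\beta)$ onto $G(\overline\alpha,\overline\beta)$ and hence $M(\alpha,\beta)$ onto $M(\overline\alpha,\overline\beta)$, and it commutes with horizontal and vertical gluing, so applying $\nu$ along the $M_n$-construction from the proof of Lemma~\ref{lm:subalg} gives $\nu(\Mast\alpha\beta)\subseteq\Mast{\overline\alpha}{\overline\beta}$. The content of ($*$) is the reverse inclusion: \emph{every} $\nu$-preimage of a matrix in $\Mast{\overline\alpha}{\overline\beta}$ already lies in $\Mast\alpha\beta$. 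I would prove this by induction along $\Mast{\overline\alpha}{\overline\beta}=\bigcup_n M_n(\overline\alpha,\overline\beta)$, showing that if $\overline M\in M_n(\overline\alpha,\overline\beta)$ then every preimage of $\overline M$ lies in $\Mast\alpha\beta$. In the base case $\overline M\in M(\overline\alpha,\overline\beta)=\nu(M(\alpha,\beta))$, any preimage $M$ agrees entrywise modulo $\theta$ with some matrix of $M(\alpha,\beta)\subseteq\Mast\alpha\beta$, so $M\in\Mast\alpha\beta$ because $\Mast\alpha\beta$ is $\theta$-saturated by Theorem~\ref{thm:arbitrary}(2); this is where the saturation theorem is essential, and I expect it to be the only real obstacle. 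In the inductive step, say $\overline M=\bmat{\overline a}{\overline b}{\overline c}{\overline d}$ is the horizontal gluing of $\bmat{\overline a}{\overline b}{\overline r}{\overline s},\bmat{\overline r}{\overline s}{\overline c}{\overline d}\in M_{n-1}(\overline\alpha,\overline\beta)$; choosing any preimages $r,s$ of $\overline r,\overline s$, the matrices $\bmat a b r s$ and $\bmat r s c d$ are preimages of those two matrices and hence lie in $\Mast\alpha\beta$ by the induction hypothesis, and now that their shared column $(r,s)$ matches exactly we may glue them to put $M=\bmat a b c d$ in $\Mast\alpha\beta$; the vertical case is symmetric.

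Granting ($*$), the three items follow by transporting to $\overline{\m a}$. For (1): using $\Mast\alpha\beta\subseteq R(\alpha,\beta)$ and $\nu^{-1}(R(\overline\alpha,\overline\beta))=R(\alpha,\beta)$, a matrix $\bmat a b c d\in R(\alpha,\beta)$ lies in $\Mast\alpha\beta$ iff $\bmat{\overline a}{\overline b}{\overline c}{\overline d}\in\Mast{\overline\alpha}{\overline\beta}$, which by Corollary~\ref{cor:Delta-q-p}(2) applied in $\overline{\m a}$ holds iff $q(\overline a,\overline b,\overline c,\overline d)=\overline c$, i.e.\ iff $q(a,b,c,d)\stackrel{[\alpha,\beta]}{\equiv}c$. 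For (2): given $\bmat a b c d\in R(\alpha,\beta)$ and $c'=q(a,b,c,d)$, Corollary~\ref{cor:Delta-q-p}(1) applied in $\overline{\m a}$ gives $\bmat{\overline a}{\overline b}{\overline{c'}}{\overline d}\in\Mast{\overline\alpha}{\overline\beta}$, and since $\bmat a b {c'} d$ is a preimage of this matrix, ($*$) yields $\bmat a b {c'} d\in\Mast\alpha\beta$. For (3): by ($*$), $\bmat a a a b\in\Mast\alpha\beta$ iff $\bmat{\overline a}{\overline a}{\overline a}{\overline b}\in\Mast{\overline\alpha}{\overline\beta}$; by Lemma~\ref{lm:C2*}(1) this last membership forces $\overline a=\overline b$, while conversely if $\overline a=\overline b$ the matrix is a generator of $M(\overline\alpha,\overline\beta)$, so altogether $\bmat a a a b\in\Mast\alpha\beta$ iff $\overline a=\overline b$ iff $(a,b)\in[\alpha,\beta]$. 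The parenthetical remarks accompanying (2) and (3) are precisely the content of these two statements.
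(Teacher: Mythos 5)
Your proof is correct and follows essentially the same route as the paper's: the identity $(*)$ you isolate is precisely the paper's equivalence ``(a) $\Leftrightarrow$ (c)'' in its proof of item~(1), established there by the same induction along the $M_n(\ol\alpha,\ol\beta)$ hierarchy with Theorem~\ref{thm:arbitrary}(2) ($[\alpha,\beta]$-saturation) handling the base case, and items (1)--(3) are then read off from it in the way you describe. The one cosmetic difference is your item~(3): you deduce $\supseteq$ from $(*)$ and Lemma~\ref{lm:C2*}(1) applied in the quotient, whereas the paper deduces it from item~(1) together with the Kiss identities (I)$_q$ and (II)$_q$; both arguments are valid, and yours is slightly more economical.
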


\begin{proof}
  To simplify notation, we will write $\ol{\m a}$ for $\m a/[\alpha,\beta]$,
  and we will use the ``bar notation'' introduced in the paragraph preceding
  Theorem~\ref{thm:arbitrary}
  for passing from elements of $\m a$ or relations on
  $\m a$ to the elements of $\ol{\m a}$ or relations on $\ol{\m a}$
  under the quotient map $\m a\to\ol{\m a}$.

  For the equality in Item~(1)
  we need to argue that the following are equivalent
  for arbitrary elements $a,b,c,d$ of $\m a$:
\begin{enumerate}
\item[(a)]
  $\begin{bmatrix}a&c\\b&d\end{bmatrix}\in \Mast\alpha\beta$.
\smallskip
\item[(b)]
  $\begin{bmatrix}a&c\\b&d\end{bmatrix}\in R(\alpha,\beta)$
  and $q(a,b,c,d)\stackrel{[\alpha,\beta]}{\equiv} c$.
\end{enumerate}  
We will establish (a)~$\Leftrightarrow$~(b) by showing that each one of
(a) and (b) is equivalent to the following condition:
\begin{enumerate}
\item[(c)]
  $\begin{bmatrix}\ol{a}&\ol{c}\\ \ol{b}&\ol{d}\end{bmatrix}\in
  \Mast{\ol{\alpha}}{\ol{\beta}}$.
\end{enumerate}  

  We start with (a)~$\Leftrightarrow$~(c). 
  Proving (a)~$\Rightarrow$~(c) is equivalent to
  showing that
  $\ol{\Mast\alpha\beta}\subseteq\Mast{\ol{\alpha}}{\ol{\beta}}$.
  To prove this inclusion,
  recall that in the proof of Lemma~\ref{lm:subalg} we defined an increasing
  sequence
  \begin{equation*}
    M(\alpha,\beta)
    =:M_0(\alpha,\beta)
    \subseteq M_1(\alpha,\beta)
    \subseteq\dots
    \subseteq M_n(\alpha,\beta)
    \subseteq\dots
  \end{equation*}
  of subalgebras of $\m A^{2\times2}$ and proved that
  $\Mast\alpha\beta=\bigcup_{n\ge0} M_n(\alpha,\beta)$.
  Applying this fact to the algebra
  $\ol{\m a}$ and
  $\ol{\alpha},\ol{\beta}\in\Con(\ol{\m a})$,
  we get a sequence
  \begin{equation*}
    M(\ol{\alpha},\ol{\beta})
    =:M_0(\ol{\alpha},\ol{\beta})
    \subseteq M_1(\ol{\alpha},\ol{\beta})
    \subseteq\dots
    \subseteq M_n(\ol{\alpha},\ol{\beta})
    \subseteq\dots
  \end{equation*}
  of subalgebras of $\ol{\m a}^{2\times2}$ such that
  $\Mast{\ol{\alpha}}{\ol{\beta}}=\bigcup_{n\ge0} M_n(\ol{\alpha},\ol{\beta})$.
  We claim that
  \begin{equation}\label{eq:nu}
    \ol{M_n(\alpha,\beta)}\subseteq M_n(\ol{\alpha},\ol{\beta})
  \end{equation}  
  holds for all $n\ge0$.
  For $n=0$, we have equality, because
  $\ol{G(\alpha,\beta)}=G(\ol{\alpha},\ol{\beta})$ for the generating sets
  of $M_0(\alpha,\beta)=M(\alpha,\beta)$ and
  $M_0(\ol{\alpha},\ol{\beta})=M(\ol{\alpha},\ol{\beta})$ (see
  the definition of $G(\alpha,\beta)$
  at \eqref{eq:G}).
  For $n>0$, the definitions of $M_n(\alpha,\beta)$ and
  $M_n(\ol{\alpha},\ol{\beta})$ yield \eqref{eq:nu} by induction.
  Thus,
  \begin{equation*}
    \ol{\Mast\alpha\beta}
    = \bigcup_{n\ge0} \ol{M_n(\alpha,\beta)}
    \subseteq \bigcup_{n\ge0} M_n(\ol{\alpha},\ol{\beta})
    =\Mast{\ol{\alpha}}{\ol{\beta}},
  \end{equation*}    
  as claimed.

  For the converse, (c)~$\Rightarrow$~(a), we need to argue that if
  $\begin{bmatrix}\ol{a}&\ol{c}\\\ol{b}&\ol{d}\end{bmatrix}\in
    \Mast{\ol{\alpha}}{\ol{\beta}}$,
  then
    $\begin{bmatrix}a&c\\b&d\end{bmatrix}\in \Mast\alpha\beta$.
  Since
  $\Mast{\ol{\alpha}}{\ol{\beta}}=\bigcup_{n\ge0} M_n(\ol{\alpha},\ol{\beta})$,
  it suffices to show that for every $n\ge0$,
  \begin{equation}\label{eq:induction2}
\forall a, b, c, d\in A,\;\;  \text{if\ \ 
  $\begin{bmatrix}\ol{a}&\ol{c}\\\ol{b}&\ol{d}\end{bmatrix}\in
  M_n(\ol{\alpha},\ol{\beta})$,\ \ 
  then\ \ 
  $\begin{bmatrix}a&c\\b&d\end{bmatrix}\in \Mast\alpha\beta$.}
  \end{equation}
  We will proceed by induction on $n$.

  First let $n=0$ and choose any
$\begin{bmatrix}\ol{a}&\ol{c}\\\ol{b}&\ol{d}\end{bmatrix}\in
  M_0(\ol{\alpha},\ol{\beta})=\ol{M_0(\alpha,\beta)}$.
There must exist
  $\begin{bmatrix}a_0& c_0\\ b_0& d_0\end{bmatrix}\in M_0(\alpha,\beta)$
  such that $\ol{a_0}=\ol{a}$, $\ol{b_0}=\ol{b}$, $\ol{c_0}=\ol{c}$,
  and $\ol{d_0}=\ol{d}$.
  Thus,
  $\begin{bmatrix}a_0& c_0\\ b_0& d_0\end{bmatrix}\in \Mast\alpha\beta$
  and
  $a_0\stackrel{[\alpha,\beta]}{\equiv}a$,
  $b_0\stackrel{[\alpha,\beta]}{\equiv}b$,
  $c_0\stackrel{[\alpha,\beta]}{\equiv}c$,
  $d_0\stackrel{[\alpha,\beta]}{\equiv}d$.
Since $\Mast\alpha\beta$ is $[\alpha,\beta]$-saturated  
(Theorem~\ref{thm:arbitrary}~(2)) we get that
  $\begin{bmatrix}a&c\\b&d\end{bmatrix}\in \Mast\alpha\beta$,
    proving \eqref{eq:induction2} for $n=0$.

Next, consider the case when $n$ is odd, and assume
that \eqref{eq:induction2} holds for $n-1$ in place of $n$.   
By definition, if  
$\begin{bmatrix}\ol{a}&\ol{c}\\\ol{b}&\ol{d}\end{bmatrix}\in
M_n(\ol{\alpha},\ol{\beta})$, then
$\ol{\m a}$ has elements $\ol{r},\ol{s}$ ($r,s\in A$)
such that
$\begin{bmatrix}\ol{a}&\ol{r}\\\ol{b}&\ol{s}\end{bmatrix},
\begin{bmatrix}\ol{r}&\ol{c}\\\ol{s}&\ol{d}\end{bmatrix}\in
M_{n-1}(\ol{\alpha},\ol{\beta})$.
The induction hypothesis yields that
$\begin{bmatrix} a & r\\ b & s\end{bmatrix}, 
\begin{bmatrix} r & c \\ s & d \end{bmatrix}\in \Mast\alpha\beta$.
Thus,
$\begin{bmatrix}a&c\\b&d\end{bmatrix}\in \Mast\alpha\beta$,
proving \eqref{eq:induction2} when $n$ is odd.

The case when $n>0$ is even can be handled the same way,
so the proof of
\eqref{eq:induction2} is complete.
This finishes the proof that (a)~$\Leftrightarrow$~(c).

It remains to show that (b)~$\Leftrightarrow$~(c).
Since $R(\alpha,\beta)$ is $(\alpha\cap\beta)$-saturated and
$[\alpha,\beta]\le(\alpha\cap\beta)$,
$R(\alpha,\beta)$ is $[\alpha,\beta]$-saturated.
Therefore, Condition~(b) is equivalent (after factoring by $[\alpha,\beta]$) to
\begin{enumerate}
\item[($\ol{\textrm{b}}$)]
  $\begin{bmatrix}\ol{a}&\ol{c}\\ \ol{b}&\ol{d}\end{bmatrix}\in
  R(\ol{\alpha},\ol{\beta})$
  and $q(\ol{a},\ol{b},\ol{c},\ol{d})=\ol{c}$.
\end{enumerate}  
Since $[\ol{\alpha},\ol{\beta}]=0$, 
Corollary~\ref{cor:Delta-q-p}~(2)
proves that
Condition~($\ol{\textrm{b}}$) is equivalent to
$\begin{bmatrix}\ol{a}&\ol{c}\\ \ol{b}&\ol{d}\end{bmatrix}\in
\Mast{\ol{\alpha}}{\ol{\beta}}$, which is Condition (c).
This finishes the proof of Item~(1).

To prove Item~(2) assume that
$\begin{bmatrix}a&c\\b&d
\end{bmatrix} \in R(\alpha,\beta)$ and
$q(a,b,c,d)=c'$. Factoring by $[\alpha,\beta]$
yields
$\begin{bmatrix}\ol{a}&\ol{c}\\ \ol{b}&\ol{d}\end{bmatrix}\in
R(\ol{\alpha},\ol{\beta})$
and $q(\ol{a},\ol{b},\ol{c},\ol{d})=\ol{c}'$,
but in this quotient $[\ol{\alpha},\ol{\beta}]=0$.
By Lemma~\ref{lm:crucial},
$\begin{bmatrix}\ol{a}&\ol{c}'\\ \ol{b}&\ol{d}\end{bmatrix}\in
\Mast{\ol{\alpha}}{\ol{\beta}}$.
Using the equivalence (a)~$\Leftrightarrow$~(c) from
the proof of Item~(1), we get
$\begin{bmatrix}a&c'\\b&d
\end{bmatrix} \in \Mast\alpha\beta$.

For Item~\eqref{quotient:it4}, we have $\subseteq$ from
Lemma~\ref{thm:arbitrary}~(1) and
 we need to prove 
the opposite inclusion.
Assume $\begin{bmatrix}a&a\\a&b\end{bmatrix}\in\Mast\alpha\beta$.
Since $\Mast\alpha\beta$ is closed under interchanging rows,
we also have that
$\begin{bmatrix}a&b\\a&a\end{bmatrix}\in\Mast\alpha\beta$. 
Now the description of $\Mast\alpha\beta$ in Item~(1) implies that
$\begin{bmatrix}a&b\\a&a\end{bmatrix}\in R(\alpha,\beta)$
and $q(a,a,b,a)\stackrel{[\alpha,\beta]}{\equiv}b$.
Since $\begin{bmatrix}a&a\\a&a\end{bmatrix}\in\Mast\alpha\beta$,
properties (I)$_q$ and (II)$_q$ of $q$ force
$a=q(a,a,a,a)\stackrel{[\alpha,\beta]}{\equiv}q(a,a,b,a)$.
Thus, $a\stackrel{[\alpha,\beta]}{\equiv}b$, which completes the
proof of equality in Item~\eqref{quotient:it4}.
Note that this also proves the final statement in Item~\eqref{quotient:it4}. 
\end{proof}

\begin{rem}
  
1. Ralph Freese and Ralph McKenzie 
\cite[Theorem~4.9]{fremck1987} proved our
Theorem~\ref{thm:quotient}\eqref{quotient:it4}
in the restricted setting of congruence modular
varieties,
but with $\Mast\alpha\beta$ replaced by $\Delta_{\alpha,\beta}$
(see Section~\ref{sec:summary}).  It can be shown that
$\Delta_{\alpha,\beta}=\Mast\alpha\beta$ in any congruence modular variety,
so our Theorem~\ref{thm:quotient}~\eqref{quotient:it4}
can be viewed as an extension of their result
to varieties with a difference term.

\smallskip
2.
Similarly, Kiss 
\cite[Theorem~3.8 (ii)]{kiss1992} proved our 
Theorem~\ref{thm:quotient} Items~(1) and~(2) in the congruence modular setting,
but with $\Mast\alpha\beta$ replaced by $\Delta_{\alpha,\beta}$.

\smallskip
3.  Our Theorem~\ref{thm:quotient}~\eqref{quotient:it4} can
be deduced from
a related result of Moorhead.  In \cite{moor2021}, Moorhead defines the
``hypercommutator'' $[\alpha,\beta]_H$
of congruences $\alpha$ and $\beta$ to be the smallest congruence 
$\delta$ satisfying the implication $(a,c) \in \delta \implies (b,d) \in 
\delta$ for all $\begin{bmatrix}a&c\\b&d\end{bmatrix} \in \Mast\alpha\beta$.
In \cite[Proposition~3.5]{moor2021}, Moorhead
shows that if $\alpha,\beta$ are congruences of any algebra whatsoever, then 
$[\alpha,\beta]_H$ is equal to the set on the right-hand side of the
equation in the statement of our
Theorem~\ref{thm:quotient}~\eqref{quotient:it4}.
Since $[\alpha,\beta]_H$ is always sandwiched between the usual commutator
$[\alpha,\beta]$ and the linear commutator $[\alpha,\beta]_\ell$
(see \cite{ks1998} for the definition of $[\alpha,\beta]_\ell$),
and since $[\alpha,\beta]=[\alpha,\beta]_\ell$ in varieties with a difference
term by \cite[Lemma~2.2]{kea1995} and \cite[Corollary~4.5]{ks1998}, 
our Theorem~\ref{thm:quotient}~\eqref{quotient:it4} can be viewed as
the specialization of Moorhead's
\cite[Proposition~3.5]{moor2021} to algebras in varieties with a difference
term.
\end{rem}

We can use Theorem~\ref{thm:quotient} to obtain the following
result, which was previously obtained by Moorhead
\cite[Theorem~5.2]{moor2021} in the special
case that $\alpha=\beta$.

\begin{cor} \label{thm:sdmeet}
Let $\varV$ be a congruence meet-semidistributive variety.
If $\m a\in\varV$ and
$\alpha,\beta\in\Con(\m a)$, then $R(\alpha,\beta)=\Mast\alpha\beta$.
\end{cor}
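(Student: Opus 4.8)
The plan is to read this off from Theorem~\ref{thm:quotient}. The first step is to recognize that a congruence meet-semidistributive variety $\varV$ has a Kiss term, namely the third projection $q(x,y,z,w)=z$; indeed, as noted in the Introduction, (I)${}_q$ is trivial for this $q$, and (II)${}_q$ holds because congruence meet-semidistributivity forces the commutator to coincide with intersection, so that whenever $(a,b,c,d),(a,b,c',d)\in R(\alpha,\beta)$ we have $(c,c')\in\alpha\cap\beta=[\alpha,\beta]$, i.e.\ $q(a,b,c,d)=c\stackrel{[\alpha,\beta]}{\equiv}c'=q(a,b,c',d)$. Hence $\varV$ satisfies the hypothesis of Theorem~\ref{thm:quotient}.

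Next I would fix $\m a\in\varV$ and $\alpha,\beta\in\Con(\m a)$ and apply Theorem~\ref{thm:quotient}(1) with this projection Kiss term. Since $q(a,b,c,d)=c$ identically, the condition ``$q(a,b,c,d)\stackrel{[\alpha,\beta]}{\equiv}c$'' in the displayed description of $\Mast\alpha\beta$ there reduces to ``$c\stackrel{[\alpha,\beta]}{\equiv}c$'', which always holds; so the right-hand side of that description is all of $R(\alpha,\beta)$, giving $\Mast\alpha\beta=R(\alpha,\beta)$. One could equally use Theorem~\ref{thm:quotient}(2): taking $c'=q(a,b,c,d)=c$ yields $\begin{bmatrix}a&c\\b&d\end{bmatrix}\in\Mast\alpha\beta$ for every $\begin{bmatrix}a&c\\b&d\end{bmatrix}\in R(\alpha,\beta)$, while the inclusion $\Mast\alpha\beta\subseteq R(\alpha,\beta)$ is part of Definition~\ref{df:delta}.

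I expect no genuine obstacle here: the entire difficulty has already been dispatched in the proof of Theorem~\ref{thm:quotient} (and, through it, in Lemma~\ref{lm:crucial} and Corollary~\ref{cor:Delta-q-p}). The only point needing attention is the preliminary one --- that $z$ really is a Kiss term for congruence meet-semidistributive varieties --- and this rests on the standard fact that such varieties satisfy $[\alpha,\beta]=\alpha\cap\beta$ for all congruences.
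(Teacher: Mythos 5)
Your proof is correct and follows essentially the same route as the paper: both read the result off Theorem~\ref{thm:quotient}(1) using the projection Kiss term $q(x,y,z,w)=z$, noting that the defining condition ``$q(a,b,c,d)\stackrel{[\alpha,\beta]}{\equiv}c$'' becomes vacuous. Your preliminary verification that $z$ is a Kiss term (via $[\alpha,\beta]=\alpha\cap\beta$ in congruence meet-semidistributive varieties) is a correct spelling-out of what the paper merely asserts in the Introduction.
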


\begin{proof}
If $\varV$ is congruence meet-semidistributive, then
$q(x,y,z,w)=z$ is a Kiss term for $\varV$. Using this
and the fact that $[\alpha,\beta]$ is a reflexive relation,
the statement 
of Theorem~\ref{thm:quotient}~(1)
reduces to $R(\alpha,\beta)=\Mast\alpha\beta$.
\end{proof}

In closing, we note that Kiss's proof of Theorem~\ref{thm:main} 
in the congruence
modular case was relatively short and used standard properties of congruences
and the commutator operation in congruence modular varieties.  By contrast,
our proof of Theorem~\ref{thm:main} is long, complicated and syntactic.

\medskip\noindent\textsc{Problem.}  
Develop the theory of congruences and
commutators in varieties with a difference term, sufficient to support
a short, direct proof of our Theorem~\ref{thm:main}.

\bibliography{kisstermbib}

\begin{thebibliography}{13}

\bibitem{DemFreVal2019}
{\sc W.~DeMeo, R.~Freese, and M.~Valeriote}, {\em Polynomial-time tests for
  difference terms in idempotent varieties}, Internat. J. Algebra Comput., 29
  (2019), pp.~927--949.


\bibitem{fremck1987}
{\sc R.~Freese and R.~McKenzie}, {\em Commutator theory for congruence modular
  varieties}, vol.~125 of London Mathematical Society Lecture Note Series,
  Cambridge University Press, Cambridge, 1987.


\bibitem{gumm1980}
{\sc H.-P. Gumm}, {\em An easy way to the commutator in modular varieties},
  Arch. Math. (Basel), 34 (1980), pp.~220--228.


\bibitem{janped2001}
{\sc G.~Janelidze and M.~C. Pedicchio}, {\em Pseudogroupoids and commutators},
  Theory Appl. Categ., 8 (2001), pp.~No. 15, 408--456.

\bibitem{kea1995}
{\sc K.~A. Kearnes}, {\em Varieties with a difference term}, J. Algebra, 177
  (1995), pp.~926--960.

\bibitem{kearnes-kiss2013}
{\sc K.~A. Kearnes and E.~W. Kiss}, {\em The shape of congruence lattices},
  Mem. Amer. Math. Soc., 222 (2013), pp.~viii+169.

\bibitem{ks1998}
{\sc K.~A. Kearnes and \'{A}.~Szendrei}, {\em The relationship between two
  commutators}, Internat. J. Algebra Comput., 8 (1998), pp.~497--531.

\bibitem{ksw2016}
{\sc K.~Kearnes, \'{A}.~Szendrei, and R.~Willard}, {\em A finite basis theorem for
  difference-term varieties with a finite residual bound}, Trans. Amer. Math.
  Soc., 368 (2016), pp.~2115--2143.

\bibitem{kiss1992}
{\sc E.~W. Kiss}, {\em Three remarks on the modular commutator}, Algebra
  Universalis, 29 (1992), pp.~455--476.

\bibitem{lip1994}
{\sc P.~Lipparini}, {\em Commutator theory without join-distributivity}, Trans.
  Amer. Math. Soc., 346 (1994), pp.~177--202.

\bibitem{lip1996}
\leavevmode\vrule height 2pt depth -1.6pt width 23pt, 
{\em A characterization
  of varieties with a difference term}, Canad. Math. Bull., 39 (1996),
  pp.~308--315.


\bibitem{lip1999}
\leavevmode\vrule height 2pt depth -1.6pt width 23pt, 
{\em A {K}iss {$4$}-difference term from a ternary term},
  Algebra Universalis, 42 (1999), pp.~153--154.

\bibitem{moor2021}
{\sc A.~Moorhead}, {\em Supernilpotent {T}aylor algebras are nilpotent}, Trans.
  Amer. Math. Soc., 374 (2021), pp.~1229--1276.


\end{thebibliography}
\end{document}